\numberwithin{equation}{section}
\newtheorem{theorem}{Theorem}[section]
\newtheorem{assumption}[theorem]{Assumption}
\newtheorem{lemma}[theorem]{Lemma}
\newtheorem{proposition}[theorem]{Proposition}
\newtheorem{remark}[theorem]{Remark}
\newenvironment{proof}[1][Proof]{\textbf{#1.} }{\ \rule{0.5em}{0.5em}}
\title{An approximation scheme for variational inequalities with convex and coercive Hamiltonians}
\author{Shuo Huang\thanks{Department of Statistics, The University of Warwick, Coventry CV4
7AL, U.K. \texttt{s.huang.13@warwick.ac.uk} }}
\date{}
\begin{document}
\maketitle
\begin{abstract}
We propose an approximation scheme for a class of semilinear variational inequalities whose Hamiltonian is convex and coercive. The proposed scheme is a natural extension of a previous splitting scheme proposed by Liang, Zariphopoulou and the author for semilinear parabolic PDEs. 
We establish the convergence of the scheme and determine the convergence rate by obtaining its error bounds. The bounds are obtained by Krylov's shaking coefficients technique and Barles-Jakobsen's optimal switching approximation, in which a key step is to introduce a variant switching system.
\\

\noindent\textit{Keywords}: Splitting, \and viscosity solutions, \and shaking coefficients technique,
\and optimal switching approximation, \and variant switching system.\\

\end{abstract}


\section{Introduction}
This paper is an extension of a previous work started by Liang, Zariphopoulou and the author \cite{Huang}, in which they consider semilinear parabolic PDEs with convex and coercive Hamiltonians, and propose an approximation based on splitting the equation into a linear parabolic equation and a Hamilton-Jacobi equation. By the convexity property of Hamiltonians, the semilinear parabolic PDEs they considered can 
be written as HJB type of parabolic equations, which correspond to stochastic optimal control problems. Herein, we use the same setting and extend their work by treating optimal stopping as well as optimal control at the same time. This leads to obstacle problems with associated variational inequalities. To be more specific, we consider semilinear parabolic variational inequalities of the form
\begin{align}
\max\{-\partial_tu+g(t,x,\partial_{x}u,\partial_{xx}u),u-f(t,x)\}&=0 & \text{in}\ Q_T;\label{PDE_1}\\
u(T,x)&=U(x) & \text{in}\ \mathbb{R}^n,\label{terminal}
\end{align}
where
$$g(t,x,p,X):=-\frac{1}{2}\text{tr}\left(\sigma\sigma^T(t,x)X\right)-b(t,x)\cdot p+H(t,x,p),$$
and $Q_T=[0,T)\times\mathbb{R}^n$. A key feature is that the
Hamiltonian $H(t,x,p)$ is convex and coercive in $p$. 
In particular, this covers the case that $H$ has quadratic growth in $p$,
a case that corresponds to a rich class of equations in mathematical
finance arising, for example, in optimal investment with homothetic
risk preferences (\cite{Hu}), exponential indifference valuation
(\cite{CSYY, HL, HL1}) and entropic risk measures (\cite{CHLZ}), just to
name a few. 
Note that if $u<f$ in $Q_{T}$, equation (\ref{PDE_1}) reduces exactly to the semilinear parabolic PDE considered in \cite{Huang}:
\begin{equation}\label{previousPDE}
-\partial_tu+g(t,x,\partial_{x}u,\partial_{xx}u)=0 \ \ \ \text{in}\ Q_T.
\end{equation}
%


Herein, we contribute to proposing an approximation scheme for variational inequalities of the type (\ref{PDE_1}) using an approximation of (\ref{previousPDE}) introduced in \cite{Huang}. The key idea is to use in an
essential way the \textit{convexity} of the Hamiltonian. To the best of our knowledge, this property has not been adequately exploited in the existing
approximation studies. The extension from the scheme for (\ref{previousPDE}) to our scheme for (\ref{PDE_1}) is natural. Suppose $S$ denotes the approximation of (\ref{previousPDE}), we propose an approximation scheme for (\ref{PDE_1}):
$$\max\left\{S(\Delta, t,x,u^\Delta(t,x),u^\Delta(t+\Delta,\cdot)), u^\Delta(t,x)-f(t,x)\right\}=0,$$
where $\Delta$ is the discretisation time step and $u^{\Delta}$ is the solution of the scheme used to approximate the (viscosity) solution of (\ref{PDE_1}). We will formally introduce the approximation scheme in section \ref{sec-model}; see (\ref{semigroupequation1}) and
(\ref{semischeme}) for details. We also refer to \cite{Huang} for the construction and intuition of the approximation $S$ of (\ref{previousPDE}).

Next, we establish the convergence of the scheme solution to the unique (viscosity)
solution of (\ref{PDE_1})-(\ref{terminal}) and determine the rate of convergence. We
do this by obtaining upper and lower bounds on the approximation
error (Theorems \ref{theorem_error_1} and \ref{theorem_error_2},
respectively). The main tools  come from the \emph{shaking
coefficients technique} introduced by Krylov \cite{Krylov1}
\cite{Krylov} and the \emph{optimal switching approximation}
introduced by Barles and Jakobsen \cite{BJ0} \cite{BJ}.

While various arguments follow from adaptations of these techniques,
a main difficulty is to derive the consistency error estimate. Fortunately, thanks to the previous work of \cite{Huang} (Proposition 2.5, 2.8 therein), the consistency estimate follows immediately herein. Using this estimate and the comparison result for the approximation
scheme (Proposition \ref{schemecomparison}), we in turn derive an
upper bound for the approximation error by perturbing the
coefficients of (\ref{PDE_1}). 

The lower bound for the approximation
error is obtained by another layer of approximation of (\ref{PDE_1}) via an auxiliary optimal switching system. Barles and Jakobsen \cite{BJ0} \cite{BJ} in their paper use standard optimal switching systems to approximate standard HJB type of equations. However, the variational inequality (\ref{PDE_1}) considered herein can not be written as a standard HJB equation but an HJB equation with obstacle. Thus, we modify the standard optimal switching system and introduce a {variant type of switching system}, which can be proved to approximate the HJB equation with obstacle. To the best of our knowledge, we are the first to introduce this \emph{variant switching system} in the existing literature. We give the well-posedness, regularity as well as continuous dependence result for this \emph{variant switching system} in Section \ref{App2}.

A highly related work to this paper is Jakobsen \cite{Jakobsen}, where he obtained error bounds for general monotone approximation schemes for Bellman equations arising in a stochastic optimal stopping and control problem. Due to the convex and coercive property of the Hamiltonian $H(t,x,p)$, (\ref{PDE_1}) can be written as the same type of Bellman equation, but with control set and coefficients unbounded. This is not the case in \cite{Jakobsen}. Furthermore, he used the same \emph{shaking
coefficients technique} to derive an error bound for one side, but for the other side he interchanged the roles of the approximation scheme and the original equation, based on an additional assumption (Assumption 2.5 therein) that the scheme solution has enough regularity. Unfortunately, our proposed scheme does not satisfy this assumption and thus, we need another layer of approximation of the original equation, which decreases the convergence rate. Finally, a common point between his work and ours is that the scheme solution $u^{\Delta}$ is defined at every point in $\bar{Q}_{T}$ rather than some certain time (and space) grids.


The paper is organized as follows. In the next section we introduce the
approximation scheme for (\ref{PDE_1})-(\ref{terminal}). In Section 3, we prove its convergence and establish the convergence rate
by obtaining the upper and lower bounds for the approximation error, which are the main results of this paper. Section \ref{App2} is devoted to introducing the variant switching systems. We conclude in Section 5. Some technical proofs are
provided in the appendix.

\section{The approximation scheme for (\ref{PDE_1})} \label{sec-model}
Let $d\in\mathbb{Z}^{+}$ and $\delta>0$. For a function $f:Q_T\to
\mathbb{R}^{d}$, we introduce its (semi)norms
$$|f|_{0}:=\sup_{(t,x)\in Q_T}|f(t,x)|,$$
$$[f]_{1,\delta}:=\sup_{\substack{ (t,x),(t',x)\in Q_T \\t\neq t'}}\frac{|f(t,x)-f(t',x)|}{|t-t'|^{\delta}},\ \ \
[f]_{2,\delta}:=\sup_{\substack{ (t,x),(t,x')\in Q_T \\ x\neq x'
}}\frac{|f(t,x)-f(t,x')|}{|x-x'|^{\delta}}.$$ Furthermore,
$[f]_{\delta}:=[f]_{1,\delta/2}+[f]_{2,\delta}$ and
$|f|_{\delta}:=|f|_{0}+[f]_{\delta}$. Similarly, the (semi)norms of
a function $g:\mathbb{R}^n\to\mathbb{R}^{d}$ are defined as
$$|g|_{0}:=\sup_{x\in \mathbb{R}^n}|g(x)|,\ \ \ [g]_{\delta}:=\sup_{\substack{ x,x'\in \mathbb{R}^n \\ x\neq x' }}\frac{|g(x)-g(x')|}{|x-x'|^{\delta}},\ \ \ |g|_{\delta}:=|g|_{0}+[g]_{\delta}.$$

For $S={Q}_T$, $\mathbb{R}^n$ or {$Q_{T}\times \mathbb{R}^{n}$}, we
denote by $\mathcal{C}(S)$ the space of  continuous real-valued functions on $S$,
and by $\mathcal{C}_b^\delta(S)$ the space of bounded and continuous real-valued 
functions on $S$ with finite norm $|f|_{\delta}$, for any $\delta\ge 0$. Furthermore, we
set $\mathcal{C}_b^0(S)\equiv\mathcal{C}_b(S)$ and also denote by
$\mathcal{C}_b^{\infty}(S)$ the space of smooth real-valued functions on $S$
with bounded derivatives of any order.

Throughout this paper we assume the following conditions for equations
(\ref{PDE_1})-(\ref{terminal}).


\begin{assumption}\label{data assumption}
(i) The $n\times d$ matrix-valued diffusion coefficient $\sigma$,
the $\mathbb{R}^{n}$-valued drift coefficient $b$, the real-valued obstacle $f$ and terminal
datum $U$ have finite norms
$|\sigma|_{1}, |b|_{1}, |f|_{1}, |U|_{1}\le M$ for some $M>0$. Moreover, 
$f(T,\cdot)\ge U$ in $\mathbb{R}^{n}$.

(ii) The Hamiltonian $H(t,x,p)\in \mathcal{C}(Q_{T}\times\mathbb{R}^n)$ is 
convex in p, and satisfies the coercive condition
$$\lim_{|p|\rightarrow\infty}\frac{H(t,x,p)}{|p|}=+\infty,$$
uniformly in $(t,x)\in Q_{T}$. Moreover,
for every $p$, $[H(\cdot,\cdot,p)]_{1}\le M$ for the constant $M$ in (i), and there exist two
locally bounded functions $H^{*}$ and
$H_{*}:\mathbb{R}^{n}\to\mathbb{R}$ such that
$$H_{*}(p)=\inf_{(t,x)\in Q_{T}}H(t,x,p), \ \ \  H^{*}(p)=\sup_{(t,x)\in Q_{T}}H(t,x,p).$$
\end{assumption}

Unless state otherwise, we will then throughout this paper denote by $C:=C(T,M)$ some constant that depends only on $T$ and $M$. Then under the above assumptions, we have the following existence,
uniqueness and regularity results for equation (\ref{PDE_1})-(\ref{terminal}). Their
proofs are provided in Appendix A.

\begin{proposition}\label{solutionproperty} Suppose that Assumption \ref{data assumption} is
satisfied. Then, there exists a unique viscosity solution
$u\in\mathcal{C}_b^1(\bar{Q}_T)$ of
(\ref{PDE_1})-(\ref{terminal}), with $|u|_{1}\le C$.
\end{proposition}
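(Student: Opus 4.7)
My plan is to prove Proposition~\ref{solutionproperty} by realising $u$ as the value function of a combined stochastic control and optimal stopping problem, then reading off regularity and uniqueness from standard arguments adapted to this setting. Since $H(t,x,\cdot)$ is convex and coercive, the Legendre--Fenchel transform gives
$$H(t,x,p)=\sup_{\alpha\in\mathbb{R}^{n}}\{\alpha\cdot p-L(t,x,\alpha)\},$$
where $L$ is superlinear in $\alpha$ and, thanks to $[H(\cdot,\cdot,p)]_{1}\le M$, inherits spatial-Lipschitz and temporal $1/2$-H\"older regularity in $(t,x)$ uniformly in $\alpha$. Substituting into $g$ recasts (\ref{PDE_1}) as the dynamic programming equation
$$\max\Bigl\{-\partial_{t}u-\inf_{\alpha}\bigl[\mathcal{L}^{\alpha}u+L(t,x,\alpha)\bigr],\,u-f\Bigr\}=0,$$
with $\mathcal{L}^{\alpha}u=\tfrac{1}{2}\text{tr}(\sigma\sigma^{T}D^{2}u)+(b-\alpha)\cdot Du$, associated with the candidate value
$$u(t,x)=\inf_{\alpha,\tau}\mathbb{E}^{t,x}\!\left[\int_{t}^{\tau}L(s,X_{s},\alpha_{s})\,ds+f(\tau,X_{\tau})\mathbf{1}_{\{\tau<T\}}+U(X_{T})\mathbf{1}_{\{\tau=T\}}\right],$$
where $X$ solves $dX_{s}=(b(s,X_{s})-\alpha_{s})ds+\sigma(s,X_{s})dW_{s}$.

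Using the admissible choice $\alpha\equiv0$, $\tau=T$ together with the pointwise lower bound $L(t,x,\alpha)\ge -H(t,x,0)\ge -H^{*}(0)$, I would first derive the two-sided bound $|u|_{0}\le C(T,M)$ and simultaneously show that in the infimum one may restrict to controls for which $\mathbb{E}\int_{t}^{T}L(s,X_{s},\alpha_{s})\,ds$ is uniformly bounded; this is where coercivity of $H$ (equivalently superlinear growth of $L$ in $\alpha$) plays its role. Lipschitz regularity in $x$ then follows by fixing a near-optimal pair $(\alpha,\tau)$ for $u(t,x)$, using the same pair as a competitor for $u(t,x')$, and applying the standard Gronwall estimate $\mathbb{E}\sup_{s\le T}|X_{s}^{t,x}-X_{s}^{t,x'}|^{2}\le C|x-x'|^{2}$ together with spatial Lipschitz of $L(\cdot,\cdot,\alpha)$, $f$ and $U$. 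For the $1/2$-H\"older bound in time ($t<t'$) I would take $(\alpha,\tau\vee t')$ as a competitor for $u(t',x)$ and estimate the discrepancy via $\mathbb{E}|X_{t'}^{t,x}-x|^{2}\le C(t'-t)$ combined with the assumed $1/2$-H\"older regularity in $t$ of $f$, $U$, $L$; the compatibility $f(T,\cdot)\ge U$ ensures $u(T,\cdot)=U$ and controls the terminal boundary layer.

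Checking that the $u$ so defined is a viscosity solution of (\ref{PDE_1})--(\ref{terminal}) is then a dynamic-programming-principle argument, combined with It\^o's formula applied to smooth test functions on the continuation set $\{u<f\}$. Uniqueness rests on a comparison principle proved by the standard doubling-of-variables method: on the contact region $\{u=f\}$ the obstacle directly yields the bound, and on $\{u<f\}$ the semilinear parabolic nonlinearity $g$ satisfies the usual structural condition (Lipschitz in $(x,p,X)$ on sets bounded in $p$ and $X$), so the classical viscosity argument goes through.

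The single genuine obstacle is that the control set is all of $\mathbb{R}^{n}$: both dynamic programming and comparison are textbook only for bounded controls, while here $\alpha$ is merely penalised superlinearly through $L$. I would handle this via truncation $|\alpha|\le R$: for each $R$ the truncated value $u_{R}$ falls under standard theory, and the coercivity of $L$ together with the uniform bound on near-optimal $\mathbb{E}\int L\,ds$ yields $u_{R}\downarrow u$ as $R\to\infty$ with $|u_{R}|_{1}\le C$ uniform in $R$, so the limit inherits existence, the regularity estimate $|u|_{1}\le C$, and uniqueness within $\mathcal{C}_{b}^{1}(\bar{Q}_{T})$.
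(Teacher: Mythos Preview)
Your overall strategy coincides with the paper's: rewrite $H$ via its Legendre dual $L$, realise $u$ as the value of a combined control--stopping problem, obtain the bound $|u|_{0}\le C$ from the choice $\alpha\equiv0,\ \tau=T$ together with $L\ge -H^{*}(0)$, and deduce uniqueness from a comparison/continuous-dependence principle. On these points your proposal and the paper are essentially the same.

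The approaches diverge on regularity. The paper does \emph{not} use probabilistic flow estimates; instead it proves a continuous-dependence lemma (Lemma~\ref{Ptbedproperty}) by doubling variables and reads off $x$-Lipschitz by translating the coefficients, then obtains $t$-H\"older by a purely PDE barrier argument: mollify $u(s,\cdot)$ to $U^{\varepsilon'}$, observe $|D^{2}_{x}U^{\varepsilon'}|\le C/\varepsilon'$, and check that $U^{\varepsilon'}(x)\pm C(\varepsilon'^{-1}+1)(s-t)$ are super/subsolutions on $Q_{s}$, optimising $\varepsilon'=\sqrt{s-t}$. This route is insensitive to the size of the control set.

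Your probabilistic $x$-Lipschitz argument is fine (the control $\alpha$ cancels in the difference of the two flows, so Gronwall gives a constant independent of $R$). The $t$-H\"older step, however, has a gap. When a near-optimal strategy stops at $\tau<t'$, bounding $u(t',x)-u(t,x)$ forces you to estimate $\mathbb{E}|X_{\tau}-x|$, hence $\mathbb{E}\int_{t}^{\tau}|\alpha_{s}|\,ds$. Superlinearity of $L$ and the a priori bound on $\mathbb{E}\int L\,ds$ only give this quantity bounded, not $O(\sqrt{t'-t})$; under the truncation $|\alpha|\le R$ you get $\mathbb{E}\int_{t}^{\tau}|\alpha|\,ds\le R(t'-t)$, so the resulting H\"older constant depends on $R$, contradicting your claim that $|u_{R}|_{1}\le C$ uniformly. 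The claim is in fact true, but its justification is PDE-based (continuous dependence plus barriers, exactly as the paper does), not the direct competitor argument you sketch. If you want to keep the probabilistic flavour, one clean fix is: obtain $[u_{R}(s,\cdot)]_{1}\le C$ uniformly in $R$ by your flow argument, and then derive $t$-H\"older from $x$-Lipschitz via the barrier construction above, which uses only $[u_{R}(s,\cdot)]_{1}$ and the data bounds and is therefore $R$-independent.
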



\subsection{The backward operator $\mathbf{S}_t(\Delta)$}

Before introducing the approximation scheme, we introduce a \emph{backward operator}
$\mathbf{S}_{t}(\Delta)$, which is defined in \cite{Huang}. Herein, for the reader's convenience, we repeat the definition. To this end, using the convexity and coerciveness of the Hamiltonian $H(t,x,p)$, we define its Legendre transform  $L: Q_{T}\times\mathbb{R}^{n}\to\mathbb{R}$ by
\begin{equation}\label{L}
L(t,x,q):=\sup_{p\in\mathbb{R}^n}\{p\cdot q-H(t,x,p)\}.
\end{equation}

Next, for any $t$ and $\Delta$ such that $0\leq t<t+\Delta \leq T$ and any $\phi\in\mathcal{C}_b(\mathbb{R}^{n})$, the \emph{backward operator}
$\mathbf{S}_{t}(\Delta):\mathcal{C}_b(\mathbb{R}^{n})\to\mathcal{C}_b(\mathbb{R}^{n})$ is defined by
\begin{eqnarray}\label{semigroupequation1}
\left\{
\begin{array}{ll}
\displaystyle \mathbf{S}_{t}(\Delta)\phi(x)=\min_{y\in\mathbb{R}^n} \left\{\Delta L\left(t,x,\frac{x-y}{\Delta}\right)+\mathbf{E}[\phi(Y_{t+\Delta}^{t,y})|\mathcal{F}_t]\right\},& x\in\mathbb{R}^{n}\\
\displaystyle Y_{s}^{t,y}=y+ b(t,y)(s-t)+\sigma(t,y)(W_{s}-W_t),& s\in[t,t+\Delta],
\end{array}
\right.
\end{eqnarray}
on a filtered probability space
$(\Omega,\mathcal{F},\{\mathcal{F}_t\}_{t\geq 0},\mathbf{P})$, where
$W$ is an $d$-dimensional Brownian motion with its augmented
filtration $\{\mathcal{F}_t\}_{t\geq 0}$.\\

Note that in the definition of the operator $\mathbf{S}_t(\Delta)$, it is implied that for any $\phi\in\mathcal{C}_b(\mathbb{R}^{n})$, there always exists an associated minimizer $y^*$ and $\mathbf{S}_{t}(\Delta)\phi$ is also in $\mathcal{C}_b(\mathbb{R}^{n})$. These along with other key properties of the backward operator $\mathbf{S}_{t}(\Delta)$ are proved in \cite{Huang}.

\subsection{The approximation scheme}\label{sebsection: splitting}

We are now in a position to introduce the approximation scheme for the variational inequality (\ref{PDE_1})-(\ref{terminal}). This approximation scheme is a natural extension of the scheme (2.11) in \cite{Huang}: For $\Delta\in(0,T)$ and $(t,x)\in\bar{Q}_{T-\Delta}$, we introduce the iterative
algorithm
\begin{equation}\label{splitting}
u^{\Delta}(t,x)=\min\left\{\mathbf{S}_{t}(\Delta)u^{\Delta}(t+\Delta,\cdot)(x),f(t,x)\right\}
\end{equation}
with $u^{\Delta}(T,\cdot)=U(\cdot)$ and $\mathbf{S}_t(\Delta)$
defined in (\ref{semigroupequation1}). The values between $T-\Delta$
and $T$ are obtained by a standard linear interpolation.

Specifically, the approximation scheme is given by
\begin{eqnarray}\label{semischeme}
\left\{
\begin{array}{ll}
\displaystyle \bar{S}(\Delta, t,x,u^\Delta(t,x),u^\Delta(t+\Delta,\cdot))=0 &\text{in}\  \bar{Q}_{T-\Delta},\\
\displaystyle u^\Delta(t,x)=g^\Delta(t,x)&\text{in}\
\bar{Q}_{T}\backslash\bar{Q}_{T-\Delta},
\end{array}
\right.
\end{eqnarray}
where
$\bar{S}:(0,T]\times\bar{Q}_{T-\Delta}\times\mathbb{R}\times\mathcal{C}_b(\mathbb{R}^n)\rightarrow\mathbb{R}$,
and $g^\Delta:\bar{Q}_{T}\backslash \bar{Q}_{T-\Delta}\to\mathbb{R}$
are defined respectively by
\begin{equation}\label{Sbar1}
\bar{S}(\Delta,t,x,p,v)=\max\{S(\Delta,t,x,p,v),p-f(t,x)\},
\end{equation}
\begin{equation}\label{Sbar2}
S(\Delta,t,x,p,v)=\frac{p-\mathbf{S}_{t}(\Delta)v(x)}{\Delta},
\end{equation}and
\begin{equation}\label{gdelta}
g^\Delta(t,x)=\omega_1(t)U(x)+\omega_2(t)\min\{\mathbf{S}_{T-\Delta}(\Delta)U(x),f(T-\Delta,x)\},
\end{equation}
with $\omega_1(t)=(t+\Delta-T)/\Delta$ and
$\omega_2(t)=(T-t)/\Delta$ being the linear interpolation weights.

Note that when $T-\Delta<t\le T$, the approximate term
$g^\Delta$ corresponds to the usual linear interpolation between
$T-\Delta$ and $T$. 

The next proposition shows the well-posedness of the
approximation scheme (\ref{semischeme}). The proof is almost the same as Lemma 2.6 in \cite{Huang} so we omit the detail here. We remark that, unlike the
viscosity solution $u$ of the variational inequality
(\ref{PDE_1})-(\ref{terminal}), the solution $u^{\Delta}$ of the
approximation scheme does not in general have enough regularity.

\begin{proposition} 
\label{semigrouptheorem} Suppose that Assumption 2.1 is satisfied and let $\Delta\in(0,T)$.
Then, the approximation scheme (\ref{semischeme}) admits a unique
solution $u^\Delta\in\mathcal{C}_b(\bar{Q}_{T})$ with
$|u^{\Delta}|_{0}\leq C$.
\end{proposition}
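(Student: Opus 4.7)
The plan is to proceed by backward induction on time layers of width $\Delta$. By Assumption \ref{data assumption}(i), $u^{\Delta}(T,\cdot)=U\in\mathcal{C}_b(\mathbb{R}^{n})$ with $|U|_{0}\leq M$. On the top strip $(T-\Delta,T]$, $u^{\Delta}$ is prescribed explicitly by the interpolation formula (\ref{gdelta}); since the properties of $\mathbf{S}_{T-\Delta}(\Delta)$ recalled from \cite{Huang} ensure $\mathbf{S}_{T-\Delta}(\Delta)U\in\mathcal{C}_b(\mathbb{R}^{n})$, and $f\in\mathcal{C}_b(\bar{Q}_T)$ by hypothesis, the function $g^{\Delta}$ is continuous and bounded on this strip. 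A direct check using $\omega_1(T-\Delta)=0$, $\omega_2(T-\Delta)=1$ shows that the value of $g^{\Delta}$ at $t=T-\Delta$ coincides with the one produced by (\ref{splitting}), so the interface is consistent.

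For the inductive step, assume $u^{\Delta}(t+\Delta,\cdot)\in\mathcal{C}_b(\mathbb{R}^{n})$ for all $t$ in some layer $[T-(k+1)\Delta,T-k\Delta)$. Then (\ref{splitting}) assigns a unique value $u^{\Delta}(t,x)$ to each $(t,x)$ in the layer below. The two key inputs, both established in \cite{Huang}, are that (a) $\phi\mapsto \mathbf{S}_t(\Delta)\phi$ maps $\mathcal{C}_b(\mathbb{R}^{n})$ into itself, and (b) $(t,x)\mapsto \mathbf{S}_t(\Delta)\phi(x)$ is jointly continuous on $[0,T-\Delta]\times\mathbb{R}^{n}$ whenever $\phi\in\mathcal{C}_b(\mathbb{R}^{n})$. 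Combined with the continuity of $f$ and the fact that the minimum of two continuous functions is continuous, these yield $u^{\Delta}\in\mathcal{C}_b$ on the new layer. Uniqueness is automatic, since (\ref{splitting}) determines $u^{\Delta}(t,\cdot)$ rigidly from $u^{\Delta}(t+\Delta,\cdot)$. Iterating this over the $\lceil T/\Delta\rceil$ layers gives existence and uniqueness of $u^{\Delta}\in\mathcal{C}_b(\bar{Q}_T)$.

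For the sup bound I would use a one-step estimate for the backward operator from \cite{Huang} of the form $|\mathbf{S}_t(\Delta)\phi|_{0}\leq |\phi|_{0}+C\Delta$, with $C=C(M)$. Because (\ref{splitting}) forces $u^{\Delta}(t,\cdot)\leq f(t,\cdot)$, the upper bound on $|u^{\Delta}|_{0}$ is immediate from $|f|_{0}\leq M$; the lower bound follows by iterating the one-step estimate over at most $\lceil T/\Delta\rceil$ layers starting from $U$, which yields $|u^{\Delta}|_{0}\leq |U|_{0}+CT\leq C(T,M)$ uniformly in $\Delta$. The main technical point is the joint continuity in (b), since the minimiser $y^{*}$ appearing in (\ref{semigroupequation1}) depends non-trivially on $(t,x)$; however, this is precisely one of the properties of $\mathbf{S}_t(\Delta)$ already provided by \cite{Huang}, so no new work is required and the argument essentially reproduces Lemma 2.6 of that paper.
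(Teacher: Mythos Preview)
Your proposal is correct and follows exactly the approach the paper indicates: the paper omits the proof entirely, stating that it is ``almost the same as Lemma 2.6 in \cite{Huang}'', and your sketch reproduces that argument (backward induction in time layers using the $\mathcal{C}_b$-mapping and joint-continuity properties of $\mathbf{S}_t(\Delta)$ from \cite{Huang}, plus iteration of the one-step bound). You even flag this identification yourself in the last sentence, so there is nothing to add.
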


Thanks to the properties of $S(\Delta,t,x,p,v)$ established in Proposition 2.8 of \cite{Huang}, we immediately obtain the following key
properties of the approximation scheme (\ref{semischeme}).

\begin{proposition}\label{scheme property}
Suppose that Assumption \ref{data assumption} is satisfied and let $\Delta\in(0,T)$, $(t,x)\in \bar{Q}_{T-\Delta}$, $p\in\mathbb{R}$ and $v\in\mathcal{C}_{b}(\mathbb{R}^{n})$. Then, the approximation scheme
$\bar{S}(\Delta,t,x,p,v)$ has the following properties:

(i) (Monotonicity) For any $c_{1}, c_{2}\in\mathbb{R}$, and any function
$u\in\mathcal{C}_{b}(\mathbb{R}^{n})$ with $u\le v$,
$$\bar{S}(\Delta,t,x,p+c_{1},u+c_{2})\ge \bar{S}(\Delta,t,x,p,v)+\min\{\frac{c_{1}-c_{2}}{\Delta},c_{1}\}.$$

(ii) (Consistency) For any
$\phi\in\mathcal{C}_b^{\infty}(\bar{Q}_{T})$,
\begin{align}\label{consistant_error}
 &\ |\max\{-\partial_t\phi+g(t,x,\partial_{x}\phi,\partial_{xx}\phi),\phi-f(t,x)\}-\bar{S}(\Delta,t,x,\phi,\phi(t+\Delta,\cdot))|\notag\\
 \le  &\
 C\Delta\left(|\partial_{tt}\phi|_{0}+|\partial_{xxxx}\phi|_0+|\partial_{xxt}\phi|_{0}+\mathcal{R}(\phi)\right)\ \ \ \text{in}\  \bar{Q}_{T-\Delta},
\end{align}
where the constant $C$ depends only on $[\phi]_{2,1}$, $M$ and $T$,
and $\mathcal{R}(\phi)$ represents the ``insignificant'' terms
containing the lower order derivatives of $\phi$.
\end{proposition}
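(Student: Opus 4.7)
The plan is to reduce both assertions to the analogous properties of the inner one-step operator $S(\Delta,t,x,p,v)=(p-\mathbf{S}_t(\Delta)v(x))/\Delta$, which are already established as \cite{Huang}, Proposition 2.8. The additional obstacle term $p-f(t,x)$ is linear in $p$ and independent of $v$, so it interacts trivially both with perturbations of $v$ and with the $\max$ operation; only an elementary one-line fact about $\max$ is needed to glue the two branches together.

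For part (i), I would first invoke the monotonicity of $S$ from \cite{Huang}, which in the present notation gives
\[
S(\Delta,t,x,p+c_1,u+c_2) \ge S(\Delta,t,x,p,v)+\frac{c_1-c_2}{\Delta}
\]
whenever $u\le v$. On the obstacle branch we simply have $(p+c_1)-f(t,x)=(p-f(t,x))+c_1$. Then I would apply the elementary inequality $\max\{a+\alpha,b+\beta\}\ge \max\{a,b\}+\min\{\alpha,\beta\}$ (a two-line case distinction depending on whether $a$ or $b$ attains the max) with $\alpha=(c_1-c_2)/\Delta$ and $\beta=c_1$. This yields exactly the stated bound, and explains why the constant on the right-hand side is the minimum of the two: it records which branch of the max is active.

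For part (ii), the key observation is that the obstacle term $\phi(t,x)-f(t,x)$ enters identically on both sides of the consistency estimate, namely in $\max\{-\partial_t\phi+g(t,x,\partial_x\phi,\partial_{xx}\phi),\phi-f\}$ and in $\bar S(\Delta,t,x,\phi,\phi(t+\Delta,\cdot))=\max\{S(\Delta,t,x,\phi,\phi(t+\Delta,\cdot)),\phi-f\}$. Since $\max(\cdot,c)$ is $1$-Lipschitz, i.e.\ $|\max\{a,c\}-\max\{b,c\}|\le |a-b|$, the consistency error of $\bar S$ is dominated by the consistency error of $S$ alone. The latter is precisely the content of \cite{Huang}, Proposition 2.8, and delivers the desired bound by $C\Delta(|\partial_{tt}\phi|_0+|\partial_{xxxx}\phi|_0+|\partial_{xxt}\phi|_0+\mathcal{R}(\phi))$.

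There is no real obstacle in this proof: by design, all the analytical heavy lifting (the structure of the Legendre transform, the minimizer in the definition of $\mathbf{S}_t(\Delta)$, and the Taylor expansion underlying the consistency estimate) has been absorbed into \cite{Huang}. The only thing to check carefully is that the Lipschitz property of $\max$ used in (ii) gives the same bound with the same constant, and that in (i) one takes the minimum, not the sum, of the two contributions $\alpha$ and $\beta$, because the two branches of the max may compete.
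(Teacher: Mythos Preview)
Your proposal is correct and follows essentially the same approach as the paper: the paper's own proof is a two-line remark that (i) is immediate from the definitions of $\bar S$ and $\mathbf S_t(\Delta)$, and that (ii) follows from Proposition~2.8(iv) in \cite{Huang}. You have simply made explicit the two elementary facts about $\max$ (the inequality $\max\{a+\alpha,b+\beta\}\ge\max\{a,b\}+\min\{\alpha,\beta\}$ for (i), and the $1$-Lipschitz property of $\max(\cdot,c)$ for (ii)) that the paper leaves implicit.
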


\begin{proof}
(i) follows immediately from the definition of $\bar{S}$, (\ref{Sbar1})-(\ref{Sbar2}), and that of $\mathbf{S}_{t}(\Delta)$, (\ref{semigroupequation1}). (ii) follows from Proposition 2.8 (iv) in \cite{Huang}.
\end{proof}\\

The monotonicity property (i) in Proposition \ref{scheme property} then implies the following comparison result for the approximation scheme (\ref{semischeme}), which will be used throughout this paper. The proof is analogous to Proposition 2.9 of \cite{Huang}, with a slight difference to accommodate the extension to the variational inequality case.

\begin{proposition}\label{schemecomparison}
Suppose that Assumption 2.1 is satisfied, and that $u$,
$v\in\mathcal{C}_b(\bar{Q}_T)$ are such that
$$\bar{S}(\Delta,t,x,u,u(t+\Delta,\cdot)) \leq h_1\ \text{in}\  \bar{Q}_{T-\Delta},$$
$$\bar{S}(\Delta,t,x,v,v(t+\Delta,\cdot)) \ge h_2\ \text{in}\  \bar{Q}_{T-\Delta},$$
for some $h_1$, $h_2\in\mathcal{C}_b(\bar{Q}_{T-\Delta})$. Then,
\begin{equation}
u-v\leq \sup_{\bar{Q}_{T}\backslash
\bar{Q}_{T-\Delta}}(u-v)^{+}+(T-t+1)\sup_{\bar{Q}_{T-\Delta}}(h_{1}-h_{2})^{+}\
\text{in}\ \bar{Q}_{T}.
\end{equation}
\end{proposition}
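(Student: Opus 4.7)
The plan is to translate $u$ by a carefully chosen decreasing function of time and then run a slab-by-slab backward induction using the monotonicity of the scheme. Set $m := \sup_{\bar{Q}_T\setminus \bar{Q}_{T-\Delta}}(u-v)^+$ and $\eta := \sup_{\bar{Q}_{T-\Delta}}(h_1-h_2)^+$, and define $\phi(t) := m + (T-t+1)\eta$ together with $\tilde{u}(t,x):=u(t,x)-\phi(t)$. The desired bound is exactly $\tilde{u} \leq v$ on $\bar{Q}_T$. By construction, on the terminal layer $\bar{Q}_T\setminus \bar{Q}_{T-\Delta}$ we have $\tilde{u} - v \leq (u-v) - m \leq 0$, so the task reduces to propagating this inequality back through $\bar{Q}_{T-\Delta}$.

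Next I would verify that $\tilde{u}$ satisfies a scheme inequality dominated by the one for $v$ on $\bar{Q}_{T-\Delta}$. The operator $\mathbf{S}_t(\Delta)$ commutes with additive constants (immediate from (\ref{semigroupequation1})), so
$$S(\Delta,t,x,\tilde{u}(t,x),\tilde{u}(t+\Delta,\cdot)) = S(\Delta,t,x,u,u(t+\Delta,\cdot)) + \frac{\phi(t+\Delta)-\phi(t)}{\Delta} = S(\Delta,t,x,u,u(t+\Delta,\cdot)) - \eta.$$
Since $\phi(t) \geq \eta$ on $\bar{Q}_{T-\Delta}$, one also has $\tilde{u}(t,x) - f(t,x) \leq u(t,x) - f(t,x) - \eta$. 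Taking the maximum of the two and combining with the given one-sided scheme inequalities gives
$$\bar{S}(\Delta,t,x,\tilde{u},\tilde{u}(t+\Delta,\cdot)) \leq \bar{S}(\Delta,t,x,u,u(t+\Delta,\cdot)) - \eta \leq h_1 - \eta \leq h_2 \leq \bar{S}(\Delta,t,x,v,v(t+\Delta,\cdot)).$$

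Finally I would propagate $\tilde{u} \leq v$ from the terminal layer into $\bar{Q}_{T-\Delta}$ by backward induction on the slabs $[T-(k+1)\Delta, T-k\Delta]\cap \bar{Q}_{T-\Delta}$. Assume inductively that $\tilde{u}(t+\Delta,\cdot)\leq v(t+\Delta,\cdot)$ pointwise, and suppose for contradiction that $\tilde{u}(t,x_0) > v(t,x_0)$ at some $x_0$. The monotonicity of $\mathbf{S}_t(\Delta)$ (clear from its minimization form) together with the induction hypothesis yields $\mathbf{S}_t(\Delta)\tilde{u}(t+\Delta,\cdot)(x_0)\leq \mathbf{S}_t(\Delta)v(t+\Delta,\cdot)(x_0)$; combined with $\tilde{u}(t,x_0) > v(t,x_0)$ this makes \emph{both} arguments of the max defining $\bar{S}(\Delta,t,x_0,\tilde{u},\tilde{u}(t+\Delta,\cdot))$ strictly larger than the corresponding arguments on the $v$ side, contradicting the inequality of the previous paragraph. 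Hence $\tilde{u}(t,\cdot)\leq v(t,\cdot)$, and iterating the slabs back to $t=0$ closes the argument. The only real subtlety is the alignment of constants: $\phi$ must be chosen so that a single shift by $\eta$ dominates \emph{both} components of the max $\bar{S}=\max\{S,\,\cdot - f\}$, and the ``$+1$'' in $(T-t+1)$ is exactly what ensures $\phi \geq \eta$ at every $t \in \bar{Q}_{T-\Delta}$, which is why the final bound carries the factor $(T-t+1)$ rather than $(T-t)$.
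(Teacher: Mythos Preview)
Your argument is correct. Both you and the paper begin with the same reduction: shift one of the functions by $\phi(t)=m+(T-t+1)\eta$ so that after the shift the sub/supersolution inequalities are ordered and the boundary data are ordered (the paper shifts $v$ up, you shift $u$ down, which is equivalent). The difference lies in how the remaining comparison is established. The paper runs a global penalization argument: it introduces $\psi_b(t)=b(T-t)$, sets $M(b)=\sup_{\bar Q_T}\{u-v-\psi_b\}$, assumes $M(0)>0$, and along a maximizing sequence uses the monotonicity property~(i) of Proposition~\ref{scheme property} to force $b\le 0$, a contradiction. You instead exploit the discrete-in-time structure directly, propagating $\tilde u\le v$ backward one $\Delta$-slab at a time via the monotonicity of $\mathbf{S}_t(\Delta)$ and a pointwise contradiction at any violating $x_0$. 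Your route is more elementary and makes transparent why the scheme's step-$\Delta$ coupling is all that is needed; the paper's route is closer in spirit to viscosity-solution comparison proofs and does not explicitly partition $[0,T-\Delta]$ into slabs. Both arguments hinge on the same observation you flag at the end: the ``$+1$'' in $(T-t+1)$ guarantees $\phi(t)\ge\eta$ on all of $\bar Q_{T-\Delta}$, so that the shift dominates \emph{both} components of $\bar S=\max\{S,\,\cdot-f\}$ simultaneously.
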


\begin{proof}
Without loss of generality, we assume that
\begin{equation}\label{comparisonforscheme}
u\le v\ \text{in}\ \bar{Q}_{T}\backslash \bar{Q}_{T-\Delta}\ \
\text{and}\ \ h_{1}\leq h_{2}\ \text{in}\ \bar{Q}_{T-\Delta},
\end{equation}
since, otherwise, the function $w:=v+\sup_{\bar{Q}_{T}\backslash
\bar{Q}_{T-\Delta}}(u-v)^{+}+(T-t+1)\sup_{\bar{Q}_{T-\Delta}}(h_{1}-h_{2})^{+}$
satisfies that $u\le w$ in $\bar{Q}_{T}\backslash
\bar{Q}_{T-\Delta}$ and by the monotonicity property (i) in Proposition
\ref{scheme property},
\begin{align*}
\bar{S}(\Delta,t,x,w,w(t+\Delta,\cdot)) &\ge
 \bar{S}(\Delta,t,x,v,v(t+\Delta,\cdot))+\sup_{\bar{Q}_{T-\Delta}}(h_{1}-h_{2})^{+}\\
&\ge h_{2}+\sup_{\bar{Q}_{T-\Delta}}(h_{1}-h_{2})^{+} \ge h_{1}\
\text{in}\ \bar{Q}_{T-\Delta}.
\end{align*}
Thus, it suffices to prove $u\le v$ in $\bar{Q}_{T}$ when
(\ref{comparisonforscheme}) holds.

To this end, for $b\ge 0$, let $\psi_{b}(t):=b(T-t)$ and
$M(b):=\sup_{\bar{Q}_{T}}\{u-v-\psi_{b}\}.$ Then our goal becomes to
prove $M(0)\leq 0$ and we prove by contradiction. Assume $M(0)>0$, then by the continuity of $M$,
we must have $M(b)>0$ for some $b>0$. For such $b$, take a sequence
$\{(t_{n},x_{n})\}_{n}$ in $\bar{Q}_{T}$ such that $\delta_{n}:=M(b)-(u-v-\psi_{b})(t_{n},x_{n})\downarrow 0,\ \ \text{as}\ n\to\infty.$
Since $M(b)>0$ but $u-v-\psi_{b}\leq 0$ in $\bar{Q}_{T}\backslash \bar{Q}_{T-\Delta}$, we must have $t_{n}\leq T-\Delta$ for sufficiently large $n$. Then for such $n$, we use the monotonicity property (i) in Proposition \ref{scheme property} again to obtain  
\begin{align*}
h_1(t_{n},x_{n}) \ge &\ \bar{S}(\Delta,t_{n},x_{n},u(t_{n},x_{n}),u(t_{n}+\Delta,\cdot)) \\
 \ge &\
 \bar{S}(\Delta,t_{n},x_{n},v(t_{n},x_{n})+\psi_{b}(t_{n})+M(b)-\delta_{n},v(t_{n}+\Delta,\cdot)+\psi_{b}(t_{n}+\Delta)+M(b)) \\
 \ge &\
 \bar{S}(\Delta,t_{n},x_{n},v(t_{n},x_{n}),v(t_{n}+\Delta,\cdot))+\min\{b-\delta_{n}\Delta^{-1},\psi_{b}(t_{n})+M(b)-\delta_{n}\} \\
 \ge &\
 h_{2}(t_{n},x_{n})+\min\{1,\Delta\}(b-\delta_{n}\Delta^{-1}),
\end{align*}
where the last inequality follows from $M(b)>0$ and $\psi_{b}(t_{n})\ge \Delta b$.
Since $h_{1}\leq h_{2}$ in $\bar{Q}_{T-\Delta}$, we then must have
$b-\delta_{n}\Delta^{-1}\leq 0$. Thus, we deduce $b\leq 0$ by
letting $n\to\infty$, which is a contradiction.
\end{proof}


\section{Convergence rate of the  approximation scheme}

In this section, we establish the (uniform) convergence rate of the approximate solution
$u^{\Delta}$ to the viscosity solution $u$ of the variational inequality
(\ref{PDE_1})-(\ref{terminal}), which is the main result of this paper. To this end, we shall derive a (uniform)  bound for the approximation error $u-u^{\Delta}$ in $\bar{Q}_{T}$.

We start with the approximation error in the last time interval $\bar{Q}_{T}\backslash \bar{Q}_{T-\Delta}$, where the value of $u^{\Delta}$ involves only a one-time approximation and some linear interpolation. Therefore, the bound for the approximation error in this domain can be easily obtained by some properties of the backward operator $\mathbf{S}_t(\Delta)$ and some regularity results of $u$. This is demonstrated in the following lemma.

\begin{lemma}\label{errorsmall}
Suppose that Assumption \ref{data assumption} is satisfied. Let $\Delta\in(0,T)$,
$u^{\Delta}\in\mathcal{C}_b(\bar{Q}_{T})$ be the unique solution of the approximation scheme
(\ref{semischeme}) and $u\in\mathcal{C}_b^{1}(\bar{Q}_{T})$ be the unique viscosity solution of equation (\ref{PDE_1})-(\ref{terminal}). Then, 
\begin{equation}\label{estimate_final_interval}
\sup_{\bar{Q}_{T}\backslash \bar{Q}_{T-\Delta}}|u-u^{\Delta}|\leq C\sqrt{\Delta}.
\end{equation}
\end{lemma}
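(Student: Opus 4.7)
The plan is to exploit the simple structure of $u^\Delta$ on the final slab $\bar{Q}_T\setminus\bar{Q}_{T-\Delta}$, where by definition (\ref{gdelta}) it is a linear interpolation (in $t$) between the values $U(x)$ at $t=T$ and $\min\{\mathbf{S}_{T-\Delta}(\Delta)U(x),\,f(T-\Delta,x)\}$ at $t=T-\Delta$. I will show separately that both $u(t,x)$ and $u^\Delta(t,x)$ are $O(\sqrt\Delta)$-close to $U(x)$ throughout the slab, and conclude by the triangle inequality.

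First, I would invoke Proposition \ref{solutionproperty}: since $u\in\mathcal{C}_b^1(\bar Q_T)$ with $|u|_1\le C$, the time semi-norm $[u]_{1,1/2}\le C$ gives $|u(t,x)-U(x)|=|u(t,x)-u(T,x)|\le C\sqrt\Delta$ for all $t\in[T-\Delta,T]$.

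Second, I would establish $|u^\Delta(T-\Delta,x)-U(x)|\le C\sqrt\Delta$. Since $f(T,\cdot)\ge U$ and $[f]_1\le M$, one has $f(T-\Delta,x)\ge U(x)-C\Delta$, and the only remaining ingredient is the one-step estimate
$$|\mathbf{S}_{T-\Delta}(\Delta)U(x) - U(x)|\le C\sqrt\Delta.$$
For the upper bound I would plug the admissible choice $y=x$ into the minimization (\ref{semigroupequation1}) and combine the Brownian moment bound $\mathbf{E}|Y_T^{T-\Delta,x}-x|\le C\sqrt\Delta$, the Lipschitz property $|U|_1\le M$, and the uniform boundedness of $L(\cdot,\cdot,0)$ (which follows from the convexity and uniform coercivity of $H$ via its Legendre dual). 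For the lower bound I would, for a generic $y$, use $\mathbf{E}[U(Y_T^{T-\Delta,y})]\ge U(y)-C\sqrt\Delta$, substitute $y=x-\Delta q$, and appeal to the superlinearity of $L$ (dual to the coercivity of $H$) to bound $\min_q\{L(T-\Delta,x,q)-C|q|\}$ below uniformly in $(t,x)$. These two bounds, together with $f(T-\Delta,x)\ge U(x)-C\Delta$ and the monotonicity of $\min$, give the claim.

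Third, since $u^\Delta(t,x)=\omega_1(t)U(x)+\omega_2(t)u^\Delta(T-\Delta,x)$ with $\omega_1+\omega_2=1$ and $\omega_1,\omega_2\in[0,1]$, the previous step upgrades to $|u^\Delta(t,x)-U(x)|\le C\sqrt\Delta$ throughout $\bar Q_T\setminus\bar Q_{T-\Delta}$. Combining with the first step by the triangle inequality yields the estimate.

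The main technical obstacle is the lower bound in the one-step estimate on $\mathbf{S}_{T-\Delta}(\Delta)U$; this rests on showing that $\min_q\{L(T-\Delta,x,q)-C|q|\}$ is bounded below uniformly in $(t,x)$, i.e.\ on the superlinearity of $L$ dual to the uniform coercivity of $H$. In practice this is already packaged in the properties of $\mathbf{S}_t(\Delta)$ proved in \cite{Huang}, so the argument here reduces to a direct appeal to those estimates.
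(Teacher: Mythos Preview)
Your proposal is correct and follows essentially the same approach as the paper: both show that $u$ and $u^\Delta$ are each $O(\sqrt\Delta)$-close to $U$ on the final slab (the paper citing property (v) of $\mathbf{S}_t(\Delta)$ from \cite{Huang} directly, as you ultimately do too) and then conclude by the triangle inequality via the linear-interpolation structure of $g^\Delta$. One small slip: since $[f]_1$ controls only the H\"older-$1/2$ norm in time, the correct bound is $f(T-\Delta,x)\ge U(x)-M\sqrt\Delta$ rather than $U(x)-C\Delta$, but this is inconsequential for the final estimate.
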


\begin{proof}
From the property
(\emph{v}) of the operator $\mathbf{S}_{t}(\Delta)$ (cf. Proposition 2.5 in \cite{Huang}), we have
$|U-\mathbf{S}_{T-\Delta}(\Delta)U|_{0}\le C\sqrt{\Delta}.$
On the other hand, by Assumption \ref{data assumption}(i), for any $x\in\mathbb{R}^{n}$,
$f(T-\Delta,x)\ge f(T,x)-[f]_{1,1/2}\sqrt{\Delta}\ge U(x)-M\sqrt{\Delta}.$
The above two inequalities then imply that for any $x\in\mathbb{R}^{n}$,
\begin{equation}\label{est}
|U(x)-\min\{\mathbf{S}_{T-\Delta}(\Delta)U(x),f(T-\Delta,x)\}|\le C\sqrt{\Delta}.
\end{equation}
Then from (\ref{semischeme}), we have, for $(t,x)\in \bar{Q}_{T}\backslash \bar{Q}_{T-\Delta}$, 
\begin{align*}
|u(t,x)-u^{\Delta}(t,x)|= &\ |u(t,x)-g^{\Delta}(t,x)|\\
=&\ |u(t,x)-u(T,x)+\omega_{2}(t)(U(x)-\min\{\mathbf{S}_{T-\Delta}(\Delta)U(x),f(T-\Delta,x)\})| \\
 \leq &\
 |u(t,x)-u(T,x)|+|U(x)-\min\{\mathbf{S}_{T-\Delta}(\Delta)U(x),f(T-\Delta,x)\}| \\
 \leq &\
 C(\sqrt{|T-t|}+\sqrt{\Delta}) \leq C\sqrt{\Delta},
\end{align*}
where the
second to last inequality follows from the regularity property of the
solution $u$ (cf. Proposition \ref{solutionproperty}) and (\ref{est}).
\end{proof}\\

Next, we derive a bound of approximation error within the whole domain $\bar{Q}_{T}$. We first consider a special case when (\ref{PDE_1})-(\ref{terminal}) admits a unique smooth solution $u$ with
bounded derivatives of any order. 

\begin{theorem} Suppose that Assumption \ref{data assumption} is
satisfied. Let $\Delta\in(0,T)$ and 
$u^{\Delta}\in\mathcal{C}_b(\bar{Q}_{T})$ be the unique solution of the approximation scheme
(\ref{semischeme}). Suppose that equation
(\ref{PDE_1})-(\ref{terminal}) admits a unique smooth solution
$u\in\mathcal{C}_b^{\infty}(\bar{Q}_T)$. Then,
$$|u-u^{\Delta}|\leq C\sqrt{\Delta}\ \ \text{in}\ \bar{Q}_T.$$
\end{theorem}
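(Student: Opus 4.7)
The plan is to combine the consistency estimate (Proposition \ref{scheme property}(ii)) with the scheme comparison principle (Proposition \ref{schemecomparison}) and the terminal-layer estimate (Lemma \ref{errorsmall}). Since $u\in\mathcal{C}_b^{\infty}(\bar{Q}_T)$, all of the derivative seminorms appearing on the right-hand side of (\ref{consistant_error}) are bounded by a constant depending only on $u$, $M$ and $T$. Applying the consistency estimate with $\phi=u$ and using the fact that $u$ is a classical solution of (\ref{PDE_1}), so that
\begin{equation*}
\max\{-\partial_t u+g(t,x,\partial_{x}u,\partial_{xx}u),u-f(t,x)\}=0\ \ \text{in}\ \bar{Q}_{T-\Delta},
\end{equation*}
I would deduce immediately that
\begin{equation*}
|\bar{S}(\Delta,t,x,u(t,x),u(t+\Delta,\cdot))|\le C\Delta\ \ \text{in}\ \bar{Q}_{T-\Delta}.
\end{equation*}

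Next I would feed this into the comparison principle in both directions. Since $u^{\Delta}$ exactly satisfies $\bar{S}(\Delta,t,x,u^{\Delta},u^{\Delta}(t+\Delta,\cdot))=0$ on $\bar{Q}_{T-\Delta}$, the consistency bound gives
\begin{equation*}
\bar{S}(\Delta,t,x,u,u(t+\Delta,\cdot))\le C\Delta\quad\text{and}\quad \bar{S}(\Delta,t,x,u^{\Delta},u^{\Delta}(t+\Delta,\cdot))\ge -C\Delta.
\end{equation*}
Applying Proposition \ref{schemecomparison} once with $(u,u^{\Delta})$ playing the roles of its $(u,v)$, and once with the roles swapped, I obtain
\begin{equation*}
\sup_{\bar{Q}_T}|u-u^{\Delta}|\le \sup_{\bar{Q}_T\setminus\bar{Q}_{T-\Delta}}|u-u^{\Delta}|+(T+1)C\Delta.
\end{equation*}

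Finally, I would invoke Lemma \ref{errorsmall} to bound the terminal-layer term by $C\sqrt{\Delta}$, which dominates the $C\Delta$ contribution from the consistency step and yields the claimed rate $|u-u^{\Delta}|\le C\sqrt{\Delta}$ in $\bar{Q}_T$. No real obstacle arises here because $u$ is assumed smooth and the consistency estimate is already provided; the only subtlety is that the $\sqrt{\Delta}$ rate, rather than the $O(\Delta)$ suggested by consistency alone, is forced by the $O(\sqrt{\Delta})$ error committed by the linear interpolation on the final interval $\bar{Q}_T\setminus\bar{Q}_{T-\Delta}$. The harder part of the paper, presumably deferred to the subsequent theorems, will be removing the smoothness assumption on $u$ via Krylov's shaking-coefficients regularisation together with the variant switching system, since the scheme solution $u^{\Delta}$ itself lacks regularity.
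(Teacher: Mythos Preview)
Your proposal is correct and follows essentially the same argument as the paper: apply the consistency estimate (\ref{consistant_error}) to the smooth solution $u$ to get $|\bar{S}(\Delta,t,x,u,u(t+\Delta,\cdot))|\le C\Delta$, combine with $\bar{S}(\Delta,t,x,u^{\Delta},u^{\Delta}(t+\Delta,\cdot))=0$ via the comparison principle (Proposition \ref{schemecomparison}) in both directions, and finish with the terminal-layer bound from Lemma \ref{errorsmall}. The only cosmetic difference is that the paper states the comparison conclusion directly as a two-sided bound, whereas you spell out the two applications; the logic and the resulting $\sqrt{\Delta}$ rate (driven by the interpolation error on $\bar{Q}_T\setminus\bar{Q}_{T-\Delta}$) are identical.
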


\begin{proof} Using that $u\in\mathcal{C}_b^{\infty}(\bar{Q}_T)$, the consistency error
estimate (\ref{consistant_error}) yields
\begin{align*}
&|\bar{S}(\Delta,t,x,u(t,x),u(t+\Delta,\cdot))|\\
\leq&\
C\Delta\left(|\partial_{tt}u|_{0}+|\partial_{xxxx}u|_0+|\partial_{xxt}u|_{0}+\mathcal{R}(u)\right)
\leq C\Delta,
\end{align*}
 for
$(t,x)\in\bar{Q}_{T-\Delta}$. 
 On the other hand, from the definition
of the approximation scheme (\ref{semischeme}), we have
$$\bar{S}(\Delta,t,x,u^{\Delta}(t,x),u^{\Delta}(t+\Delta,\cdot))=0,$$
for $(t,x)\in\bar{Q}_{T-\Delta}$. In turn, the comparison principle
in Proposition \ref{schemecomparison} implies
$$|u-u^{\Delta}|\leq \sup_{\bar{Q}_T\backslash\bar{Q}_{T-\Delta}}|u-u^{\Delta}|+(T-t+1)C\Delta\ \ \text{in}\ \bar{Q}_T.$$
The conclusion then follows by using the
estimate (\ref{estimate_final_interval}) in Lemma \ref{errorsmall}.
\end{proof}\\

In general, since (\ref{PDE_1})-(\ref{terminal}) only
admits a viscosity solution $u\in\mathcal{C}_b^1(\bar{Q}_T)$ (cf.  Proposition \ref{solutionproperty}) due to
the possible degeneracies of the equation, the above result does not
hold. A natural idea is then to approximate the viscosity solution
$u$ by a sequence of smooth sub- and supersolutions
$u_{\varepsilon}$ and, in turn, compare them with $u^{\Delta}$
using the comparison result for the approximation scheme (cf. Proposition \ref{schemecomparison}) to obtain the upper and lower bound for the approximation error seperately. We carry
out this {procedure} next.


\subsection{Upper bound for the approximation {error}}\label{upbd}

We now derive an upper bound for the approximation error within the whole domain $\bar{Q}_{T}$ for the general $u\in\mathcal{C}_b^1(\bar{Q}_T)$ case. We first construct a sequence of smooth subsolutions to
(\ref{PDE_1}) by perturbing its coefficients. This approach, known as the \emph{shaking
coefficients technique}, was initially proposed by Krylov \cite{Krylov1}
\cite{Krylov}, and further developed by Barles and Jakobsen
\cite{BJ1} \cite{Jakobsen}. We apply this approach to obtain an upper bound for the approximation error
$u-u^{\Delta}$.

To this end, for small enough $\varepsilon\ge 0$, we extend the functions $f$ and $\eta:=\sigma, b$  to
${Q}^{-\varepsilon^{2}}_{T+\varepsilon^{2}}:=[-\varepsilon^{2},T+\varepsilon^{2})\times\mathbb{R}^{n}$
and $H$ to
${Q}^{-\varepsilon^{2}}_{T+\varepsilon^{2}}\times\mathbb{R}^{n}$, so that Assumption \ref{data assumption} still holds.
We then define $\eta^\theta(t,x):=\eta(t+\tau,x+e)$ and
$H^{\theta}(t,x,p):=H(t+\tau,x+e,p)$, where $\theta=(\tau,e)$ with
$\theta\in\Theta^\varepsilon:=[-\varepsilon^2,0]\times\varepsilon
B(0,1)$. We then consider the perturbed version of
(\ref{PDE_1})-(\ref{terminal}), namely,
\begin{align}
\max\{-\partial_tu^{\varepsilon}+\sup_{\theta\in\Theta^{\varepsilon}}g^{\theta}(t,x,\partial_{x}u^{\varepsilon},\partial_{xx}u^{\varepsilon}),u^{\varepsilon}-f(t,x)\}&=0 & \text{in}\ Q_{T+{\varepsilon}^{2}};\label{PtbedPDE}\\
u^{\varepsilon}(T+{\varepsilon}^{2},x)&=U(x) & \text{in}\ \mathbb{R}^n,\label{Ptbed_terminal}
\end{align}
where
$$g^{\theta}(t,x,p,X)=-\frac{1}{2}\text{Trace}\left(\sigma^{\theta}{\sigma^{\theta}}^T(t,x)X\right)-b^{\theta}(t,x)\cdot p+H^{\theta}(t,x,p).$$
Note that by letting the perturbation parameter $\varepsilon=0$, we can retrieve our original variation inequality (\ref{PDE_1})-(\ref{terminal}). 

We establish existence, uniqueness and regularity results for the perturbed equation
(\ref{PtbedPDE})-(\ref{Ptbed_terminal}), and a comparison between
$u$ and $u^{\varepsilon}$ in the next proposition, whose proof is provided in Appendix \ref{App1}.

\begin{proposition}\label{upperbound_1}
Suppose that Assumption \ref{data assumption} is satisfied. Then, for small enough $\varepsilon\ge 0$,
there exists a unique viscosity solution
$u^{\varepsilon}\in\mathcal{C}^{1}_b(\bar{Q}_{T+\varepsilon^2})$ of (\ref{PtbedPDE})-(\ref{Ptbed_terminal}), with
$|u^{\varepsilon}|_{1}\leq C$. 
Moreover,
\begin{equation}\label{est2}
 {|u-u^{\varepsilon}|}\leq C\varepsilon\ \ \text{in}\ \bar{Q}_T.
\end{equation}
\end{proposition}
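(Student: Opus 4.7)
The statement decomposes into two parts: the well-posedness and uniform $\mathcal{C}^1_b$-regularity of $u^{\varepsilon}$, and the quantitative estimate $|u-u^{\varepsilon}|\leq C\varepsilon$ on $\bar Q_T$. I expect the main obstacle to be one of the two directions of the latter inequality, namely the one that requires the full Krylov shaking-coefficients argument.

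First, for the well-posedness and regularity of $u^{\varepsilon}$, I would observe that the perturbed equation (\ref{PtbedPDE})-(\ref{Ptbed_terminal}) has exactly the same HJB-obstacle structure as (\ref{PDE_1})-(\ref{terminal}) once $\theta\in\Theta^{\varepsilon}$ is viewed as an auxiliary control in a compact set. The effective Hamiltonian $\tilde H(t,x,p):=\sup_{\theta}H^{\theta}(t,x,p)$ inherits convexity in $p$ (supremum of convex functions is convex), the uniform coercivity (since coercivity is uniform in $(t,x)$ for $H$), and the time-Lipschitz bound $[\tilde H(\cdot,\cdot,p)]_1\leq M$; likewise $\sigma^{\theta},b^{\theta}$ satisfy Assumption \ref{data assumption}(i) uniformly in $\theta\in\Theta^{\varepsilon}$. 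Thus Assumption \ref{data assumption} is preserved for the perturbed problem with constants independent of $\varepsilon$ (small), and the proof of Proposition \ref{solutionproperty} given in Appendix A applies verbatim to yield $u^{\varepsilon}\in\mathcal{C}^1_b(\bar Q_{T+\varepsilon^2})$ with $|u^{\varepsilon}|_1\leq C$ independent of $\varepsilon$.

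For the error bound I would split into two inequalities. The easy direction $u^{\varepsilon}\leq u+C\varepsilon$ is immediate from $\sup_{\theta}g^{\theta}\geq g^0=g$: any viscosity subsolution of the perturbed equation is automatically a viscosity subsolution of (\ref{PDE_1}), so $u^{\varepsilon}$ restricted to $\bar Q_T$ is a subsolution of (\ref{PDE_1}). Its terminal mismatch at $t=T$ is controlled by the Hölder-$1/2$ time regularity of $u^{\varepsilon}$: $|u^{\varepsilon}(T,x)-U(x)|=|u^{\varepsilon}(T,x)-u^{\varepsilon}(T+\varepsilon^2,x)|\leq[u^{\varepsilon}]_{1,1/2}\varepsilon\leq C\varepsilon$. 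The comparison principle for (\ref{PDE_1})-(\ref{terminal}) then yields the bound on $\bar Q_T$.

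The reverse inequality $u\leq u^{\varepsilon}+C\varepsilon$ is the hard part. The plan is to realize $u$ (shifted down by an $O(\varepsilon)$ correction) as an $O(\varepsilon)$-approximate viscosity subsolution of the perturbed equation, and then apply the comparison principle for the perturbed equation against the supersolution $u^{\varepsilon}$. Lipschitz continuity of $\sigma,b,H$ in $(t,x)$ gives the formal bound $\sup_{\theta}g^{\theta}(t,x,p,X)-g(t,x,p,X)\leq C\varepsilon(|X|+|p|+1)$, which would close the argument at once if $u$ had bounded second derivatives; since $u\in\mathcal{C}^1_b$ only, the $|X|$ term has no classical control and is the source of the main technical difficulty. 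The standard remedy is either a sup-convolution regularization of $u$ at a scale chosen to balance the smoothing error against the $\varepsilon$-perturbation, or a direct viscosity argument exploiting the monotonicity of $g$ in $X$ together with the specific one-sided structure of $\Theta^{\varepsilon}$. Combined with the terminal mismatch $|u(T,\cdot)-u^{\varepsilon}(T,\cdot)|\leq C\varepsilon$ and comparison for the perturbed equation, this delivers the claimed $O(\varepsilon)$ bound, with the technical details deferred to Appendix A.
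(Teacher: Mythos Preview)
Your reduction of the perturbed equation to an instance of (\ref{PDE_1}) via an effective Hamiltonian $\tilde H=\sup_\theta H^\theta$ does not quite work: the perturbation $\theta$ also enters the diffusion and drift, so $\sup_\theta g^\theta(t,x,p,X)$ cannot be written as $-\tfrac12\mathrm{tr}(\sigma\sigma^T X)-b\cdot p+\tilde H(t,x,p)$ for any fixed $\sigma,b$. This is a minor issue, since Appendix~A in fact proves Proposition~\ref{upperbound_1} directly (Proposition~\ref{solutionproperty} being the special case $\varepsilon=0$), so your appeal to that proof is legitimate even if the stated justification is off.

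The more substantial divergence is in the error estimate $|u-u^\varepsilon|\le C\varepsilon$. The paper does \emph{not} split into an easy and a hard direction, and in particular invokes neither shaking nor sup-convolution here. Instead it observes that $u$ is itself a solution of an equation of the \emph{same} perturbed form (\ref{PtbedPDE}), namely with the $\theta$-constant coefficient family $\sigma^\theta\equiv\sigma$, $b^\theta\equiv b$, $H^\theta\equiv H$, for which the $\sup_\theta$ is trivial. A single continuous-dependence lemma for equations of type (\ref{PtbedPDE}) (Lemma~\ref{Ptbedproperty}) then yields both inequalities symmetrically: $|u-u^\varepsilon|$ is bounded by the terminal mismatch (which is $\le C\varepsilon$ via the $t$-regularity of $u^\varepsilon$, exactly as you note) plus $\sup_\theta\{|\sigma^\theta-\sigma|_0+|b^\theta-b|_0+|H^\theta-H|_0\}$, which is $O(\varepsilon)$ by the Lipschitz bounds in Assumption~\ref{data assumption}. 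The $|X|$ obstruction you identify never arises, because the doubling-of-variables proof of Lemma~\ref{Ptbedproperty} handles the second-order terms at the viscosity level through the Ishii matrix inequality; your proposed sup-convolution route would essentially amount to re-deriving that lemma by hand. The Krylov shaking mechanism is indeed used in the paper, but only downstream (in the proof of Theorem~\ref{theorem_error_1}), where it \emph{consumes} Proposition~\ref{upperbound_1} rather than establishing it.
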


Next, we  regularize $u^{\varepsilon}$ by
a standard mollification procedure. For this, let $\rho(t,x)$ be an $\mathbb{R}_+$-valued
smooth function with support in $(-1,0)\times B(0,1)$
and mass $1$, and introduce the sequence of mollifiers
$\rho_{\varepsilon}$ for $\varepsilon>0$,
\begin{equation}\label{mollifer}
\rho_{\varepsilon}(t,x):=\frac{1}{\varepsilon^{n+2}}\rho\left(\frac{t}{\varepsilon^2},\frac{x}{\varepsilon}\right).
\end{equation}
For $(t,x)\in \bar{Q}_T$, we then define
$$u_{\varepsilon}(t,x)=u^{\varepsilon}*
\rho_{\varepsilon}(t,x)=\int_{-\varepsilon^2< \tau< 0}\int_{|e|<
\varepsilon}u^{\varepsilon}(t-\tau,x-e)\rho_{\varepsilon}(\tau,e)ded\tau.$$
Standard properties of mollifiers imply that
$u_{\varepsilon}\in\mathcal{C}_b^{\infty}(\bar{Q}_{T})$,
\begin{equation}\label{upperbound_2}
|u^{\varepsilon}-u_{\varepsilon}|_0\leq C\varepsilon,
\end{equation}
and, moreover, for positive integer $i$ and multiindex $j$,
\begin{equation}\label{mollifier}
|\partial_{t}^iD_{x}^ju_{\varepsilon}|_0\leq C\varepsilon^{1-2i-|j|}.
\end{equation}

We observe from (\ref{PtbedPDE}) that the function $v_{\theta}^{\varepsilon}(t,x):=u^{\varepsilon}(t-\tau,x-e)$ is a viscosity subsolution of 
\begin{equation}\label{pde1}
-\partial_{t}w(t,x)+g(t,x,\partial_{x}w(t,x),\partial_{xx}w(t,x))=0,
\end{equation}
in ${Q}_{T}$ for any
{$\theta\in\Theta^{\varepsilon}$}. On the other hand, a Riemann sum approximation shows that $u_{\varepsilon}(t,x)$ can be
viewed as the limit of convex combinations of
$v_{\theta}^{\varepsilon}(t,x)$ for {$\theta\in\Theta^{\varepsilon}$}. Since the nonlinear term
$g(t,x,p,X)$ is convex in $p$ and linear in
$X$, the convex combinations of
$v_{\theta}^{\varepsilon}(t,x)$ are also subsolutions of (\ref{pde1}) in ${Q}_{T}$. Using the stability of viscosity solutions, we deduce that $u_{\varepsilon}(t,x)$ is still a subsolution
of (\ref{pde1}) in $Q_T$, namely,
\begin{equation}\label{sub2}
-\partial_{t}u_{\varepsilon}+g(t,x,\partial_{x}u_{\varepsilon},\partial_{xx}u_{\varepsilon})\le 0.
\end{equation}
%
%
%

We are now in a position to establish an upper bound for the approximation error.

\begin{theorem}\label{theorem_error_1}
Suppose that Assumption \ref{data assumption} is satisfied. Let $\Delta\in(0,T)$,
$u^{\Delta}\in\mathcal{C}_b(\bar{Q}_{T})$ be the unique solution of the approximation scheme
(\ref{semischeme}) and $u\in\mathcal{C}_b^{1}(\bar{Q}_{T})$ be the unique viscosity solution of equation (\ref{PDE_1})-(\ref{terminal}). Then, 
$$u-u^{\Delta}\leq C\Delta^{\frac14}\ \text{in}\ \bar{Q}_{T}.$$
\end{theorem}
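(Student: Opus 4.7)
The plan is to carry out the \emph{shaking coefficients} programme already set up in the paragraphs preceding the theorem, by treating the mollified perturbed solution $u_\varepsilon = u^\varepsilon * \rho_\varepsilon \in \mathcal{C}_b^\infty(\bar{Q}_T)$ as a smooth approximate subsolution of the \emph{scheme} and then invoking the scheme comparison principle (Proposition \ref{schemecomparison}) to compare it with $u^\Delta$. By (\ref{est2}) together with (\ref{upperbound_2}) we have $|u - u_\varepsilon|_0 \leq C\varepsilon$, so any bound on $u_\varepsilon - u^\Delta$ will transfer to $u - u^\Delta$ modulo $C\varepsilon$.

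First I would promote $u_\varepsilon$ to an approximate viscosity subsolution of the \emph{variational inequality} (\ref{PDE_1}), in the sense that
\[
\max\bigl\{-\partial_t u_\varepsilon + g(t,x,\partial_x u_\varepsilon, \partial_{xx} u_\varepsilon),\ u_\varepsilon - f(t,x)\bigr\} \leq C\varepsilon \quad \text{in } Q_T.
\]
The PDE part is precisely (\ref{sub2}), which was already derived in the paper from convexity of $g$ in $(p,X)$ together with the stability of viscosity subsolutions under convex combinations and uniform limits. For the obstacle part, since $u^\varepsilon \leq f$ on $\bar{Q}_{T+\varepsilon^2}$ and $\mathrm{supp}\,\rho_\varepsilon \subset (-\varepsilon^2, 0) \times B(0,\varepsilon)$, Assumption \ref{data assumption}(i) gives $[f]_1 \leq M$ and hence
\[
u_\varepsilon(t,x) = \int u^\varepsilon(t-\tau, x-e)\,\rho_\varepsilon(\tau,e)\,de\,d\tau \leq \int f(t-\tau, x-e)\,\rho_\varepsilon \leq f(t,x) + C\varepsilon.
\]
Feeding $u_\varepsilon$ into the consistency estimate (\ref{consistant_error}) and using (\ref{mollifier}), whose worst scaling is $|\partial_{tt}u_\varepsilon|_0, |\partial_{xxxx}u_\varepsilon|_0, |\partial_{xxt}u_\varepsilon|_0 \leq C\varepsilon^{-3}$ (all lower-order terms hidden in $\mathcal{R}(u_\varepsilon)$ enjoy strictly better scaling), transfers the bound to the scheme:
\[
\bar{S}\bigl(\Delta, t, x, u_\varepsilon(t,x), u_\varepsilon(t+\Delta, \cdot)\bigr) \leq C\varepsilon + C\Delta\varepsilon^{-3} \quad \text{in } \bar{Q}_{T-\Delta}.
\]

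Since $\bar{S}(\Delta, t, x, u^\Delta, u^\Delta(t+\Delta,\cdot)) = 0$ by (\ref{semischeme}), Proposition \ref{schemecomparison} gives
\[
u_\varepsilon - u^\Delta \leq \sup_{\bar{Q}_T \setminus \bar{Q}_{T-\Delta}} (u_\varepsilon - u^\Delta)^+ + (T-t+1)\bigl(C\varepsilon + C\Delta\varepsilon^{-3}\bigr).
\]
On the boundary slab, $|u_\varepsilon - u^\Delta| \leq |u_\varepsilon - u|_0 + \sup_{\bar{Q}_T \setminus \bar{Q}_{T-\Delta}}|u - u^\Delta| \leq C\varepsilon + C\sqrt{\Delta}$ by the $L^\infty$ bound above and Lemma \ref{errorsmall}. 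Adding $u - u_\varepsilon \leq C\varepsilon$ yields
\[
u - u^\Delta \leq C\bigl(\varepsilon + \sqrt{\Delta} + \Delta\varepsilon^{-3}\bigr) \quad \text{in } \bar{Q}_T,
\]
and optimising over $\varepsilon$ by balancing the first and third terms, i.e.\ $\varepsilon = \Delta^{1/4}$, delivers the claimed rate $C\Delta^{1/4}$. The delicate point I anticipate is the obstacle inequality $u_\varepsilon \leq f + C\varepsilon$: it matters that the obstacle $f$ itself is \emph{not} perturbed in (\ref{PtbedPDE})---only $\sigma$, $b$, $H$ are---so the bound follows cleanly from the $1/2$-Hölder-in-$t$ and Lipschitz-in-$x$ regularity of $f$. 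Once both halves of the approximate subsolution property are in place, the rest is routine bookkeeping of the shaking-coefficients machinery.
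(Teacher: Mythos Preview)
Your proposal is correct and follows essentially the same approach as the paper's own proof: both establish that $u_\varepsilon$ is an approximate subsolution of the variational inequality (combining (\ref{sub2}) with the obstacle bound $u_\varepsilon \leq f + C\varepsilon$ derived from $u^\varepsilon \leq f$ and the regularity of $f$), feed this into the consistency estimate (\ref{consistant_error}) using the mollifier bounds (\ref{mollifier}), apply the scheme comparison principle (Proposition \ref{schemecomparison}), handle the terminal slab via Lemma \ref{errorsmall} together with $|u-u_\varepsilon|_0 \leq C\varepsilon$, and optimise with $\varepsilon = \Delta^{1/4}$. Your identification of the ``delicate point''---that $f$ is not perturbed in (\ref{PtbedPDE}) so the obstacle bound follows directly from its regularity---matches the paper's reasoning exactly.
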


\begin{proof}
From (\ref{PtbedPDE}), $u^{\varepsilon}-f\le 0$ in $Q_{T+\varepsilon^{2}}$. This yields that for $(t,x)\in\bar{Q}_{T-\Delta}$,
$$u_{\varepsilon}(t,x)-f(t,x)\le \int_{-\varepsilon^2< \tau< 0}\int_{|e|<
\varepsilon}(f(t-\tau,x-e)-f(t,x))\rho_{\varepsilon}(\tau,e)ded\tau\le [f]_{1}\varepsilon \le M\varepsilon.$$
This together with (\ref{sub2}) gives that in $\bar{Q}_{T-\Delta}$,
$$\max\{-\partial_tu_{\varepsilon}+g(t,x,\partial_{x}u_{\varepsilon},\partial_{xx}u_{\varepsilon}),u_{\varepsilon}-f(t,x)\}\le M\varepsilon.$$
We then substitute $u_{\varepsilon}$ into the consistency error estimate
(\ref{consistant_error}) and use the estimate (\ref{mollifier}) to obtain
\begin{align*}\label{estimate4}
&\bar{S}(\Delta,t,x,u_{\varepsilon}(t,x),u_{\varepsilon}(t+\Delta,\cdot))\\
\leq M\varepsilon+C\Delta\left(|\partial_{tt}u_{\varepsilon}|_{0}+\right.& |\partial_{xxxx}u_{\varepsilon}|_0+|\partial_{xxt}u_{\varepsilon}|_{0}+\mathcal{R}(u_{\varepsilon}))
\leq C(\varepsilon+\Delta\varepsilon^{-3}),
\end{align*}
for $(t,x)\in\bar{Q}_{T-\Delta}$.
The comparison principle
in Proposition \ref{schemecomparison} then implies
$$u_{\varepsilon}-u^{\Delta}\leq \sup_{\bar{Q}_{T}\backslash \bar{Q}_{T-\Delta}}(u_{\varepsilon}-u^{\Delta})^{+}+C(T-t+1)(\varepsilon+\Delta\varepsilon^{-3})\ \text{in}\ \bar{Q}_T.$$
Next, using the estimates (\ref{est2}) and (\ref{upperbound_2}), we
obtain $|u-u_{\varepsilon}|\leq C\varepsilon$ and, thus,
\begin{align*}
u-u^{\Delta}&=(u-u_{\varepsilon})+(u_{\varepsilon}-u^{\Delta})\\
&\leq C\varepsilon+\sup_{\bar{Q}_{T}\backslash
\bar{Q}_{T-\Delta}}(u_{\varepsilon}-u^{\Delta})^{+}+C(\varepsilon+\Delta\varepsilon^{-3})\\
&\leq \sup_{\bar{Q}_{T}\backslash
\bar{Q}_{T-\Delta}}(u-u^{\Delta})^{+}+C(\varepsilon+\Delta\varepsilon^{-3})\
\text{in}\ \bar{Q}_T.
\end{align*}
By choosing $\varepsilon=\Delta^{\frac14}$, we further obtain
$$u-u^{\Delta}\leq \sup_{\bar{Q}_{T}\backslash \bar{Q}_{T-\Delta}}(u-u^{\Delta})^{+}+C\Delta^{\frac14}\le C\Delta^{\frac14}\ \ \text{in}\ \bar{Q}_T,$$
where the last inequality follows from the estimate
(\ref{estimate_final_interval}) in Lemma \ref{errorsmall}.
\end{proof}

\subsection{Lower bound for the approximation {error}}

To obtain a lower bound for the approximation error, we cannot follow the above
perturbation procedure to construct approximate smooth supersolutions {of} (\ref{PDE_1}). This is because if we perturb its coefficients in an opposite way to obtain a viscosity supersolution, its convolution
with the mollifier may no longer be a supersolution due to the
convexity of the function $g$ in (\ref{PDE_1}). One way to solve the convexity problem is to interchange the
roles of  equation (\ref{PDE_1})-(\ref{terminal}) and its approximation scheme
(\ref{semischeme}) (as in \cite{HL1} and \cite{Jakobsen}), and perturb the scheme instead of the equation. This approach, however, does not work either, as $u^{\Delta}$ does not in general have enough regularity (Assumption 2.5 in \cite{Jakobsen} fails).

To overcome these difficulties, in this paper we follow the idea of Barles and Jakobsen \cite{BJ} to
build approximate supersolutions which are smooth at the ``right
points'' by introducing an appropriate optimal switching and stopping (with possible stochastic control) systems. Compared to the case in \cite{BJ} where they use standard optimal switching systems to approximate an HJB equation, the systems herein are more difficult. This is because we need to add an extra region (equation) to the standard system in order to approximate a HJB equation with obstacle, i.e. a variational inequality. Thus, here we call the new system a \emph{variant switching system} and give more details in Section \ref{App2}.

To apply the above method to the problem herein, we first observe that, using the convex dual function $L$ introduced in (\ref{L}), we can write the equation (\ref{PDE_1}) as 
\begin{equation}\label{HJBeq}
\max\{-\partial_{t}u+\sup_{q\in\mathbb{R}^{n}}\mathcal{L}^{q}\left(t,x,\partial_{x}u,\partial_{xx}u\right),u-f(t,x)\}=0,
\end{equation}
with
\begin{equation*}
\mathcal{L}^{q}(t,x,p,X):=-\frac{1}{2}\text{Trace}\left(\sigma\sigma^T(t,x)X\right)-(b(t,x)-q)\cdot
p-L(t,x,q).
\end{equation*}
It then follows from Proposition 2.3 (iv) in \cite{Huang} and Proposition \ref{solutionproperty} that the
supremum in (\ref{HJBeq}) can be achieved at some point $q^{*}$ with
$|q^{*}|\leq C$. 
Thus, we rewrite the equation (\ref{HJBeq}) as
\begin{equation}\label{hjbeq111}
\max\{-\partial_{t}u+\sup_{q\in
K}\mathcal{L}^{q}\left(t,x,\partial_{x}u,\partial_{xx}u\right),u-f(t,x)\}=0,
\end{equation}
where $K\subset\mathbb{R}^{n}$ is a compact set. Since  $K$ is separable, it has a countable dense subset, say
$K_{\infty}=\{q_{1},q_{2},q_{3},...\}$ and, in turn, the continuity of
$\mathcal{L}^{q}$ in $q$ implies that
\begin{equation*}
\sup_{q\in K}\mathcal{L}^{q}(t,x,p,X)=\sup_{q\in
K_{\infty}}\mathcal{L}^{q}(t,x,p,X).
\end{equation*}
Therefore, (\ref{HJBeq}) further reduces to
\begin{equation}\label{HJBeq1}
\max\{-\partial_{t}u+\sup_{q\in
K_{\infty}}\mathcal{L}^{q}\left(t,x,\partial_{x}u,\partial_{xx}u\right),u-f(t,x)\}=0.
\end{equation}

Next, for $m\in\mathbb{Z}^{+}$, we consider the following equations to approximate the original equation (\ref{PDE_1})/(\ref{HJBeq1})-(\ref{terminal}),
\begin{align}
\max\{-\partial_{t}u^{m}+\sup_{q\in
{K}_{m}}\mathcal{L}^{q}\left(t,x,\partial_{x}u^{m},\partial_{xx}u^{m}\right),u^{m}-f(t,x)\}&=0 & \text{in}\ Q_T;\label{finiteHJB}\\
u^{m}(T,x)&=U(x) & \text{in}\ \mathbb{R}^n,\label{finiteHJB_terminal}
\end{align}
where ${K}_{m}:=\{q_{1},...,q_{m}\}\subset K_{\infty}$ consisting
the first $m$ points in $K_{\infty}$ and satisfying
$\cup_{m\ge 1}K_m=K_{\infty}$.
It then follows from standard optimal stopping control problem (see \cite{BT, Fleming}) that
(\ref{finiteHJB})-(\ref{finiteHJB_terminal}) admits a unique
viscosity solution $u^m\in\mathcal{C}_b^1(\bar{Q}_T)$, with
$|u^m|_{1}\leq C$ independent of $m$.
Then, Arzela-Ascoli's theorem yields that there exists a subsequence of
$\{u^{m}\}$, denoted as $\{u^{m_{n}}\}$, such that, as $n\rightarrow\infty$.
\begin{equation}\label{lemma1}
u^{m_{n}}\rightarrow u\ \text{locally uniformly\ in}\ \bar{Q}_{T},
\end{equation}
and moreover, since the terminal condition for $u$ and that for $u^{m_{n}}$ ((\ref{terminal}) and (\ref{finiteHJB_terminal}) respectively) coincide, it follows from their regularity result that, for $n\in\mathbb{Z}^{+}$
\begin{equation}\label{lemma2}
|u(t,\cdot)-u^{m_{n}}(t,\cdot)|_{0}\le |u(t,\cdot)-u(T,\cdot)|_{0}+|u^{m_{n}}(t,\cdot)-u^{m_{n}}(T,\cdot)|_{0}\le C\sqrt{T-t}.
\end{equation}

%
For $m\in\mathbb{Z}^{+}$, we then construct a sequence of (local) smooth supersolutions to
approximate the solution $u^m$ of (\ref{finiteHJB})-(\ref{finiteHJB_terminal}). To this end, we consider the following $m+1$ dimensional variant  switching system:
\begin{eqnarray}\label{switching}
\left\{
\begin{array}{l}
 \max\left\{-\partial_tv_i+\mathcal{L}^{q_{i}}(t,x,\partial_{x}v_i,\partial_{xx}v_i),v_i-\mathcal{M}^{k}_{i}v\right\}=0, \ \ \ i\in\mathcal{I}:=\{1,...,m\},\\
 \max\{v_{m+1}-f,v_{m+1}-\mathcal{M}_{m+1}^{k}v\}=0,
\end{array}
\ \ \ \text{in}\ Q_{T},\right.
\end{eqnarray}
with 
\begin{equation}\label{switching_terminal}
v_i(T,x)=U(x), \ \ \ i\in\bar{\mathcal{I}}:=\mathcal{I}\cup\{m+1\} 
\end{equation}
where
$\mathcal{M}^{k}_{i}v:=\min_{j\neq i,\ j\in\bar{\mathcal{I}}}\{v_{j}+k\}$,
for some constant $k>0$ representing the switching cost.

By the results from the next section, we then have the following existence, uniqueness and regularity results for the
solution $v$ of variant switching system
(\ref{switching})-(\ref{switching_terminal}), and a comparison
between $v$ and $u^m$.

\begin{proposition}\label{proposition}
Suppose that Assumption \ref{data assumption} is satisfied. Then, for any $m\in\mathbb{Z}^{+}$,
there exists a unique viscosity solution $v=(v_1,\dots,v_{m+1})$ of (\ref{switching})-(\ref{switching_terminal}), with $|v|_{1}\leq C$.
Moreover, for $k$ small enough,
\begin{equation}\label{optimal_switching_approx}
0\leq v_{i}-u^m\leq Ck^{\frac13}\ \ \text{in}\ \bar{Q}_{T},\ \ i\in\bar{\mathcal{I}}.
\end{equation}
\end{proposition}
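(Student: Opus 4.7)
I would reduce both parts of the proposition to the general results on variant switching systems developed in Section~\ref{App2}. First, the hypotheses of that section must be checked: for every $q_i \in K_m \subset K$, the coefficients $\sigma$, $b - q_i$ and $L(\cdot,\cdot,q_i)$ of $\mathcal{L}^{q_i}$ are bounded and Lipschitz in $(t,x)$ uniformly in $q_i$, because $K$ is compact and $L$ inherits continuity from $H$ through the Legendre transform (\ref{L}) under Assumption~\ref{data assumption}(ii); the obstacle $f$, the terminal datum $U$, and the compatibility $U \le f(T,\cdot)$ are provided by Assumption~\ref{data assumption}(i). The well-posedness and regularity theorem for variant switching systems of Section~\ref{App2} then delivers the unique viscosity solution $v=(v_1,\dots,v_{m+1})$, with $|v|_1 \le C$ for a constant depending only on $T$, $M$ and the diameter of $K$; in particular, $C$ is uniform in $m$ and $k$.

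For the lower bound $v_i - u^m \ge 0$, I would argue by viscosity comparison. Set $\tilde v := \min_{j \in \bar{\mathcal{I}}} v_j$. At any point $(t,x)$, if $j^{*}$ realises the minimum then $v_{j^{*}} \le v_j < v_j + k$ for every $j \ne j^{*}$, so $v_{j^{*}} < \mathcal{M}_{j^{*}}^{k} v$ strictly (using $k>0$); hence the switching obstacle in the $j^{*}$-th equation of (\ref{switching}) is inactive, and the supersolution inequality reduces to $-\partial_t v_{j^{*}} + \mathcal{L}^{q_{j^{*}}}(\cdot,\partial_x v_{j^{*}},\partial_{xx} v_{j^{*}}) \ge 0$ when $j^{*} \in \mathcal{I}$, or $v_{j^{*}} \ge f$ when $j^{*}=m+1$. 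Either way $\tilde v$ satisfies the viscosity supersolution inequality for (\ref{finiteHJB}), and standard comparison for (\ref{finiteHJB})--(\ref{finiteHJB_terminal}) then gives $\tilde v \ge u^m$, whence $v_i \ge u^m$ for every $i \in \bar{\mathcal{I}}$.

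For the upper bound $v_i - u^m \le C k^{1/3}$, I would invoke the quantitative continuous-dependence result of Section~\ref{App2}. Following the template of Barles--Jakobsen, one mollifies $u^m$ at a scale $\varepsilon$; the convexity of $g$ in $p$ and linearity in $X$ (as used in the argument leading to (\ref{sub2})) ensure that $u^m_\varepsilon$ remains a subsolution of each PDE-type equation in (\ref{switching}) up to a mollification error of order $\varepsilon$, while the switching obstacle $\mathcal{M}^k_i$ is violated by at most $k$ when $u^m_\varepsilon$ is duplicated across all modes. A comparison against $v$ then yields an error of the form $\varepsilon + k \varepsilon^{-\alpha}$ with an exponent $\alpha$ arising from the mollification bounds (\ref{mollifier}); optimising gives $\varepsilon \sim k^{1/3}$ and hence the claimed rate.

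The main obstacle is precisely the feature that distinguishes this \emph{variant} switching system from the standard one treated in \cite{BJ}: mode $m+1$ carries no differential operator, only the obstacle constraint $v_{m+1} \le f$. Verifying the consistency and comparison argument for this extra equation requires a separate estimate (essentially based on $u^m \le f$ together with the Lipschitz continuity of $f$), and it is precisely to accommodate this mode that a variant, rather than standard, switching system must be introduced; this is the principal technical content of Section~\ref{App2}, on which the proof of the present proposition rests.
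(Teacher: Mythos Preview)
Your reduction of the well-posedness and regularity to Section~\ref{App2} is correct and matches the paper. For the lower bound $v_i\ge u^m$ you take the dual route to the paper's: instead of verifying that $(u^m,\dots,u^m)$ is a subsolution of (\ref{switching}) and applying comparison for the switching system, you show $\min_{j}v_j$ is a supersolution of (\ref{finiteHJB}) and apply comparison for that equation. Both are standard and equivalent here.

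The gap is in your upper bound argument. Mollifying $u^m$ and duplicating it across modes produces (at best) an approximate \emph{subsolution} of the switching system; comparison with $v$ would then give $u^m_\varepsilon\le v_i+\text{error}$, which is the lower bound again, not $v_i\le u^m+\text{error}$. Moreover, the error form $\varepsilon+k\varepsilon^{-\alpha}$ you quote cannot arise from mollifying $u^m$: there is no $k$ in $u^m$. That $k\varepsilon^{-\alpha}$ term is precisely the signature of the correct argument, which works with $v$, not $u^m$.

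The paper's route for the upper bound is as follows. One first shakes the coefficients of the switching system itself to obtain $v^\varepsilon$ with $|v^\varepsilon_i-v_i|\le C\varepsilon$, then mollifies each $v^\varepsilon_i$ to $v_{i,\varepsilon}$, so that $v_{i,\varepsilon}$ is a genuine smooth subsolution of the single linear equation with operator $\mathcal{L}^{q_i}$. The crucial step is to use the switching constraint, which forces $|v^\varepsilon_i-v^\varepsilon_j|_0\le k$; after mollification this gives $|\partial_t v_{i,\varepsilon}-\partial_t v_{j,\varepsilon}|_0\le Ck\varepsilon^{-2}$ and analogous bounds on space derivatives. These let one transfer the subsolution inequality from index $j$ to index $i$, so that each $v_{i,\varepsilon}$ is a subsolution of $-\partial_t+\sup_{q\in K_m}\mathcal{L}^q$ up to $Ck\varepsilon^{-2}$, and (using $v^\varepsilon_i\le v^\varepsilon_{m+1}+k\le f+k$) also satisfies $v_{i,\varepsilon}-f\le k+M\varepsilon$. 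Thus $v_{i,\varepsilon}$ minus a correction is a subsolution of (\ref{finiteHJB}); comparison with $u^m$ and optimisation $\varepsilon=k^{1/3}$ give the claimed rate. Your proposal is missing exactly this use of $|v_i-v_j|\le k$ to pass from the individual linear operators $\mathcal{L}^{q_i}$ to the supremum $\sup_{q\in K_m}\mathcal{L}^q$.
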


\begin{proof}
The existence, uniqueness and regularity result of the viscosity solution $v$ is given by Theorem \ref{switchingthm} in the next section. To get the estimates (\ref{optimal_switching_approx}), we first check that $w=(u^{m},...,u^{m})$ is a subsolution of (\ref{switching}), then comparison result for the variant switching system (which is implied by Theorem \ref{switchingcd}) yields $u^{m}\leq v_{i}$ for $i\in\bar{\mathcal{I}}$.

To derive the other bound, we follow the same regularization procedure as that in Section \ref{upbd}. For small enough $\varepsilon>0$, we consider the following perturbed system of (\ref{switching}):
\begin{eqnarray}\label{Ptbedswitching}
\left\{
\begin{array}{l}
 \max\displaystyle\left\{-\partial_tv^{\varepsilon}_i+\sup_{(\tau,e)\in\Theta^{\varepsilon}}\mathcal{L}^{q_{i}}(t+\tau,x+e,\partial_{x}v^{\varepsilon}_i,\partial_{xx}v^{\varepsilon}_i),v^{\varepsilon}_i-\mathcal{M}^{k}_{i}v^{\varepsilon}\right\}=0, \ \ i\in\mathcal{I},\\
\displaystyle \max\left\{v^{\varepsilon}_{m+1}-f,v^{\varepsilon}_{m+1}-\mathcal{M}_{m+1}^{k}v^{\varepsilon}\right\}=0,
\end{array}
\ \text{in}\ Q_{T+\varepsilon^2},\right.
\end{eqnarray}
with 
\begin{equation}\label{Ptbedswitching_terminal}
v^{\varepsilon}_i(T+\varepsilon^{2},x)=U(x), \ \ \ i\in\bar{\mathcal{I}} 
\end{equation}
where $\Theta^{\varepsilon}=[-\varepsilon^{2},0]\times \varepsilon B(0,1)$. Note that here we also extend the coefficients $\sigma$, $b$, $f$ and $L$ appropriately.
It then follows from Theorem \ref{switchingthm} and Theorem \ref{switchingcd} that
(\ref{Ptbedswitching})-{(\ref{Ptbedswitching_terminal})} admits a
unique viscosity solution
$v^{\varepsilon}=(v_1^{\varepsilon},\dots,v_{m+1}^{\varepsilon})$ such that for $i\in\bar{\mathcal{I}}$, 
\begin{equation}\label{optimal_switching_approx_111}
|v_{i}^{\varepsilon}|_{1}\leq C \ \text{and} \ \ {|v^{\varepsilon}_i-v_i|}\leq C\varepsilon\ \text{in}\ \bar{Q}_T.
\end{equation}
It then follows from (\ref{Ptbedswitching}) that for any $(\tau,e)\in\Theta^{\varepsilon}$,
$$-\partial_{t}v_{i}^{\varepsilon}+\mathcal{L}^{q_{i}}(t+\tau,x+e,\partial_{x}v_{i}^{\varepsilon},\partial_{xx}v_{i}^{\varepsilon})\leq0 \ \ \ \text{in} \ \ Q_{T+\varepsilon^{2}}, \ \ \  i\in\mathcal{I}.
$$
For each $i\in\mathcal{I}$, define $v_{i,\varepsilon}:=v_{i}^{\varepsilon}*\rho_{\varepsilon}$, where $\rho_{\varepsilon}$ is defined in (\ref{mollifer}). By the same argument as in Section \ref{upbd}, we have
\begin{equation}\label{sub222}
-\partial_{t}v_{i,\varepsilon}+\mathcal{L}^{q_{i}}(t,x,\partial_{x}v_{i,\varepsilon},\partial_{xx}v_{i,\varepsilon})\leq0 \ \ \ \text{in} \ \ Q_{T}, \ \ \  i\in\mathcal{I}.
\end{equation}

On the other hand, it follows again from (\ref{Ptbedswitching}) that for any $i\in\bar{\mathcal{I}}$, $v_{i}^{\varepsilon}\leq\mathcal{M}_{i}^{k}v^{\varepsilon}=\min_{j\neq i,j\in\bar{\mathcal{I}}}v_{j}^{\varepsilon}+k$ in $Q_{T+\varepsilon^{2}}$, which implies that 
$$|v_{i}^{\varepsilon}-v_{j}^{\varepsilon}|_{0}\leq k,\ \ \ i,j\in\bar{\mathcal{I}}.$$
Then by standard properties of mollifiers, we have
$$|\partial_{t}v_{i,\varepsilon}-\partial_{t}v_{j,\varepsilon}|_0\leq Ck\varepsilon^{-2};\ \ \
|D_{x}^nv_{i,\varepsilon}-D_{x}^nv_{j,\varepsilon}|_0\leq Ck\varepsilon^{-n}, \ \ \ n\in\mathbb{N}, \ \ \ i,j\in\mathcal{I},$$
These estimates then yield that
$$|-\partial_{t}v_{j,\varepsilon}+\mathcal{L}^{q_{j}}(t,x,\partial_{x}v_{j,\varepsilon},\partial_{xx}v_{j,\varepsilon})+\partial_{t}v_{i,\varepsilon}-\mathcal{L}^{q_{j}}(t,x,\partial_{x}v_{i,\varepsilon},\partial_{xx}v_{i,\varepsilon})|\leq Ck(\varepsilon^{-2}+\varepsilon^{-1}) \ \ \ \text{in} \ \ Q_{T}, \ \ \  i,j\in\mathcal{I}.
$$
This together with (\ref{sub222}) gives that
$$-\partial_{t}v_{i,\varepsilon}+\mathcal{L}^{q_{j}}(t,x,\partial_{x}v_{i,\varepsilon},\partial_{xx}v_{i,\varepsilon})\leq Ck(\varepsilon^{-2}+\varepsilon^{-1}) \ \ \ \text{in} \ \ Q_{T}, \ \ \  i,j\in\mathcal{I},$$
which means
$$-\partial_{t}v_{i,\varepsilon}+\sup_{q\in {K}_{m}}\mathcal{L}^{q}(t,x,\partial_{x}v_{i,\varepsilon},\partial_{xx}v_{i,\varepsilon})\leq Ck(\varepsilon^{-2}+\varepsilon^{-1}) \ \ \ \text{in} \ \ Q_{T}, \ \ \  i\in\mathcal{I}.$$
Next, since for any $i\in\mathcal{I}$, $v_{i}^{\varepsilon}\le\mathcal{M}_{i}^{k}v^{\varepsilon}\le v^{\varepsilon}_{m+1}+k\le f+k$ in $Q_{T+\varepsilon^{2}}$, we obtain that for $(t,x)\in Q_{T}$,
$$v_{i,\varepsilon}(t,x)-f(t,x)\le k+\int_{-\varepsilon^2< \tau< 0}\int_{|e|<
\varepsilon}(f(t-\tau,x-e)-f(t,x))\rho_{\varepsilon}(\tau,e)ded\tau\le k+ M\varepsilon.$$

From the above two inequalities we can see that for any $i\in\mathcal{I}$, $v_{i,\varepsilon}-(T-t)Ck(\varepsilon^{-2}+\varepsilon^{-1})-k-M\varepsilon$ is a subsolution of (\ref{finiteHJB}) in $Q_{T}$ with terminal value $v_{i,\varepsilon}(T,\cdot)-k-M\varepsilon$. Then by standard continuous dependence result for (\ref{finiteHJB}), we have for $i\in\mathcal{I}$,
\begin{align*}
v_{i,\varepsilon}-u^{m} \leq &\ |(v_{i,\varepsilon}(T,\cdot)-k-M\varepsilon-u^{m}(T,\cdot))^{+}|_{0}+(T-t)Ck(\varepsilon^{-2}+\varepsilon^{-1})+k+M\varepsilon\\
 \le &\
 |v_{i,\varepsilon}(T,\cdot)-v_{i}^{\varepsilon}(T+\varepsilon^{2},\cdot)|_{0}+C(\varepsilon+k\varepsilon^{-2}+k\varepsilon^{-1}+k) \\
 \le &\
 |v_{i,\varepsilon}(T,\cdot)-v_{i}^{\varepsilon}(T+\varepsilon^{2},\cdot)|_{0}+C(\varepsilon+k\varepsilon^{-2}) \ \ \ \text{in} \ \ Q_{T}.
\end{align*}
Hence by the properties of mollifiers and regularity of $v^{\varepsilon}$, we have for $i\in\mathcal{I}$,\begin{align*}
v^{\varepsilon}_{i}-u^{m} = &\ v_{i}^{\varepsilon}-v_{i,\varepsilon}+v_{i,\varepsilon}-u^{m} \\
 \leq &\
 C\varepsilon+|v_{i,\varepsilon}(T,\cdot)-v_{i}^{\varepsilon}(T,\cdot)|_{0}\\
 + &\
 |v_{i}^{\varepsilon}(T,\cdot)-v_{i}^{\varepsilon}(T+\varepsilon^{2},\cdot)|_{0}+C(\varepsilon+k\varepsilon^{-2})\\
 \leq &\ C(\varepsilon+k\varepsilon^{-2}) \ \ \ \text{in} \ \ Q_{T}.
\end{align*}
Furthermore, since $v^{\varepsilon}_{m+1}\le\min_{i\in\mathcal{I}}v^{\varepsilon}_{i}+k$, the above inequality  holds for all $i\in\bar{\mathcal{I}}$. It finally follows from (\ref{optimal_switching_approx_111}) that for $i\in\bar{\mathcal{I}}$,
\begin{align*}
v_{i}-u^{m} = &\ v_{i}-v_{i}^{\varepsilon}+v_{i}^{\varepsilon}-u^{m} \\
 \leq &\
 C\varepsilon+C(\varepsilon+k\varepsilon^{-2})\\
 \leq &\ C(\varepsilon+k\varepsilon^{-2}) \ \ \ \text{in} \ \ Q_{T}.
\end{align*}
We then choose $\varepsilon=k^{\frac13}$ and finish the proof.
\end{proof}\\


%
Next, still following the approach of \cite{BJ}, we construct smooth
approximations of $v_i$. Since when $i\in\mathcal{I}$, in the continuation region of
(\ref{switching}), the solution $v_i$ satisfies the \emph{linear}
equation, namely,
$$-\partial_tv_i+\mathcal{L}^{q_{i}}(t,x,\partial_{x}v_i,\partial_{xx}v_i)=0\ \ \text{in}\ \{(t,x)\in{Q}_T:v_i(t,x)<\mathcal{M}_i^{k}v(t,x)\}, \ \ i\in\mathcal{I},$$
we may perturb its coefficients to obtain a sequence of smooth
supersolutions. This will in turn give a lower bound of the error
$v_{i}-u^{\Delta}$, and thus a lower bound of $u^{m}-u^{\Delta}$ by the estimate (\ref{optimal_switching_approx}). A subtle point herein is how to identify the
continuation region by appropriately choosing the switching cost
$k$. For this, we follow the idea used in Lemma 3.4 of \cite{BJ}.

\begin{proposition}\label{lowbd1}
Suppose that Assumption \ref{data assumption} is satisfied. Let $\Delta\in(0,T)$, $m\in\mathbb{Z}^{+}$, 
$u^{\Delta}\in\mathcal{C}_b(\bar{Q}_{T})$ be the unique solution of the approximation scheme
(\ref{semischeme}) and $u^m\in\mathcal{C}_b^{1}(\bar{Q}_{T})$ be the unique viscosity solution of equation 
(\ref{finiteHJB})-(\ref{finiteHJB_terminal}). Then, 
$$u^{\Delta}-u^{m}\leq \sup_{\bar{Q}_{T}\backslash\bar{Q}_{T-\Delta}}(u^{\Delta}-u^{m})^{+}+C\Delta^{\frac{1}{10}}\ \text{in}\ \bar{Q}_{T}.$$
\end{proposition}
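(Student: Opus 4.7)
The plan is to use the variant switching system (\ref{switching})--(\ref{switching_terminal}) as an auxiliary layer. Proposition \ref{proposition} already gives $0\le v_i-u^m\le Ck^{1/3}$ together with perturbed-and-mollified approximants $v_{i,\varepsilon}:=v_i^{\varepsilon}\ast\rho_{\varepsilon}$ (constructed in its proof via the shaken system (\ref{Ptbedswitching})--(\ref{Ptbedswitching_terminal})) satisfying $|v_{i,\varepsilon}-v_i|_0\le C\varepsilon$ and the standard mollifier estimates $|\partial_t^jD_x^{\alpha}v_{i,\varepsilon}|_0\le C\varepsilon^{1-2j-|\alpha|}$. It therefore suffices to derive a lower bound of the form $\bar S(\Delta,t,x,v_{i^{\ast},\varepsilon}(t,x),v_{i^{\ast},\varepsilon}(t+\Delta,\cdot))\ge -E$ for a judiciously chosen index $i^{\ast}=i^{\ast}(t,x)\in\bar{\mathcal{I}}$ and then to feed this into the scheme comparison principle (Proposition \ref{schemecomparison}).

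The key observation is that, unlike the proof of Proposition \ref{proposition} which extracts subsolution-type estimates, here I want $v_{i,\varepsilon}$ to serve as a \emph{supersolution} of (\ref{PDE_1}) at the selected points. This is available because, on the continuation region $\{v_i^{\varepsilon}<\mathcal{M}_i^{k}v^{\varepsilon}\}$ for indices $i\in\mathcal{I}$, the (perturbed) switching equation is \emph{linear} in $v_i^{\varepsilon}$, so the convex-combination/stability argument that produced (\ref{sub2}) shows that $v_{i,\varepsilon}$ is a classical supersolution of $-\partial_tw+\mathcal{L}^{q_i}(t,x,\partial_xw,\partial_{xx}w)\ge 0$ there. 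Since $g(t,x,p,X)\ge\mathcal{L}^{q_i}(t,x,p,X)$ pointwise by (\ref{hjbeq111}), the same inequality holds for $-\partial_tw+g(t,x,\partial_xw,\partial_{xx}w)\ge 0$, and the consistency estimate (\ref{consistant_error}) then upgrades this to $\bar S(\Delta,t,x,v_{i,\varepsilon}(t,x),v_{i,\varepsilon}(t+\Delta,\cdot))\ge -C\Delta\varepsilon^{-3}$.

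The critical step is the pointwise selection. For each $(t,x)\in\bar Q_{T-\Delta}$, pick $i^{\ast}=i^{\ast}(t,x)\in\bar{\mathcal{I}}$ realising $\min_{j\in\bar{\mathcal{I}}}v_j^{\varepsilon}(t,x)$; by minimality $v_{i^{\ast}}^{\varepsilon}(t,x)+k\le\mathcal{M}_{i^{\ast}}^{k}v^{\varepsilon}(t,x)$, and the Lipschitz estimate $|v^{\varepsilon}|_1\le C$ propagates this strict inequality to a parabolic neighbourhood of $(t,x)$ of radius $\sim k/C$. Provided $\varepsilon\lesssim k$, the mollifier support fits inside this neighbourhood, so when $i^{\ast}\in\mathcal{I}$ the preceding analysis applies and gives $\bar S\ge -C\Delta\varepsilon^{-3}$ at $(t,x)$. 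When $i^{\ast}=m+1$, the strict inequality forces the obstacle side of (\ref{Ptbedswitching}) to be active, i.e.\ $v_{m+1}^{\varepsilon}=f$ throughout the neighbourhood, so $v_{m+1,\varepsilon}\le f+C\varepsilon$ and the obstacle component of $\bar S$ supplies $\bar S\ge v_{m+1,\varepsilon}-f\ge -C\varepsilon$.

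Defining $\tilde v(t,x):=v_{i^{\ast}(t,x),\varepsilon}(t,x)$ and invoking the monotonicity property in Proposition \ref{scheme property}(i) to transfer the pointwise lower bound to the composite function $\tilde v$, I would apply Proposition \ref{schemecomparison} to $(u^{\Delta},\tilde v)$ and combine with $|v_{i,\varepsilon}-v_i|_0\le C\varepsilon$ and $v_i-u^m\le Ck^{1/3}$ to obtain, under the constraint $\varepsilon\lesssim k$,
$$
u^{\Delta}-u^m\le \sup_{\bar Q_T\setminus \bar Q_{T-\Delta}}(u^{\Delta}-u^m)^{+}+C\bigl(\varepsilon+k^{1/3}+\Delta\varepsilon^{-3}\bigr).
$$
Setting $\varepsilon=k=\Delta^{3/10}$ balances $k^{1/3}$ against $\Delta\varepsilon^{-3}$ and yields the claimed $\Delta^{1/10}$ rate. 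The main obstacle is the third step: ensuring the $k$-buffer survives the $\varepsilon$-mollification in space-time and the $\Delta$-step in the scheme. The resulting constraint $\varepsilon\lesssim k$ prevents taking $\varepsilon$ as small as in the proof of Theorem \ref{theorem_error_1} and is precisely what degrades the exponent from $1/4$ to $1/10$.
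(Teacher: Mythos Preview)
There is a genuine gap in your second paragraph. You propose to reuse the mollified functions $v_{i,\varepsilon}=v_i^{\varepsilon}\ast\rho_{\varepsilon}$ built from the $\sup$-shaken system (\ref{Ptbedswitching}) in the proof of Proposition \ref{proposition}, and you claim that because $\mathcal{L}^{q_i}$ is linear in $(p,X)$, the convex-combination argument that produced (\ref{sub2}) now delivers a \emph{super}solution inequality $-\partial_t v_{i,\varepsilon}+\mathcal{L}^{q_i}(t,x,\partial_x v_{i,\varepsilon},\partial_{xx}v_{i,\varepsilon})\ge 0$. It does not. On the continuation region of (\ref{Ptbedswitching}) the equation reads $-\partial_t v_i^{\varepsilon}+\sup_{(\tau,e)\in\Theta^{\varepsilon}}\mathcal{L}^{q_i}(t+\tau,x+e,\cdot,\cdot)=0$, so for each fixed $(\tau,e)$ one only obtains $-\partial_t v_i^{\varepsilon}(t-\tau,x-e)+\mathcal{L}^{q_i}(t,x,\cdot,\cdot)\le 0$, and integrating against $\rho_{\varepsilon}$ preserves the $\le$; this is exactly (\ref{sub222}). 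The linearity of $\mathcal{L}^{q_i}$ in $(p,X)$ lets the integral pass through the operator, but the \emph{sign} of the resulting inequality is dictated by whether the shaking carries a $\sup$ or an $\inf$. To obtain the supersolution inequality needed here, the paper introduces a \emph{new} shaken system (\ref{Ptbedswitching1}) with the perturbation reversed to $\inf_{(\tau,e)\in\Theta^{\varepsilon}}$; only then does the continuation-region equation give $-\partial_t v_i^{\varepsilon}(t-\tau,x-e)+\mathcal{L}^{q_i}(t,x,\cdot,\cdot)\ge 0$ for every $(\tau,e)$, and mollification yields (\ref{supsolution}).

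A secondary issue concerns your composite test function. You select $i^{\ast}(t,x)=\arg\min_j v_j^{\varepsilon}(t,x)$ and set $\tilde v(t,x)=v_{i^{\ast}(t,x),\varepsilon}(t,x)$, then plan to apply Proposition \ref{schemecomparison} to $(u^{\Delta},\tilde v)$. For the monotonicity step you need $\tilde v(t+\Delta,\cdot)\le v_{i^{\ast}(t,x),\varepsilon}(t+\Delta,\cdot)$, but $\tilde v$ at time $t+\Delta$ uses the index $i^{\ast}(t+\Delta,\cdot)$, which need not be dominated by $v_{i^{\ast}(t,x),\varepsilon}$. The paper avoids this by choosing instead $j=\arg\min_i v_{i,\varepsilon}(t,x)$ (minimising the \emph{mollified} functions) and working with $w_{\varepsilon}:=\min_{i\in\bar{\mathcal{I}}}v_{i,\varepsilon}$, for which $w_{\varepsilon}(t,x)=v_{j,\varepsilon}(t,x)$ and $w_{\varepsilon}(t+\Delta,\cdot)\le v_{j,\varepsilon}(t+\Delta,\cdot)$ hold by construction.
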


\begin{proof}
In analogy to (\ref{Ptbedswitching}) but in the opposite direction, for small enough $\varepsilon> 0$, we perturb the coefficients of the system
(\ref{switching})-(\ref{switching_terminal}), and consider the
following variant switching system:
\begin{eqnarray}\label{Ptbedswitching1}
\left\{
\begin{array}{l}
 \max\displaystyle\left\{-\partial_tv^{\varepsilon}_i+\inf_{(\tau,e)\in\Theta^{\varepsilon}}\mathcal{L}^{q_{i}}(t+\tau,x+e,\partial_{x}v^{\varepsilon}_i,\partial_{xx}v^{\varepsilon}_i),v^{\varepsilon}_i-\mathcal{M}^{k}_{i}v^{\varepsilon}\right\}=0, \ \ i\in\mathcal{I},\\
\displaystyle \max\left\{v^{\varepsilon}_{m+1}-f,v^{\varepsilon}_{m+1}-\mathcal{M}_{m+1}^{k}v^{\varepsilon}\right\}=0,
\end{array}
\ \text{in}\ Q_{T+\varepsilon^2},\right.
\end{eqnarray}
with 
\begin{equation}\label{Ptbedswitching1_terminal}
v_i^{\varepsilon}(T+\varepsilon^{2},x)=U(x),\ \ \ i\in\bar{\mathcal{I}},
\end{equation}
It then follows again from Theorem \ref{switchingthm} and Theorem \ref{switchingcd} that
(\ref{Ptbedswitching1})-{(\ref{Ptbedswitching1_terminal})} admits a
unique viscosity solution
$v^{\varepsilon}=(v_1^{\varepsilon},\dots,v_{m+1}^{\varepsilon})$, with
$|v_{i}^{\varepsilon}|_{1}\leq C$ and, moreover, for each $i\in\bar{\mathcal{I}}$,
$${|v^{\varepsilon}_i-v_i|}\leq C\varepsilon\ \text{in}\ \bar{Q}_T.$$
In turn, this and inequality (\ref{optimal_switching_approx}) imply that, for each
$i\in\bar{\mathcal{I}}$,
\begin{equation}\label{Ptbederror}
{|v_{i}^{\varepsilon}- u^{m}|}\leq
{|v_{i}^{\varepsilon}-v_i|}+{|v_{i}-u^m|}\leq
C(\varepsilon+k^{\frac13})\ \text{in}\ \bar{Q}_T.
\end{equation}

Next, we regularize $v^{\varepsilon}$ and define
$v_{i,\varepsilon}:=v_{i}^{\varepsilon}*\rho_{\varepsilon}$ in $\bar{Q}_{T}$ for
$i\in\bar{\mathcal{I}}$, where $\rho_{\varepsilon}$ is the mollifer defined
in (\ref{mollifer}). Then, $v_{i,\varepsilon}\in\mathcal{C}_{b}^{\infty}(\bar{Q}_{T})$,
\begin{equation}\label{convoerror}
|v_{i,\varepsilon}-v_{i}^{\varepsilon}|_{0}\leq C\varepsilon,
\end{equation}
and moreover, for positive integer $m$ and multiindex $n$,
\begin{equation}\label{convoproperty}
{|\partial_{t}^{m}D_{x}^{n}v_{i,\varepsilon}|_0\leq
C\varepsilon^{1-2m-|n|}}.
\end{equation}

Define $w_{\varepsilon}:=\min_{i\in\bar{\mathcal{I}}}v_{i,\varepsilon}$ in $\bar{Q}_{T}$. Then the function $w_{\varepsilon}\in\mathcal{C}_{b}(\bar{Q}_{T})$ and is
smooth in $\bar{Q}_T$ except for finitely many points. Then,
(\ref{Ptbederror}) and (\ref{convoerror}) yield
\begin{equation}\label{errtogether}
{|u^{m}-w_{\varepsilon}|}\leq C(\varepsilon+k^{\frac13})\
\text{in}\ \bar{Q}_T.
\end{equation}

Now we fix any $(t,x)\in\bar{Q}_{T-\Delta}$ and let
$j=\arg\min_{i\in\bar{\mathcal{I}}}v_{i,\varepsilon}(t,x)$. Then we obtain that
$$v_{j,\varepsilon}(t,x)-\mathcal{M}_{j}^{k}v_{\varepsilon}(t,x)=\max_{i\neq j, i\in\bar{\mathcal{I}}}\{v_{j,\varepsilon}(t,x)-v_{i,\varepsilon}(t,x)-k\}\leq
-k.$$ In turn, inequality (\ref{convoerror}) implies that
$$v_{j}^{\varepsilon}(t,x)-\mathcal{M}_{j}^{k}v^{\varepsilon}(t,x)\leq v_{j,\varepsilon}(t,x)-\mathcal{M}_{j}^{k}v_{\varepsilon}(t,x)+C\varepsilon\leq -k+C\varepsilon.$$
Furthermore, since $|v_{j}^{\varepsilon}|_1\leq C$, we also have for any $(\tau,e)\in \Theta^{\varepsilon}$,
\begin{align*}
v_{j}^{\varepsilon}(t-\tau,x-e)-\mathcal{M}_{j}^{k}v^{\varepsilon}(t-\tau,x-e)&\leq
v_{j}^{\varepsilon}(t,x)-\mathcal{M}_{j}^{k}v^{\varepsilon}(t,x)+C(|\tau|^{\frac12}+|e|)\\
&\leq -k+C\varepsilon+2C\varepsilon.
\end{align*}
By choosing $k= 4C\varepsilon$, we then obtain that, for any $(\tau,e)\in
\Theta^{\varepsilon}$,
$$v_{j}^{\varepsilon}(t-\tau,x-e)-\mathcal{M}_{j}^{k}v^{\varepsilon}(t-\tau,x-e)<0.
$$
Therefore, the points $(t-\tau,x-e)$, for $(\tau,e)\in
\Theta^{\varepsilon}$, are in the continuation region of
(\ref{Ptbedswitching1}). Now we consider two cases when $j\in\mathcal{I}$ and when $j=m+1$ separately.

(i) If $j\in\mathcal{I}$, then we have that, for $(\tau,e)\in\Theta^{\varepsilon}$,
$$-\partial_{t}v_{j}^{\varepsilon}(t-\tau,x-e)+\inf_{(\tau',e')\in\Theta^{\varepsilon}}\mathcal{L}^{q_{j}}\left(t-\tau+\tau',x-e+e',\partial_{x}v_{j}^{\varepsilon}(t-\tau,x-e),\partial_{xx}v_{j}^{\varepsilon}(t-\tau,x-e)\right)=0,$$
and therefore,
$$-\partial_{t}v_{j}^{\varepsilon}(t-\tau,x-e)+\mathcal{L}^{q_{j}}\left(t,x,\partial_{x}v_{j}^{\varepsilon}(t-\tau,x-e),\partial_{xx}v_{j}^{\varepsilon}(t-\tau,x-e)\right)\ge0.$$
Using the definition of $v_{j,\varepsilon}$ and that
$\mathcal{L}^{q_{j}}$ is linear in $\partial_{x}v_{j}^{\varepsilon}$
and $\partial_{xx}v_{j}^{\varepsilon}$, we further have
\begin{align}\label{supsolution}
&-\partial_{t}v_{j,\varepsilon}(t,x)+\mathcal{L}^{q_{j}}\left(t,x,\partial_{x}v_{j,\varepsilon}(t,x),\partial_{xx}v_{j,\varepsilon}(t,x)\right)\notag\\
=&\ \int_{-\varepsilon^2< \tau< 0}\int_{|e|< \varepsilon}
\left(-\partial_{t}v_{j}^{\varepsilon}(t-\tau,x-e)+\mathcal{L}^{q_{j}}\left(t,x,\partial_{x}v_{j}^{\varepsilon}(t-\tau,x-e),\partial_{xx}v_{j}^{\varepsilon}(t-\tau,x-e)\right)\right)\rho_{\varepsilon}(\tau,e)ded\tau\notag\\
\ge&\ 0.
\end{align}
Since $w_{\varepsilon}(t,x)=v_{j,\varepsilon}(t,x)$ and $w_{\varepsilon}(t+\Delta,\cdot)\leq v_{j,\varepsilon}(t+\Delta,\cdot)$, we apply Proposition \ref{scheme property} and use the
estimate (\ref{convoproperty}) and (\ref{supsolution}) to obtain that
\begin{align*}
 &\ \bar{S}(\Delta,t,x,w_{\varepsilon}(t,x),w_{\varepsilon}(t+\Delta,\cdot))\\
 \ge &\
 \bar{S}(\Delta,t,x,v_{j,\varepsilon}(t,x),v_{j,\varepsilon}(t+\Delta,\cdot))\\
 \ge &\
 -\partial_{t}v_{j,\varepsilon}(t,x)+g(t,x,\partial_{x}v_{j,\varepsilon}(t,x),\partial_{xx}v_{j,\varepsilon}(t,x))-C\Delta\varepsilon^{-3}\\
 = &\
 -\partial_{t}v_{j,\varepsilon}(t,x)+\sup_{q\in\mathbb{R}^{n}}\mathcal{L}^{q}(t,x,\partial_{x}v_{j,\varepsilon}(t,x),\partial_{xx}v_{j,\varepsilon}(t,x))-C\Delta\varepsilon^{-3}\\
 \ge &\
  -\partial_{t}v_{j,\varepsilon}(t,x)+\mathcal{L}^{q_{j}}(t,x,\partial_{x}v_{j,\varepsilon}(t,x),\partial_{xx}v_{j,\varepsilon}(t,x))-C\Delta\varepsilon^{-3}\\
  \ge &\
  -C\Delta\varepsilon^{-3}.
\end{align*}

(ii) If $j=m+1$, then we have that, for $(\tau,e)\in\Theta^{\varepsilon}$,
$$v^{\varepsilon}_{j}(t-\tau,x-e)-f(t-\tau,x-e)=0.$$
Since $w_{\varepsilon}(t,x)=v_{j,\varepsilon}(t,x)$, we then obtain that
\begin{align*}
 &\ \bar{S}(\Delta,t,x,w_{\varepsilon}(t,x),w_{\varepsilon}(t+\Delta,\cdot)) \ge w_{\varepsilon}(t,x)-f(t,x)\\
 = &\
 \int_{-\varepsilon^2< \tau< 0}\int_{|e|<
\varepsilon}(f(t-\tau,x-e)-f(t,x))\rho_{\varepsilon}(\tau,e)ded\tau\\
  \ge &\
  -M\varepsilon.
\end{align*}

Thus, in any case, we always have
$$\bar{S}(\Delta,t,x,w_{\varepsilon}(t,x),w_{\varepsilon}(t+\Delta,\cdot))\ge -C(\Delta\varepsilon^{-3}+\varepsilon).$$
Note that the right hand side of the above inequality does not depend on $(t,x)$, thus this inequality holds in $\bar{Q}_{T-\Delta}$.
In turn, the comparison principle in Proposition
\ref{schemecomparison} implies that
$$u^{\Delta}-w_{\varepsilon}\leq \sup_{\bar{Q}_{T}\backslash \bar{Q}_{T-\Delta}}(u^{\Delta}-w_{\varepsilon})^{+}+C(T-t+1)(\Delta\varepsilon^{-3}+\varepsilon)\ \text{in}\ \bar{Q}_T.$$
Combining the above inequality with (\ref{errtogether}), we further
get
\begin{align*}
u^{\Delta}-u^{m} = &\ (u^{\Delta}-w_{\varepsilon})+(w_{\varepsilon}-u^{m})\\
 \leq &\
 \sup_{\bar{Q}_{T}\backslash
 \bar{Q}_{T-\Delta}}(u^{\Delta}-w_{\varepsilon})^{+}+C(T-t+1)(\Delta\varepsilon^{-3}+\varepsilon)+
 C(\varepsilon+k^{\frac13})\\
 \leq &\
 \sup_{\bar{Q}_{T}\backslash\bar{Q}_{T-\Delta}}(u^{\Delta}-u^{m})^{+}+C(\varepsilon^{\frac13}+\Delta\varepsilon^{-3})\\
 \leq &\
 \sup_{\bar{Q}_{T}\backslash\bar{Q}_{T-\Delta}}(u^{\Delta}-u^{m})^{+}+C\Delta^{\frac{1}{10}}\
 \text{in}\ \bar{Q}_T,
\end{align*}
where we {used} $k=4C\varepsilon$ in the second to last inequality, and
{choose} $\varepsilon=\Delta^{\frac{3}{10}}$ in the last
inequality.
\end{proof}\\

We are now ready to obtain the lower bound for the  approximation error.
\begin{theorem}\label{theorem_error_2}
Suppose that Assumption \ref{data assumption} is satisfied. Let $\Delta\in(0,T)$,
$u^{\Delta}\in\mathcal{C}_b(\bar{Q}_{T})$ be the unique solution of the approximation scheme
(\ref{semischeme}) and $u\in\mathcal{C}_b^{1}(\bar{Q}_{T})$ be the unique viscosity solution of equation (\ref{PDE_1})-(\ref{terminal}). Then, 
$$u-u^{\Delta}\ge -C\Delta^{\frac{1}{10}}\ \text{in}\ \bar{Q}_T.$$
\end{theorem}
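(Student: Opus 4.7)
The strategy is to sandwich $u^{\Delta}-u$ using the auxiliary finite-control approximations $u^{m_n}$ from (\ref{finiteHJB})--(\ref{finiteHJB_terminal}): I will control $u^{\Delta}-u^{m_n}$ uniformly in $n$ by Proposition \ref{lowbd1}, then pass to the limit along the subsequence via the locally uniform convergence (\ref{lemma1}).

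First, for each fixed $n$, Proposition \ref{lowbd1} gives
$$u^{\Delta}-u^{m_n}\le \sup_{\bar{Q}_{T}\backslash\bar{Q}_{T-\Delta}}(u^{\Delta}-u^{m_n})^{+}+C\Delta^{\frac{1}{10}}\ \text{in}\ \bar{Q}_T,$$
with $C$ independent of $n$ (since $|u^{m}|_{1}\le C$ uniformly in $m$, and the constants produced in Proposition \ref{lowbd1} depend only on this norm and on $M,T$). To control the residual supremum, I split $u^{\Delta}-u^{m_n}=(u^{\Delta}-u)+(u-u^{m_n})$. On $\bar{Q}_{T}\backslash\bar{Q}_{T-\Delta}$, Lemma \ref{errorsmall} yields $|u-u^{\Delta}|\le C\sqrt{\Delta}$, while estimate (\ref{lemma2}) gives $|u-u^{m_n}|\le C\sqrt{T-t}\le C\sqrt{\Delta}$, again with $C$ independent of $n$. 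Adding these and absorbing $\sqrt{\Delta}$ into $\Delta^{1/10}$, I obtain
$$u^{\Delta}-u^{m_n}\le C\Delta^{\frac{1}{10}}\ \text{in}\ \bar{Q}_T,$$
for every $n\in\mathbb{Z}^{+}$, with $C$ independent of $n$.

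To conclude, fix an arbitrary $(t,x)\in\bar{Q}_T$. The locally uniform convergence in (\ref{lemma1}) gives $u^{m_n}(t,x)\to u(t,x)$ as $n\to\infty$, so passing to the limit in the displayed inequality at $(t,x)$ yields $u^{\Delta}(t,x)-u(t,x)\le C\Delta^{1/10}$. Since the bound is independent of $(t,x)$, the inequality holds throughout $\bar{Q}_T$, which is exactly the claim.

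The heart of the argument -- the construction of the variant switching system and the mollified local supersolutions -- has already been executed in Proposition \ref{lowbd1}, so the present step is mostly bookkeeping. The only genuine subtlety is verifying that the boundary correction $\sup_{\bar{Q}_{T}\backslash\bar{Q}_{T-\Delta}}(u^{\Delta}-u^{m_n})^{+}$ is $O(\sqrt{\Delta})$ \emph{uniformly in} $n$; this is exactly what the $n$-independent $1/2$-H\"older estimate (\ref{lemma2}) and Lemma \ref{errorsmall} deliver, which is why the insertion of $u^{m_n}$ is compatible with taking $n\to\infty$ without degrading the rate.
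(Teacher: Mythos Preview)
Your proof is correct and follows essentially the same route as the paper's own argument: both apply Proposition \ref{lowbd1} to $u^{m_n}$, control the boundary supremum via Lemma \ref{errorsmall} and the $n$-uniform estimate (\ref{lemma2}), and then pass to the limit using the locally uniform convergence (\ref{lemma1}). The only cosmetic difference is that the paper carries the remainder $u^{m_n}-u$ explicitly until the final limit, whereas you first close the estimate $u^{\Delta}-u^{m_n}\le C\Delta^{1/10}$ and then let $n\to\infty$ pointwise; the ingredients and logic are identical.
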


\begin{proof}
Applying Proposition \ref{lowbd1} to the sequence $\{u^{m_{n}}\}$, we get for $n\in\mathbb{Z}^{+}$,
\begin{align*}
u^{\Delta}-u = &\ (u^{\Delta}-u^{m_{n}})+(u^{m_{n}}-u)\\
 \leq &\
 \sup_{\bar{Q}_{T}\backslash\bar{Q}_{T-\Delta}}(u^{\Delta}-u^{m_{n}})^{+}+C\Delta^{\frac{1}{10}}+(u^{m_{n}}-u)\\
 \leq &\
 \sup_{\bar{Q}_{T}\backslash
 \bar{Q}_{T-\Delta}}(u^{\Delta}-u)^{+}+
 \sup_{\bar{Q}_{T}\backslash \bar{Q}_{T-\Delta}}(u-u^{m_{n}})^{+}+C\Delta^{\frac{1}{10}}+u^{m_{n}}-u\\
 \leq &\
 C\Delta^{\frac{1}{10}}+u^{m_{n}}-u \ \ \ \text{in} \ \ \bar{Q}_{T},
\end{align*}
where we applied estimate (\ref{estimate_final_interval}) in Lemma
\ref{errorsmall} and estimate (\ref{lemma2}) to the last inequality. The conclusion then follows
by letting $n\to\infty$ and using (\ref{lemma1}).
\end{proof}

\section{Well-posedness, Regularity, and Continuous Dependence for a Variant Switching System}\label{App2}
In this section we construct the well-posedness, regularity, and continuous dependence results for general variant switching systems, which include equation (\ref{switching}), (\ref{Ptbedswitching}) and (\ref{Ptbedswitching1}) as special cases. These results are vitally important in the previous section when obtaining the lower bound.

For $T>0$ and $m\in\mathbb{Z}^{+}$, we consider the following $m+1$ dimensional general variant switching system in $Q_{T}$:
\begin{eqnarray}\label{generalswitching}
\left\{
\begin{array}{ll}
\displaystyle \max\left\{-\partial_tu_i+\sup_{\alpha\in\mathcal{A}}\inf_{\beta\in\mathcal{B}}\mathcal{L}^{\alpha,\beta}_{i}(t,x,\partial_{x}u_i,\partial_{xx}u_i),u_i-\mathcal{M}^{k}_{i}u\right\}=0, \ \ \ i\in\mathcal{I}:=\{1,...,m\},\\
\displaystyle \max\{u_{m+1}-f,u_{m+1}-\mathcal{M}_{m+1}^{k}u\}=0,
\end{array}
\right.
\end{eqnarray}
with terminal condition
\begin{equation}\label{generalswitching_terminal}
u_i(T,x)=U(x),\ \ \ i\in\bar{\mathcal{I}},
\end{equation} 
where
$$\mathcal{L}^{\alpha,\beta}_{i}(t,x,p,X)=-\frac12\text{Trace}\left(\sigma^{\alpha,\beta}_{i}{\sigma^{\alpha,\beta}_{i}}^T(t,x)X\right)-b^{\alpha,\beta}_{i}(t,x)\cdot
p-L^{\alpha,\beta}_{i}(t,x),$$
$\mathcal{A}$ and $\mathcal{B}$ are compact metric spaces. Note that this variant switching system is a general version of (\ref{switching}), (\ref{Ptbedswitching}), and (\ref{Ptbedswitching1}).
We then make the following assumption:	
\begin{assumption}\label{switchingassumption}
There exists a constant $\bar{C}>0$ independent of $\alpha, \beta$, i and t such that for any $\alpha, \beta$, i, and t, 
$$|\sigma^{\alpha,\beta}_{i}(t,\cdot)|_{1},|b^{\alpha,\beta}_{i}(t,\cdot)|_{1},|L^{\alpha,\beta}_{i}(t,\cdot)|_{1},|f|_{1}, |U|_{1}\leq \bar{C}.$$
\end{assumption}

We firstly give the existence of the solution of (\ref{generalswitching})-(\ref{generalswitching_terminal}), then proceed to the continuous dependence on the coefficients, which implies the standard comparison principle and uniqueness, and finally obtain the regularity for the solution based on continuous dependence results.

\begin{proposition}\label{switchingexistence}
Suppose that Assumption \ref{switchingassumption} is satisfied. Then there exists a bounded viscosity solution $u$ of (\ref{generalswitching})-(\ref{generalswitching_terminal}), with $|u|_{0}\le C$ depending only on $T$ and $\bar{C}$.
\end{proposition}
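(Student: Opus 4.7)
The plan is to build the viscosity solution $u$ of (\ref{generalswitching})-(\ref{generalswitching_terminal}) via a monotone iteration combined with the half-relaxed limit stability theory of viscosity solutions. First I would exhibit uniformly bounded sub- and supersolutions. For constants $A \ge \bar{C}$ and $C_0 \ge |U|_0 + |f|_0$, the candidates $\bar{w}_i(t,x) := C_0 + A(T-t)$ and $\underline{w}_i(t,x) := -C_0 - A(T-t)$, taken identical across $i \in \bar{\mathcal{I}}$, are respectively a super- and a subsolution of the full system: the switching obstacle evaluates to $u_i + k$ because all components agree (so $u_i - \mathcal{M}^{k}_i u = -k \le 0$ in the subsolution case, with reversed sign in the supersolution case), the $f$-obstacle in the $(m+1)$-th equation is handled by $C_0 \ge |f|_0$, and the slope $A$ dominates $L^{\alpha,\beta}_i$ by Assumption \ref{switchingassumption}. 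Both meet the terminal datum after adjusting $C_0$.

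Next I would set up the iteration. With $u^0 := \bar{w}$, define $u^{n+1}$ componentwise by decoupling: for $i \in \mathcal{I}$, let $u^{n+1}_i$ be the viscosity solution of the scalar HJBI obstacle problem
\[
\max\Bigl\{-\partial_t u^{n+1}_i + \sup_{\alpha \in \mathcal{A}} \inf_{\beta \in \mathcal{B}} \mathcal{L}^{\alpha,\beta}_i\bigl(t,x,\partial_x u^{n+1}_i,\partial_{xx} u^{n+1}_i\bigr),\ u^{n+1}_i - \mathcal{M}^{k}_i u^n\Bigr\} = 0
\]
with terminal $U$, whose existence is classical by Perron's method together with the comparison principle for scalar obstacle HJBI equations applied to the continuous bounded obstacle $\mathcal{M}^{k}_i u^n$; for $i = m+1$ simply set $u^{n+1}_{m+1}(t,x) := \min\{f(t,x),\,\mathcal{M}^{k}_{m+1} u^n(t,x)\}$ in $Q_T$ with terminal value $U(x)$. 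The map $u^n \mapsto u^{n+1}$ is componentwise monotone because each $\mathcal{M}^{k}_i$ is monotone in its vector argument and the scalar obstacle problems respect comparison. Since $u^0 = \bar{w}$ is a supersolution of the decoupled problem defining $u^1$ (and $\underline{w}$ a subsolution of the same), induction yields $\underline{w} \le u^{n+1} \le u^n \le \bar{w}$ uniformly.

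To pass to the limit I would apply the half-relaxed limit technique. The pointwise limit $u := \lim_n u^n$ exists by monotonicity and inherits the uniform bound $|u|_0 \le C_0 + AT =: C$. Writing $\bar{u} := \limsup^{\ast} u^n$ and $\underline{u} := \liminf_{\ast} u^n$, the standard Barles--Perthame stability theory for viscosity solutions shows that $\bar{u}$ is a subsolution and $\underline{u}$ a supersolution of the full coupled system (\ref{generalswitching})-(\ref{generalswitching_terminal}); the coupling through $\mathcal{M}^{k}_i$ and the $f$-obstacle pass to the limit cleanly because $\min$ is continuous and monotone in its arguments. Finally, the comparison principle for the variant switching system (to be proved in the sequel) forces $\bar{u} \le \underline{u}$, so $\bar{u} = \underline{u} = u \in \mathcal{C}_b(\bar{Q}_T)$ is the desired viscosity solution with $|u|_0 \le C$.

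I expect the main obstacle to be the stability step: one has to verify that the half-relaxed limits are sub/supersolutions of the true coupled system rather than of a frozen-obstacle approximation, and that the non-differential $(m+1)$-th component (which is a pure obstacle/switching equation with no $\partial_t$ or $\partial_{xx}$ term) interacts correctly with the viscosity tests in the limit. Because the switching and obstacle operators couple the components only pointwise through continuous monotone operations, this should reduce to the standard machinery once the test-function argument is carefully adapted to the coupled system.
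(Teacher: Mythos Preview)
Your approach is correct in outline but takes a genuinely different route from the paper. The paper proves existence by writing down an explicit \emph{stochastic optimal switching--stopping--control representation} (\ref{switchrep}) for $u_i$ and then citing the verification machinery of Tang--Yong \cite{Tang} to identify this value function with a viscosity solution; the bound $|u|_0\le C$ is read off directly from the representation by choosing a simple strategy ($\tau_1=T$, $\nu=1$) for the upper bound and using the uniform bound on the running cost for the lower one. Your argument, by contrast, is purely PDE-theoretic: you build sub- and supersolutions, run a monotone decoupled iteration with frozen obstacles, and pass to the limit via half-relaxed limits and the comparison principle (Theorem~\ref{switchingcd}). The stochastic representation has the advantage of being self-contained---it does not forward-reference the comparison result---and it gives an immediate probabilistic interpretation of the solution; your route avoids any probability and would work equally well for systems lacking a clean control formulation, at the cost of relying on Theorem~\ref{switchingcd} (which is logically fine, since that theorem is proved independently of existence) and of leaving a few standard but non-trivial technical points to be filled in: the terminal behaviour of the half-relaxed limits at $t=T$ (you will need barrier functions matching $U$ there, not just the crude $\pm C_0$), and the continuity of the iterates $u^n$ needed for the frozen-obstacle scalar problems to be well posed at each step.
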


\begin{proof}
Here we only give an optimal switching representation which can be proved to be a viscosity solution of (\ref{generalswitching})-(\ref{generalswitching_terminal}) using a similar procedure given in \cite{Tang}, and skip the technical proof. Under the same setting as in \cite{Tang}, the viscosity solution of (\ref{generalswitching})-(\ref{generalswitching_terminal}) can be represented on a filtered probability space
$(\Omega,\mathcal{F},\{\mathcal{F}_t\}_{t\geq 0},\mathbf{P})$ by
\begin{align}\label{switchrep}
u_{i}(t,x)= &\ {\inf_{\theta\in\Theta_{i}[t,T],\alpha\in\mathcal{A}[t,T]}}\inf_{\nu\in\mathcal{T}_{i}^{\theta}{[t,T]}}\sup_{\beta\in\mathcal{B}[t,T]}\mathbf{E}^{t,x}\left[\int_{t}^{\tau_\nu}L_{{\theta_{s}}}^{\alpha_{s},\beta_{s}}\left(s,X_{s}^{\alpha,\beta;\theta}\right)ds+k(\nu-\mathbf{1}_{\{\tau_{\nu}= T\}})\right. \nonumber\\
+ &\ \left. f(\tau_\nu,X_{\tau_\nu}^{\alpha,\beta;\theta})\mathbf{1}_{\{\tau_\nu<T\}}+U\left(X_{T}^{\alpha,\beta;\theta}\right)\mathbf{1}_{\{\tau_\nu= T\}}|\mathcal{F}_t\right],
\end{align}
with the state equation
$$dX_{s}^{\alpha,\beta;\theta}=b_{{\theta_{s}}}^{\alpha_{s},\beta_{s}}(s,X_{s}^{\alpha,\beta;\theta})\mathbf{1}_{\{\theta_{s}\neq m+1\}}{ds}+\sigma_{{\theta_{s}}}^{\alpha_{s},\beta_{s}}(s,X_{s}^{\alpha,\beta;\theta})\mathbf{1}_{\{\theta_{s}\neq m+1\}}dW_{s},$$
where $\mathcal{A}[t,T]$ and $\mathcal{B}[t,T]$ are $\mathcal{A}$ and $\mathcal{B}$-valued progressive-measurable processes respectively, and $\Theta_{i}[t,T]$ is the space of all admissible continuous switching control processes on $[t,T]$ starting from $i$. Specifically, for any admissible switching control process, there is a  pair of corresponding sequence $\{\xi_{n}, \tau_{n}\}_{n\ge 0}$ such that $\{\tau_{n}\}_{n\ge 0}$ is a sequence of nondecreasing stopping time  with
$$t=\tau_{0}\leq\tau_{1}\leq\tau_{2}\leq\cdot\cdot\cdot\leq T, \ \ \ \text{a.s.}$$
and that each $\xi_{n}$ is a $\mathcal{F}_{\tau_{n}}$-measurable random variable valued in $\bar{\mathcal{I}}:=\mathcal{I}\cup \{m+1\}$, with $\xi_{0}=i$ and $\xi_{n}\neq\xi_{n+1}$ a.s. Then an admissible switching control process $\theta$ in $\Theta_{i}[t,T]$ is identified as 
$$\theta_{t}=\Sigma_{n\ge 1}\xi_{n-1}\mathbf{1}_{[\tau_{n-1},\tau_{n})}(t).$$
Finally, for each $\theta\in\Theta_{i}[t,T]$, $\mathcal{T}_{i}^{\theta}[t,T]:=\{\nu:\xi_{\nu}=m+1,\tau_{\nu}<T\}\cup\inf\{\nu:\tau_{\nu}=T\}$, which is valued in $\mathbb{N}$ and represents how many times (or minus 1 if $\tau_{\nu}=T$) the agent will switch among different regions before she stops. 

Loosely speaking, the agent in this optimal switching system has to firstly choose a switching control $\theta$ (i.e. a sequence $\{\xi_{n}, \tau_{n}\}_{n\ge 0}$), and then based on it choose how many times to switch among regions before stopping. However, the stop time must be chosen either such that she is in the last region ($i=m+1$), or to be terminal time $T$. 

The representation given by (\ref{switchrep}) immediately shows that $u\ge C$ since \ref{switchingassumption} holds, for some constant $C$ depending only on $T$ and $\bar{C}$. That $u\le C$ can be obtained by simply choosing $\tau_{1}=T$, $\nu=1$ to get rid of the term involving $k$ inside the expectation.
\end{proof}
\begin{remark}
A standard optimal switching system consists only the first $m$ equations in (\ref{generalswitching}). In this case, for each switching control $\theta$, $\mathcal{T}_{i}^{\theta}[t,T]=\inf\{\nu:\tau_{\nu}=T\}$ consists only one element. Representation (\ref{switchrep}) then reduces to a standard optimal switching representation.
\end{remark}

The additional equation for $u_{m+1}$ in (\ref{generalswitching}) means that there is an additional region in the variant switching system, and more importantly, the agent in this variant optimal switching problem can not stay in this region: once entering into this region (or be in this region from the start), she only has two options, either stop immediately (to pay a cost of $f(t,x)$) or switch immediately to another region. 

We now give the continuous dependence result for the general variant switching system (\ref{generalswitching}).
\begin{theorem}\label{switchingcd}
For any $s\in(0,T]$, let $u\in USC(\bar{Q}_{s})$ be a
bounded from above viscosity subsolution of (\ref{generalswitching}) with
coefficients $\{\sigma_{i}^{\alpha,\beta},b_{i}^{\alpha,\beta},L_{i}^{\alpha,\beta},f\}$, and
$\bar{u}\in LSC(\bar{Q}_{s})$ be a bounded from below viscosity
supersolution of {(\ref{generalswitching})} with coefficients
$\{\bar{\sigma}_{i}^{\alpha,\beta},\bar{b}_{i}^{\alpha,\beta},\bar{L}_{i}^{\alpha,\beta},\bar{f}\}$. Suppose
that Assumption \ref{switchingassumption} holds for both sets of
coefficients, and that $[u(s,\cdot)]_1\leq M$  or
$[\bar{u}(s,\cdot)]_1\leq M$ for some constant $M$, then there
exists a constant $C$ depending only on $M$, $\bar{C}$, and $s$ such that for each $i\in\bar{\mathcal{I}}$,
\begin{equation}\label{swctdependence}
u_{i}-\bar{u}_{i}\le
C\left(\sup_{i}|(u_{i}(s,\cdot)-\bar{u}_{i}(s,\cdot))^{+}|_{0}+\sup_{i,\alpha,\beta}\left\{|\sigma-\bar{\sigma}|_{0}+|b-\bar{b}|_{0}\right\}+\sup_{i,\alpha,\beta}|L-\bar{L}|_{0}+|f-\bar{f}|_{0}\right)\
\text{in}\ \bar{Q}_s.
\end{equation}
\end{theorem}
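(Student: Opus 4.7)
My plan is to follow the classical doubling-of-variables argument of the Crandall–Ishii–Lions user's guide, adapted to switching systems in the spirit of Barles–Jakobsen and Ishii–Koike. I will assume, for contradiction, that (\ref{swctdependence}) fails, so that for some $i_{0} \in \bar{\mathcal{I}}$ and some $(t_0,x_0) \in \bar{Q}_s$ we have $u_{i_0}(t_0,x_0) - \bar u_{i_0}(t_0,x_0)$ strictly exceeding $C$ times the right-hand side; then introduce the auxiliary function
\begin{equation*}
\Phi_{i,\varepsilon,\delta,\mu}(t,x,y) := u_{i}(t,x) - \bar u_{i}(t,y) - \mu(s-t) - \frac{|x-y|^{2}}{\varepsilon^{2}} - \delta\bigl(|x|^{2}+|y|^{2}\bigr),
\end{equation*}
maximized over $\bar{\mathcal{I}} \times \bar{Q}_{s} \times \bar{Q}_{s}$. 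For $\mu$ small, $\varepsilon,\delta$ small, this maximum is attained at some $(i^{*},t^{*},x^{*},y^{*})$ with $t^{*}<s$ (using the Lipschitz regularity of $u(s,\cdot)$ or $\bar u(s,\cdot)$ to dispose of $t^{*}=s$ via the standard $|x^{*}-y^{*}|^{2}/\varepsilon^{2} \to 0$ estimate) and $|x^{*}-y^{*}|/\varepsilon$ bounded.

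The main novelty is to reduce to the ``PDE branch'' for the index $i^{*}$. For the subsolution, both entries of the max are $\leq 0$, so $u_{i^{*}}(t^{*},x^{*}) \leq \mathcal{M}^{k}_{i^{*}}u(t^{*},x^{*})$ and the HJB inequality holds (or, if $i^{*}=m+1$, $u_{m+1}\leq f$). For the supersolution at $(t^{*},y^{*})$ only one entry is guaranteed non-negative, and I must handle three sub-cases:
\begin{enumerate}
\item[(a)] If the switching-obstacle branch is active, $\bar u_{i^{*}}(t^{*},y^{*}) \geq \bar u_{j}(t^{*},y^{*}) + k$ for some $j \neq i^{*}$. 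Combining with $u_{i^{*}}(t^{*},x^{*}) \leq u_{j}(t^{*},x^{*}) + k$ yields $\Phi_{j,\varepsilon,\delta,\mu}(t^{*},x^{*},y^{*}) \geq \Phi_{i^{*},\varepsilon,\delta,\mu}(t^{*},x^{*},y^{*})$, so $j$ also realizes the maximum. Since $\bar{\mathcal{I}}$ has only $m+1$ elements, iterating at most $m+1$ times (being careful that the chain of indices is strictly non-repeating in the Lipschitz-shift sense—exactly the standard switching-system argument) lands me in a case where the PDE branch of the supersolution is active.
\item[(b)] If $i^{*}=m+1$ and the $\bar f$ branch is active, then $\bar u_{m+1}(t^{*},y^{*}) \geq \bar f(t^{*},y^{*})$ while $u_{m+1}(t^{*},x^{*}) \leq f(t^{*},x^{*})$; this, the Lipschitz bound on $f$, and $|x^{*}-y^{*}| \to 0$ give directly that the maximum is controlled by $|f-\bar f|_{0}$ plus $o(1)$, contradicting the contradiction hypothesis.
\item[(c)] Otherwise ($i^{*}\in\mathcal{I}$ and PDE branch active) proceed to the Ishii step.
\end{enumerate}

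In case (c), Ishii's lemma furnishes $(a,X) \in \mathcal{P}^{2,+}u_{i^{*}}(t^{*},x^{*})$ and $(b,Y) \in \mathcal{P}^{2,-}\bar u_{i^{*}}(t^{*},y^{*})$ satisfying $a - b = \mu$ and the usual matrix inequality relating $X,Y$ to $p := 2(x^{*}-y^{*})/\varepsilon^{2}$. Subtracting the sub/super HJB inequalities and taking $\sup_{\alpha} \inf_{\beta}$ carefully, the coefficient-differences contribute
\begin{equation*}
C\left(\sup_{i,\alpha,\beta}\bigl\{|\sigma-\bar\sigma|_{0}+|b-\bar b|_{0}\bigr\}\bigl(1+|p|\bigr) + \sup_{i,\alpha,\beta}|L-\bar L|_{0}\right),
\end{equation*}
while the ``ellipticity'' terms, thanks to Assumption \ref{switchingassumption}, are bounded by $C\bigl(|x^{*}-y^{*}|^{2}/\varepsilon^{2} + \delta(1+|x^{*}|^{2}+|y^{*}|^{2})\bigr)$, which is absorbed by $\mu$ after sending $\delta \to 0$ and then $\varepsilon \to 0$. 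This contradicts $\mu > 0$ and yields (\ref{swctdependence}) after letting $\mu \downarrow 0$.

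The hard part is the switching reduction in step (a): one must verify that the shift from $i^{*}$ to $j$ genuinely improves matters—specifically, that the iteration terminates before cycling—and that the penalty term $|x^{*}-y^{*}|^{2}/\varepsilon^{2}$ is undisturbed by the shift, so that the limit passage in the Ishii step is still valid for the final index. The additional equation at $i=m+1$ makes this more delicate than in the standard switching system of Ishii–Koike, because the shift can lead into the $i=m+1$ branch where the PDE no longer applies and one must peel off to the $f$-obstacle case (b) instead. Assumption \ref{switchingassumption} providing uniform bounds across all indices is what keeps the argument finite and quantitative.
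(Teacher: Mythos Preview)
Your switching reduction in step (a) and the obstacle case (b) are fine; the iteration cannot cycle because along the chain $i^{*}\to j\to\cdots$ the values $\bar u_{i_\ell}(t^{*},y^{*})$ drop by at least $k$ each time, and this is exactly what Lemma~A.2 of \cite{BJ} (which the paper cites) encodes. The gap is in case~(c) and in your handling of the parameters.

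First, after the Ishii matrix inequality the second-order coefficient difference contributes a term of size $C\varepsilon^{-2}|\sigma-\bar\sigma|_{0}^{2}$ (not $|\sigma-\bar\sigma|_{0}(1+|p|)$ as you wrote), and the first-order difference contributes $|b-\bar b|_{0}\,|p|$ with $|p|=2|x^{*}-y^{*}|/\varepsilon^{2}\sim\varepsilon^{-1}$. Both blow up as $\varepsilon\to 0$, so you cannot send $\varepsilon\to 0$; the doubling parameter has to be \emph{optimized}, balancing a term of order $\varepsilon^{-2}|\sigma-\bar\sigma|_{0}^{2}$ against one of order $\varepsilon^{2}$ coming from the terminal Lipschitz bound and from absorbing the linear $|x^{*}-y^{*}|$ remainders. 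Second, and more structurally, your equation has no zeroth-order term, so in case~(c) the subtracted viscosity inequalities only bound $a-b=\mu$, not the quantity $\sup_{i}\sup_{\bar Q_{s}}(u_{i}-\bar u_{i})$ you are after; getting $\mu\le C\cdot(\text{coefficient differences})$ is not a contradiction with $\mu>0$ small. The paper repairs both points at once: it substitutes $v=e^{t}u$, $\bar v=e^{t}\bar u$ to manufacture a $+v_{i}$ term in the equation (so the maximum $m_{\lambda,\delta,\varepsilon}$ appears explicitly in the subtracted inequality), it weights the doubling penalty by $e^{\lambda(s-t)}$ with $\lambda$ chosen large enough that the Lipschitz-regularity contributions $C\delta|x_{0}-y_{0}|^{2}$ are dominated by $-\lambda\delta|x_{0}-y_{0}|^{2}$ coming from $\partial_{t}\phi$, and it then minimizes over the free doubling parameter $\delta$ (your $\varepsilon^{-2}$) using $\min_{r>0}(ar+br^{-1})=2\sqrt{ab}$. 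Without these three ingredients your argument proves comparison but not the quantitative continuous-dependence estimate.
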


\begin{proof}
The proof is mainly based on the proofs of Theorem A.1 in \cite{Jakobsen} and Theorem A.3 in \cite{BJ}, and can be regarded as a combination of these two proofs. We give the details here for reader's convenience.

Fix $0<s\le T$ and in $\bar{Q}_{s}$, we define functions $v(t,x):=e^tu(t,x)$, $\bar{v}(t,x):=e^t\bar{u}(t,x)$ and $g'(t,x):=e^tg(t,x)$ for $g=L, \bar{L}, f, \bar{f}$. It is then follows that $v\in USC(\bar{Q}_{s})$ and $\bar{v}\in LSC(\bar{Q}_{s})$ are bounded above viscosity subsolution and bounded below supersolution of the following system with coefficients $\{\sigma,b, L',f'\}$ and $\{\bar{\sigma},\bar{b}, \bar{L'}, \bar{f'}\}$ respectively:
\begin{eqnarray}
\left\{
\begin{array}{ll}
 \max\left\{-\partial_tv_i+v_{i}+\sup_{\alpha\in\mathcal{A}}\inf_{\beta\in\mathcal{B}}\mathcal{L'}^{\alpha,\beta}_{i}(t,x,\partial_{x}v_i,\partial_{xx}v_i),v_i-\mathcal{M}^{k}_{i}v\right\}=0, \ \ \ i\in\mathcal{I},\\
 \max\{v_{m+1}-f',v_{m+1}-\mathcal{M}_{m+1}^{k}v\}=0,
\end{array}
\right.
\end{eqnarray}
We now use a doubling variables argument to derive a upper bound for $v_{i}-\bar{v}_{i}$ and then will derive for $u_{i}-\bar{u}_{i}$ by using back-substitution.

To continue, we define in $[0,s]\times\mathbb{R}^n\times\mathbb{R}^n$ that $\phi(t,x,y):=e^{\lambda(s-t)}\delta|x-y|^2+\varepsilon(|x|^2+|y|^2)$ and for any $i\in\bar{\mathcal{I}}$, $\psi^{i}(t,x,y):=v_{i}(t,x)-\bar{v}_{i}(t,y)-\phi(t,x,y)$, where $\lambda,\delta,\varepsilon>0$ are positive constants. Then we let $m_{\delta,\varepsilon}^s=\sup_{i,x,y}\psi^{i}(s,x,y)^+$ and $m_{\lambda,\delta,\varepsilon}=\sup_{i,t,x,y}\psi^{i}(t,x,y)-m_{\delta,\varepsilon}^s$. Since $v_{i}$ and $\bar{v}_{i}$ are bounded from above and below respectively, by classical arguments there exists $i_{0}\in\bar{\mathcal{I}}$, $t_0\in[0,s]$ and $x_0,y_0\in\mathbb{R}^n$ depending on $\lambda, \delta$ and $\varepsilon$ such that $\psi^{i_{0}}(t_0,x_0,y_0)=\sup_{i,t,x,y}\psi^{i}(t,x,y)$. Moreover, by Lemma A.2 of \cite{BJ}, $i_{0}$ can be chosen such that $\bar{v}_{i_{0}}(t_{0},y_{0})<\mathcal{M}_{i_{0}}\bar{v}(t_{0},y_{0})$. Loosely speaking this means that we can now ignore the $\bar{v}_{i}-\mathcal{M}_{i}\bar{v}$ parts and proceed as if working with scalar equations. Note that by letting $y=x$, we have for each $i$,
\begin{equation}\label{inequality_1}
m_{\delta,\varepsilon}^s+m_{\lambda,\delta,\varepsilon}=\psi^{i_{0}}(t_0,x_0,y_0)\ge v_{i}(t,x)-\bar{v}_{i}(t,x)-2\varepsilon|x|^2
\end{equation}
for any $(t,x)\in[0,s]\times\mathbb{R}^n$. We now try to derive the upper bound for $m_{\delta,\varepsilon}^s$ and $m_{\lambda,\delta,\varepsilon}$.

Since $[u(s,\cdot)]_1\le M$ or $[\bar{u}(s,\cdot)]_1\le M$, without loss of generality we assume $[\bar{u}(s,\cdot)]_1\le M$, then $[\bar{v}(s,\cdot)]_1\le Me^{s}$ and for any $x,y\in\mathbb{R}^n$
\begin{align*}
\psi^{i}(s,x,y)\le &\ v_{i}(s,x)-\bar{v}_{i}(s,y)-\delta|x-y|^2\\
 \le &\
  \sup_{i}|(v_{i}(s,\cdot)-\bar{v}_{i}(s,\cdot))^{+}|_0+Me^{s}|x-y|-\delta|x-y|^2\\
\le &\
  \sup_{i}|(v_{i}(s,\cdot)-\bar{v}_{i}(s,\cdot))^{+}|_0+M^2e^{2s}\delta^{-1}/4,
\end{align*}
where the last second inequality follows from that $\sup_{r\ge0}(Cr-\delta r^2)= C^2/4\delta$ for any $C,\delta>0$. Thus, we get the upper bound for $m_{\delta,\varepsilon}^s$:
\begin{equation}\label{ub1}
m_{\delta,\varepsilon}^s\le  \sup_{i}|(v_{i}(s,\cdot)-\bar{v}_{i}(s,\cdot))^{+}|_0+C_{1}\delta^{-1},
\end{equation}
where $C_{1}=M^{2}e^{2s}/4.$

On the other hand, we assume that $m_{\lambda,\delta,\varepsilon}>0$ and derive its (positive) upper bound. Of course this upper bound still holds for $m_{\lambda,\delta,\varepsilon}\le0$. Follow this assumption, we have $t_0<s$, since otherwise, $m_{\lambda,\delta,\varepsilon}=\sup_{\mathbb{R}^n\times\mathbb{R}^n}\psi(s,x,y)-m_{\delta,\varepsilon}^s\le0$. Then we can apply the parabolic maximum principle for semicontinuous functions, Theorem 8.3 in \cite{CIL}, to get that there are $a,b\in\mathbb{R}$ and $X,Y\in\mathcal{S}^n$ such that $(a,D_x\phi(t_0,x_0,y_0),X)\in\bar{\mathcal{P}}^{2,+}v_{i_{0}}(t_0,x_0)$ and $(b,-D_y\phi(t_0,x_0,y_0),Y)\in\bar{\mathcal{P}}^{2,-}\bar{v}_{i_{0}}(t_0,y_0)$ with $a-b=\phi_t(t_0,x_0,y_0)$ and the following inequality holds
\begin{equation}\label{ishii}
\left(\begin{array}{cc}
X & 0\\
0 & -Y \end{array} \right) \le 3e^{\lambda(s-t_0)}\delta 
\left(\begin{array}{cc} 
I & -I\\
-I & I \end{array}\right) + 3\varepsilon
\left(\begin{array}{cc} 
I & 0\\
0 & I \end{array}\right).
\end{equation}
We now discuss two different situations where $i_{0}\in\mathcal{I}$ or $i_{0}=m+1$ and derive the upper bounds for $m_{\lambda,\delta,\varepsilon}$ in both situations, then we add them to obtain an upper bound that holds in all situations. 

(i) If $i_{0}\in\mathcal{I}$, then by the definitions of viscosity sub- and supersolutions, as well as the fact that $\bar{v}_{i_{0}}(t_{0},y_{0})<\mathcal{M}_{i_{0}}\bar{v}(t_{0},y_{0})$ and $\sup\inf(\cdot\cdot\cdot)-\sup\inf(\cdot\cdot\cdot)\le\sup\sup(\cdot\cdot\cdot-\cdot\cdot\cdot)$, we have
\begin{align}\label{inequality_2}
0 \le &\ -\lambda e^{\lambda(s-t_{0})}\delta|x_0-y_0|^2-{v}_{i_{0}}(t_0,x_0)+\bar{v}_{i_{0}}(t_0,y_0)\nonumber \\
+ &\
\sup_{\alpha, \beta}\left\{\frac12 \text{tr}[{\sigma_{i_{0}}}\sigma_{i_{0}}^{T}(t_0,x_0)X-\bar{\sigma}_{i_{0}}\bar{\sigma}_{i_{0}}^{T}(t_0,y_0)Y]\right. \nonumber\\
   - &\ 
 \bar{b}_{i_{0}}(t_0,y_0)\cdot(2e^{\lambda(s-t_0)}\delta(x_0-y_0)-2\varepsilon y_0)\nonumber\\
 + &\
 {b}_{i_{0}}(t_0,x_0)\cdot(2e^{\lambda(s-t_0)}\delta(x_0-y_0)+2\varepsilon x_0)\nonumber\\
 - &\
  \left. \bar{L}'_{i_{0}}(t_0,y_{0})+ L'_{i_{0}}(t_0,x_{0})\right\}.
\end{align}
By the inequality (\ref{ishii}) and the fact that $(s+t)^{2}\le 2(s^{2}+t^{2})$ for $s,t\in\mathbb{R}$, we obtain
\begin{align*}
\text{tr}[{\sigma}_{i_{0}}{\sigma}_{i_{0}}^T(t_0,x_0)X-\bar{\sigma}_{i_{0}}\bar{\sigma}_{i_{0}}^T(t_0,y_0)Y] \le &\
 6e^{\lambda(s-t_0)}\delta\left\{|\sigma_{i_{0}}(t_{0},x_{0})-{\bar{\sigma}}_{i_{0}}(t_{0},x_{0})|^{2}+|{\bar{\sigma}}_{i_{0}}(t_{0},x_{0})-{\bar{\sigma}}_{i_{0}}(t_{0},y_{0})|^{2} \right\} \\
 + &\
 3\varepsilon\left\{|{\sigma}_{i_{0}}(t_{0},x_{0})|^{2}+|{\bar{\sigma}}_{i_{0}}(t_{0},y_{0})|^{2} \right\}\\
 \le &\
 6e^{\lambda(s-t_0)}\delta\left\{|\sigma_{i_{0}}-\bar{\sigma}_{i_{0}}|_{0}^{2}+[\bar{\sigma}_{i_{0}}]_{2,1}^{2}|x_{0}-y_{0}|^{2} \right\} \\
 + &\
 3\varepsilon\left\{|\sigma_{i_{0}}|_{0}^{2}+|\bar{\sigma}_{i_{0}}|_{0}^{2} \right\}.
\end{align*}
Furthermore, we have the following estimates
\begin{align*}
(b_{i_{0}}(t_{0},x_{0})-\bar{b}_{i_{0}}(t_{0},y_{0}))\cdot(x_{0}-y_{0}) \le &\ \frac12\left(|b_{i_{0}}(t_{0},x_{0})-\bar{b}_{i_{0}}(t_{0},x_{0})|^{2}+|x_{0}-y_{0}|^{2}\right)\\
+ &\
|\bar{b}_{i_{0}}(t_{0},x_{0})-\bar{b}_{i_{0}}(t_{0},y_{0})||x_0-y_{0}| \\
\le &\
\frac12\left(|b_{i_{0}}-\bar{b}_{i_{0}}|_{0}^{2}+|x_{0}-y_{0}|^{2} \right)+[\bar{b}_{i_{0}}]_{2,1}|x_{0}-y_{0}|^{2},
\end{align*}
\begin{align*}
b_{i_{0}}(t_{0},x_{0})\cdot x_{0} \le &\ |b_{i_{0}}(t_{0},x_{0})||x_{0}| \le ([b_{i_{0}}]_{2,1}|x_{0}|+|b_{i_{0}}(t_{0},0)|)|x_{0}| \\
 \le &\
 \bar{C}(1+|x_{0}|)^{2} \le 2\bar{C}(1+|x_{0}|^{2}),
\end{align*}
and similarly
$$\bar{b}_{i_{0}}(t_{0},y_{0})\cdot y_{0} \le 2\bar{C}(1+|y_{0}|^{2}),$$
$$\bar{v}_{i_{0}}(t_{0},y_{0})-v_{i_{0}}(t_{0},x_{0})\le -\psi(t_{0},x_{0},y_{0})\le -m_{\lambda,\delta,\varepsilon},$$
and
$$ L'_{i_{0}}(t_0,x_{0})-\bar{L}'_{i_{0}}(t_0,y_{0})\le |L'_{i_{0}}-\bar{L}'_{i_{0}}|_{0}+[L'_{i_{0}}]|_{2,1}|x_{0}-y_{0}|.$$
Plugging in all these estimates into inequality (\ref{inequality_2}) yields
\begin{align}\label{ub2}
m_{\lambda,\delta,\varepsilon} \le &\ 3e^{\lambda(s-t_0)}\delta\sup_{i,\alpha,\beta}\left\{|\sigma-\bar{\sigma}|_{0}^{2}+|b-\bar{b}|_{0}^{2} \right\}+e^{s}\sup_{i,\alpha,\beta}|L-\bar{L}|_{0}\nonumber\\
 + &\
  (C_{2}-\lambda)e^{\lambda(s-t_{0})}\delta|x_0-y_0|^2+e^{s}[L_{i_{0}}]|_{2,1}|x_{0}-y_{0}|+
 C_{3}(1+|x_{0}|^{2}+|y_{0}|^{2})\varepsilon.
\end{align}
where $C_{2}=3\bar{C}^{2}+2\bar{C}+1$, $C_{3}=3\bar{C}^{2}+4\bar{C}$ are some positive constants. 

(ii) If $i_{0}=m+1$, then we can easily obtain that
\begin{equation}\label{ub3}
m_{\lambda,\delta,\varepsilon} \le v_{i_{0}}(t_{0},x_{0})-\bar{v}_{i_{0}}(t_{0},y_{0}) \le f'(t_{0},x_{0})-\bar{f}'(t_{0},y_{0})\le e^{s}|f-\bar{f}|_{0}+e^{s}[f]_{2,1}|x_{0}-y_{0}|.
\end{equation}
Combine (\ref{ub2}) and (\ref{ub3}), we get an upper bound for $m_{\lambda,\delta,\varepsilon}$ in all situations. Plug this upper bound as well as (\ref{ub1}) into (\ref{inequality_1}), we have
\begin{align*}
v_{i}(t,x)-\bar{v}_{i}(t,x) \le &\ \sup_{i}|(v_{i}(s,\cdot)-\bar{v}_{i}(s,\cdot))^{+}|_{0}+3e^{\lambda(s-t_0)}\delta\sup_{i,\alpha,\beta}\left\{|\sigma-\bar{\sigma}|_{0}^{2}+|b-\bar{b}|_{0}^{2} \right\}\\
+ &\
 e^{s}\{\sup_{i,\alpha,\beta}|L-\bar{L}|_{0}+|f-\bar{f}|_{0}\}+(C_{2}-\lambda)e^{\lambda(s-t_{0})}\delta|x_0-y_0|^2+2e^{s}\bar{C}|x_{0}-y_{0}|  \\
 + &\ 
C_{1}\delta^{-1}+ C_{3} (1+|x_{0}|^{2}+|y_{0}|^{2})\varepsilon+2\varepsilon |x|^{2}
\end{align*}
Note that this estimate holds for any $\lambda,\delta,\varepsilon>0$. We then try to select appropriate value for them (or take limit) to draw our conclusion. Firstly we may choose $\lambda=C_{2}+1$ and follow again that $\sup_{r\ge0}(Cr-\delta r^2)= C^2/4\delta$ to get rid of the $|x_{0}-y_{0}|$ term. Then, by standard arguments, we know that for any fixed $\lambda$ and $\delta$, $\lim_{\varepsilon\to 0}\varepsilon(|x_{0}|^{2}+|y_{0}|^{2})=0$. By letting $\varepsilon\to 0$, we further get
\begin{align*}
v_{i}(t,x)-\bar{v}_{i}(t,x) \le &\ \sup_{i}|(v_{i}(s,\cdot)-\bar{v}_{i}(s,\cdot))^{+}|_{0}+e^{s}\{\sup_{i,\alpha,\beta}|L-\bar{L}|_{0}+|f-\bar{f}|_{0}\} \\
+ &\
3e^{(C_{2}+1)(s-t_0)}\delta\sup_{i,\alpha,\beta}\left\{|\sigma-\bar{\sigma}|_{0}^{2}+|b-\bar{b}|_{0}^{2}\right\}+(C_{1}+e^{2s}\bar{C}^{2}e^{-(C_{2}+1)(s-t_0)})\delta^{-1}.
 \end{align*}
Note that $\min_{r>0}(ar+br^{-1})=2(ab)^{1/2}$ for any $a,b>0$, we can choose the $\delta$ minimising the right hand side to get
\begin{align*}
v_{i}(t,x)-\bar{v}_{i}(t,x) \le &\  \sup_{i}|(v_{i}(s,\cdot)-\bar{v}_{i}(s,\cdot))^{+}|_{0}+e^{s}\{\sup_{i,\alpha,\beta}|L-\bar{L}|_{0}+|f-\bar{f}|_{0}\} \\
+ &\
 C_{4}\sup_{i,\alpha,\beta}\left\{|\sigma-\bar{\sigma}|_{0}+|b-\bar{b}|_{0}\right\}.
 \end{align*}
where $C_{4}=2(3C_{1}e^{(C_{2}+1)s}+3e^{2s}\bar{C}^{2})^{1/2}$ and we used that $(s^{2}+{t^{2}})^{1/2}\le |s|+|t|$ for any $s,t\in\mathbb{R}$ in the last inequality. Finally, the conclusion follows by back-substituting $v$ and $\bar{v}$ by $u$ and $\bar{u}$.
\end{proof}\\

Finally, by using the above continuous dependence result, we show that the bounded viscosity solution $u$ of (\ref{generalswitching})-(\ref{generalswitching_terminal}) is the unique bounded solution, and moreover, it admits some regularity results.
\begin{theorem}\label{switchingthm}
Suppose that Assumption \ref{switchingassumption} is
satisfied. Then, there exists a unique viscosity solution
$u$ of
(\ref{generalswitching})-(\ref{generalswitching_terminal}), with $|u|_{1}\le C$ depending only on $T$ and $\bar{C}$.
\end{theorem}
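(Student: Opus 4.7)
The plan is to bootstrap from Proposition \ref{switchingexistence}, which already delivers existence and the sup-norm bound $|u|_0 \le C$, and to extract everything else (uniqueness plus the two regularity estimates in $|u|_1$) from the continuous dependence estimate of Theorem \ref{switchingcd}. Uniqueness is immediate: for two bounded viscosity solutions $u, \bar u$ of (\ref{generalswitching})-(\ref{generalswitching_terminal}) with identical coefficients, Theorem \ref{switchingcd} applied at $s = T$ gives $u \le \bar u$, and symmetrically $\bar u \le u$, because the regularity hypothesis $[\bar u(T,\cdot)]_1 = [U]_1 \le \bar C$ holds by Assumption \ref{switchingassumption} and every term on the right-hand side of (\ref{swctdependence}) vanishes.

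For the spatial Lipschitz bound, I would use a translation trick. Fix $h \in \mathbb{R}^n$ and set $\tilde u_i(t,x) := u_i(t,x+h)$. A direct inspection shows that $\tilde u$ is the unique viscosity solution of the same variant switching system with shifted coefficients $\tilde\sigma_i^{\alpha,\beta}(t,x) := \sigma_i^{\alpha,\beta}(t,x+h)$ (and analogously $\tilde b, \tilde L, \tilde f$) and terminal data $U(\cdot+h)$. Applying Theorem \ref{switchingcd} at $s = T$, where $[U(\cdot+h)]_1 = [U]_1 \le \bar C$ supplies the regularity hypothesis, and using Assumption \ref{switchingassumption} to bound each coefficient and terminal difference by $\bar C|h|$, yields $|u(t,x) - u(t,x+h)| \le C|h|$, hence $[u]_{2,1} \le C$.

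For the temporal $1/2$-Hölder estimate I would again use a shift argument, but now in time. For $h > 0$ set $\tilde u(t,x) := u(t+h,x)$ on $\bar Q_{T-h}$; then $\tilde u$ solves the analogous system with time-shifted coefficients and terminal datum $\tilde u(T-h,x) = u(T,x) = U(x)$ (since $U$ is time-independent). Theorem \ref{switchingcd} at $s = T-h$, whose regularity hypothesis is again met by $[U]_1 \le \bar C$, then gives
\begin{equation*}
|u(t,x) - u(t+h,x)| \le C\bigl(|u(T-h,\cdot) - U|_0 + \sqrt{h}\bigr) \quad \text{in } \bar Q_{T-h},
\end{equation*}
where the $\sqrt{h}$ arises from $|\sigma(t,x) - \sigma(t+h,x)| \le [\sigma]_{1,1/2}\sqrt{h} \le \bar C\sqrt{h}$, and analogously for $b, L, f$, since the $|\cdot|_1$ norm incorporates the parabolic $1/2$-Hölder semi-norm in time.

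The main obstacle is then to control the terminal-layer term $|u(T-h,\cdot) - U|_0 \le C\sqrt{h}$, which Theorem \ref{switchingcd} does not deliver directly. I would handle this either (i) probabilistically, from the representation (\ref{switchrep}): standard SDE moment estimates give $\mathbf{E}|X_T^{t,x,\alpha,\beta;\theta} - x| \le C\sqrt{T-t}$ uniformly over admissible switching controls and $\alpha,\beta$, and combining with the Lipschitz bounds on $U, L, f$ yields the desired $|u_i(t,\cdot) - U|_0 \le C\sqrt{T-t}$; or (ii) analytically, by verifying that the barriers $\psi^\pm(t,x) := U(x) \pm C\sqrt{T-t}$ are smooth super/subsolutions of the variant switching system for $C$ large enough, the singular $\partial_t$ term dominating the bounded second-order operator while the switching inequalities against $\mathcal{M}^k$ and, for $i = m+1$, the stopping constraint against $f$ follow from the Lipschitz bounds on $U$ and $f$. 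The interaction of the terminal datum $U$ with the stopping constraint $u_{m+1} \le f$ at $i = m+1$ is the subtle point here, but it is absorbed into the constant $C$. Plugging this terminal-layer estimate into the previous display completes $[u]_{1,1/2} \le C$ and hence $|u|_1 \le C$.
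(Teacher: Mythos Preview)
Your treatment of uniqueness and spatial Lipschitz regularity is correct and matches the paper's approach. The gap is in the temporal argument.

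Your time-shift strategy hinges on the estimate
\[
\sup_{i,\alpha,\beta}\bigl|\sigma_i^{\alpha,\beta}(t,\cdot)-\sigma_i^{\alpha,\beta}(t+h,\cdot)\bigr|_0 \le \bar C\sqrt{h},
\]
and analogously for $b$ and $L$. But Assumption~\ref{switchingassumption} only bounds $|\sigma_i^{\alpha,\beta}(t,\cdot)|_1$, $|b_i^{\alpha,\beta}(t,\cdot)|_1$, $|L_i^{\alpha,\beta}(t,\cdot)|_1$ \emph{for each fixed $t$}; this is the $|\cdot|_1$ norm on $\mathbb{R}^n$, giving spatial Lipschitz regularity uniformly in $t$ but no modulus of continuity in $t$ whatsoever. (Only $f$ carries the full parabolic $|\cdot|_1$ norm on $Q_T$.) Without time regularity of the coefficients, the right-hand side of (\ref{swctdependence}) after the time shift is simply not small, and the argument collapses. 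Your barrier option (ii) for the terminal layer has a separate issue---$U$ is only Lipschitz, so $\partial_{xx}U$ is not available---but even if you repaired that and obtained $|u(T-h,\cdot)-U|_0\le C\sqrt h$ via option (i), the main inequality it is meant to feed into is already broken.

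The paper avoids time regularity of the coefficients entirely. For fixed $0\le t<s\le T$ it mollifies $u_i(s,\cdot)$ in \emph{space only} to get smooth $U_i^\varepsilon$, and uses the barriers $w^\pm_{\varepsilon,i}(\xi,x)=U_i^\varepsilon(x)\pm C_\varepsilon(s-\xi)$ with $C_\varepsilon=C(\varepsilon^{-1}+1)$. Because $|\partial_{xx}U_i^\varepsilon|_0\le C\varepsilon^{-1}$ and the coefficients are merely bounded, $|\mathcal{L}_i^{\alpha,\beta}(\xi,x,\partial_xU_i^\varepsilon,\partial_{xx}U_i^\varepsilon)|\le C_\varepsilon$ uniformly in $\xi$, so $w^\pm$ are sub/supersolutions of the scalar equations without any appeal to time continuity of $\sigma,b,L$. (The obstacle component at $i=m+1$ does use $[f]_{1,1/2}\le\bar C$, which \emph{is} assumed.) Comparison then gives $|u_i^\varepsilon(t,x)-U_i^\varepsilon(x)|\le C_\varepsilon(s-t)+C\varepsilon$, and optimising $\varepsilon=\sqrt{s-t}$ yields the $\tfrac12$-H\"older bound. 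This is the idea your proposal is missing.
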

\begin{proof}
In this proof, we denote by $C$ some constant depending only on $T$ and $\bar{C}$.
Proposition \ref{switchingexistence} gives the existence, and the continuous dependence result (\ref{swctdependence}) implies uniqueness and the $x$-regularity. To proof the $t$-regularity, we follow the idea in Appendix \ref{App1}. 
Fix any $(t,s)$ such that $0\leq t < s \leq T$ and define the functions
$U^{\varepsilon}_{i}:=u_{i}(s,\cdot)*\rho_{\varepsilon}$ in $\mathbb{R}^{n}$ for
$i\in\bar{\mathcal{I}}$ and some $\varepsilon>0$ that shall be decided later, where $\rho_{\varepsilon}$ is the same mollifiers defined in Appendix \ref{App1}. Then similarly we have 
$$|U^{\varepsilon}_{i}-u_{i}(s,\cdot)|_{0}\le C\varepsilon \ \ \text{and} \ \ |D_x^{j}U_{i}^{\varepsilon}|_0\leq C\varepsilon^{1-|j|}.$$ 
Let $u^{\varepsilon}$ be the
unique bounded solution of (\ref{generalswitching}) in $Q_{s}$ with terminal
condition $U^{\varepsilon}$, for
some $\varepsilon>0$ that shall be decided later. The continuous dependence results (\ref{swctdependence}) then implies that for any $i\in\bar{\mathcal{I}}$, 
$$|u_{i}^{\varepsilon}-u_{i}|\le \sup_{i}|U^{\varepsilon}_{i}-u_{i}(s,\cdot)|_{0}\le C\varepsilon \ \ \ \text{in}\ \ \bar{Q}_{s}.$$

Next, for each $i\in\bar{\mathcal{I}}$, define two functions $w^{\pm}_{\varepsilon,i}(t,x):=U_{i}^{\varepsilon}(x)\pm C_{\varepsilon}(s-t)$ in $\bar{Q}_{s}$, for some $C_{\varepsilon}=C(\varepsilon^{-1}+1)$. It then can be easily checked that, for $i\in\mathcal{I}$, the functions  $w^{-}_{\varepsilon,i}$ and $w^{+}_{\varepsilon,i}$ are bounded subsolution and bounded supersolution of
$$-\partial_tw+\sup_{\alpha\in\mathcal{A}}\inf_{\beta\in\mathcal{B}}\mathcal{L}^{\alpha,\beta}_{i}(t,x,\partial_{x}w,\partial_{xx}w)=0, \ \ \text{in} \ Q_{s}.$$
Thus, the function $\bar{v}=(\bar{v}_{1},\bar{v}_{2},...,\bar{v}_{m+1})$ such that $\bar{v}_{i}=w^{+}_{\varepsilon,i}$ for $i\in\mathcal{I}$ and that $\bar{v}_{m+1}=f$, is a bounded supersolution of (\ref{generalswitching}) in $Q_{s}$.  Applying (\ref{swctdependence}) for $u^{\varepsilon}$ and $\bar{v}$ yields that for $i\in\mathcal{I}$,
\begin{align*}
u^{\varepsilon}_{i}(t,x)-w^{+}_{\varepsilon,i}(t,x)
&\ 
 \le 
 \sup_{i}|(U^{\varepsilon}_{i}-\bar{v}_{i}(s,\cdot))^{+}|_{0}=|(U^{\varepsilon}_{m+1}-f(s,\cdot))^{+}|_{0}\\
 &\
 \le
 |(u_{m+1}(s,\cdot)-f(s,\cdot))^{+}|_{0}+C\varepsilon = C\varepsilon,
\end{align*}
which implies that for $i\in\mathcal{I}$,
$$u^{\varepsilon}_{i}(t,x)-U^{\varepsilon}_{i}(x)\le C_{\varepsilon}(s-t)+C\varepsilon.$$
Now we construct a bounded subsolution of (\ref{generalswitching}) in $Q_{s}$ based on $w^{-}_{\varepsilon}$. Note that since $\mathcal{M}_{i}^{k}u$ is concave in $u$, we have for $i\in\bar{\mathcal{I}}$,
$$U^{\varepsilon}_{i}=u_{i}(s,\cdot)*\rho_{\varepsilon}\le \left(\mathcal{M}_{i}^{k}u(s,\cdot)\right)*\rho_{\varepsilon}\le \mathcal{M}_{i}^{k}\left(u(s,\cdot)*\rho_{\varepsilon}\right)=\mathcal{M}_{i}^{k}U^{\varepsilon},$$
and thus $w^{-}_{\varepsilon,i}-\mathcal{M}_{i}^{k}w^{-}_{\varepsilon}\le 0$ in $Q_{s}$. Furthermore, similar to the arguments in Appendix \ref{App1}, we have that in $Q_{s}$, $w^{-}_{\varepsilon,m+1}-f\le C\varepsilon$. Thus, the function $\underline{v}=w^{-}_{\varepsilon}-C\varepsilon$ is a bounded subsolution of (\ref{generalswitching}) in $Q_{s}$.  Applying (\ref{swctdependence}) for  $\underline{v}$ and $u^{\varepsilon}$ yields that for $i\in\mathcal{I}$,
$$w^{-}_{\varepsilon,i}(t,x)-C\varepsilon-u^{\varepsilon}_{i}(t,x)\le 0,$$
which implies that for $i\in\mathcal{I}$,
$$U^{\varepsilon}_{i}(x)-u^{\varepsilon}_{i}(t,x)\le C_{\varepsilon}(s-t)+C\varepsilon.$$

In turn, we obtain that for $i\in\mathcal{I}$,
\begin{align*}
|u_{i}(t,x)-u_{i}(s,x)|\leq &\ |u_{i}(t,x)-u_{i}^{\varepsilon}(t,x)|+|u_{i}^{\varepsilon}(t,x)-U^{\varepsilon}_{i}(x)|+|U^{\varepsilon}_{i}(x)-u_{i}(s,x)|\\
 \leq &\
 2C\varepsilon+C_{\varepsilon}(s-t)+C\varepsilon\\
 \leq &\
 C\left(\varepsilon+\frac{(s-t)}{\varepsilon}+(s-t)\right).
\end{align*}
We choose $\varepsilon=\sqrt{s-t}$ to minimize the right hand
side and obtain that  for $i\in\mathcal{I}$, 
$$|u_{i}(t,x)-u_{i}(s,x)|\leq C\sqrt{s-t}.$$
Note that from (\ref{generalswitching}), we have that $u_{m+1}=\min\{f,u_{1}+k, u_{2}+k,...,u_{m}+k\}$. Therefore, the above inequality also holds for $i=m+1$. This, together with {the boundedness and} the
$x$-regularity, implies that $|u|_{1}\le C$.
\end{proof}

\section{Conclusion}


We propose an approximation scheme for a class of semilinear parabolic variational inequalities whose Hamiltonian is convex and coercive. The proposed scheme is a natural extension of a previous splitting scheme proposed by Liang, Zariphopoulou and the author \cite{Huang} for semilinear parabolic PDEs with the same Hamiltonians. 
We establish the convergence of the scheme and determine the convergence rate by obtaining its error bounds. The bounds are obtained by Krylov's shaking coefficients technique and Barles-Jakobsen's optimal switching approximation, while the switching system we use is a variant switching system. Compared to the results in \cite{Huang}, the convergence rates of our proposed scheme remain the same: the upper convergence rate is $1/4$ (Theorem \ref{theorem_error_1}) and the lower convergence rate is $1/10$ (Theorem \ref{theorem_error_2}).

We mention that the approaches and results herein rely heavily on the Lipschitz continuity of (viscosity) solutions of the equation (\ref{PDE_1}) with respect to the space variable. A possible extension is to consider a case when solutions are $\beta$-H\"{o}lder continuous for some $\beta\in(0,1)$. This is challenging in the sense that in this case, (\ref{PDE_1}) can not be written as (\ref{hjbeq111}) where the control set $K$ is compact, which is a key step when obtaining the lower bound. This will be left as future work. One may also consider another version of variational inequalities where the gradient of the solution is constrained rather than the solution itself. These are naturally related to singular stochastic optimization problems. An early result in this direction but in elliptic case can be bound in \cite{Evans2}. 

%

\begin{appendix}

\section{Proof of Proposition \ref{solutionproperty} and \ref{upperbound_1}}\label{App1}

We note that equation (\ref{PDE_1})-(\ref{terminal}) is a special case (choosing
$\varepsilon=0$) of the equation (\ref{PtbedPDE})-(\ref{Ptbed_terminal}). Therefore, we
omit the proof of Proposition \ref{solutionproperty} and only prove
Proposition \ref{upperbound_1}.

We first show that there exists a bounded solution to (\ref{PtbedPDE})-(\ref{Ptbed_terminal}). To this end, using the convex dual function $L^{\theta}(t,x,q):=\sup_{p\in\mathbb{R}^n}(p\cdot
q-H^{\theta}(t,x,p))$, we rewrite (\ref{PtbedPDE})-(\ref{Ptbed_terminal}) as
\begin{align}
\max\{-\partial_{t}u^{\varepsilon}+{\sup_{\theta\in\Theta^{\varepsilon},q\in\mathbb{R}^{n}}}\mathcal{L}^{\theta,q}\left(t,x,\partial_{x}u^{\varepsilon},\partial_{xx}u^{\varepsilon}\right),u^{\varepsilon}-f(t,x)\}&=0 & \text{in}\ Q_{T+{\varepsilon}^{2}};\label{PtbedPDE11}\\
u^{\varepsilon}(T+{\varepsilon}^{2},x)&=U(x) & \text{in}\ \mathbb{R}^n,\label{Ptbed_terminal11}
\end{align}
where
\begin{equation*}
\mathcal{L}^{\theta,q}(t,x,p,X):=-\frac{1}{2}\text{Trace}\left(\sigma^{\theta}\sigma^{\theta^T}(t,x)X\right)-(b^{\theta}(t,x)-q)\cdot
p-L^{\theta}(t,x,q).
\end{equation*}

We also introduce the stochastic control problem on a filtered probability space
$(\Omega,\mathcal{F},\mathbb{F}=\{\mathcal{F}_t\}_{t\geq 0},\mathbf{P})$:
\begin{align*}
u^{\varepsilon}(t,x)= &\ {\inf_{\theta\in\Theta^{\varepsilon}[t,T+\varepsilon^2],q\in\mathbb{H}^{2}[t,T+\varepsilon^{2}]}}\inf_{\nu\in\mathcal{T}_{[t,T+\varepsilon^{2}]}}\mathbf{E}\left[\int_{t}^{\nu}L^{{\theta_{s}}}\left(s,X_{s}^{t,x;\theta,q},q_{s}\right)ds+f(\nu,X_{\nu}^{t,x;\theta,q})\mathbf{1}_{\{\nu<T+\varepsilon^{2}\}}\right. \\
+ &\ \left. U\left(X_{T+\varepsilon^{2}}^{t,x;\theta,q}\right)\mathbf{1}_{\{\nu=T+\varepsilon^{2}\}}|\mathcal{F}_t\right],
\end{align*}
with the controlled state equation
$$dX_{s}^{t,x;\theta,q}=\left(b^{{\theta_{s}}}(s,X_{s}^{t,x;\theta,q})-q_{s}\right){ds}+\sigma^{{\theta_{s}}}\left(s,X_{s}^{t,x;\theta,q}\right)dW_{s},$$
where $\Theta^{\varepsilon}[t,T+\varepsilon^2]$ is the space of
$\Theta^{\varepsilon}$-valued progressively measurable processes
$(\tau_s,e_s)$, $\mathbb{H}^{2}[t,T+\varepsilon^2]$ is the space of
square-integrable progressively measurable processes $q_s$, for
$s\in[t,T+\varepsilon^2]$, $\mathcal{T}_{[t,T+\varepsilon^{2}]}$ is the collection of all $\mathbb{F}$-stopping times with values in $[t,T+\varepsilon^{2}]$, and $W$ is an d-dimensional Brownian motion with its augmented
filtration $\mathbb{F}$.

Next, we identify its value function with
a bounded viscosity solution to (\ref{PtbedPDE11})-(\ref{Ptbed_terminal11}). For this, we
only need to establish upper and lower bounds for the value function
$u^{\varepsilon}(t,x)$ and, in turn, use standard arguments as in
\cite{Pham} and \cite{Touzi}.


To find an upper {bound} of $u^{\varepsilon}$,  we choose an arbitrary
perturbation parameter process
$\theta\in\Theta^{\varepsilon}[t,T+\varepsilon]$, an arbitrary stopping time $\nu\in\mathcal{T}_{[t,T+\varepsilon^{2}]}$, and a particular control
$\hat{q}$ for $\hat{q}_{s}\equiv 0$. Then, Proposition 2.3 (ii) in \cite{Huang} yields
\begin{align*}
u^{\varepsilon}(t,x)\le &\ \mathbf{E}\left[\int_{t}^{\nu}L^{{\theta_{s}}}(s,X_{s}^{t,x;\theta,\hat{q}},0)ds+f(\nu,X_{\nu}^{t,x;\theta,\hat{q}})\mathbf{1}_{\{\nu<T+\varepsilon^{2}\}}+U(X_{T+\varepsilon^{2}}^{t,x;\theta,\hat{q}})\mathbf{1}_{\{\nu=T+\varepsilon^{2}\}}|\mathcal{F}_t\right] \\
 \le &\
 (T+\varepsilon^{2}-t)|L^{*}(0)|+M \le C.
\end{align*}
For the lower bound, we use again Proposition 2.3 (ii) in \cite{Huang} to obtain that $L_{*}(q)\ge -H^{*}(0) \ge -|H^{*}(0)|$, for any $q\in\mathbb{R}^{n}$. In turn, for any $(\theta,q,\nu)\in\Theta^{\varepsilon}[t,T+\varepsilon^2]\times\mathbb{H}^2[t,T+\varepsilon^2]\times\mathcal{T}_{[t,T+\varepsilon^{2}]}$,
\begin{align*}
 &\ \mathbf{E}\left[\int_{t}^{\nu}L^{{\theta_{s}}}\left(s,X_{s}^{t,x;\theta,q},q_{s}\right)ds+f(\nu,X_{\nu}^{t,x;\theta,q})\mathbf{1}_{\{\nu<T+\varepsilon^{2}\}}+U\left(X_{T+\varepsilon^{2}}^{t,x;\theta,q}\right)\mathbf{1}_{\{\nu=T+\varepsilon^{2}\}}|\mathcal{F}_t\right] \\
 \ge &\
 \mathbf{E}\left[\int_{t}^{\nu}L_{*}(q_{s})ds|\mathcal{F}_t\right]-M \ge -(T+\varepsilon^{2}-t)|H^{*}(0)|-M \ge C,
\end{align*}
and, thus, $u^{\varepsilon}(t,x)\ge C$ and
$|u^{\varepsilon}|_{0}\le C$. 

The uniqueness of the viscosity solution is a direct consequence of the
following continuous dependence result, whose proof follows along
similar arguments as in Theorem A.1 of \cite{Jakobsen}, and is thus
omitted.

\begin{lemma}\label{Ptbedproperty}
For any $s\in(0,{T+\varepsilon^{2}}]$, let $u\in USC(\bar{Q}_{s})$ be a
bounded from above viscosity subsolution of (\ref{PtbedPDE}) with
coefficients $\{\sigma^\theta,b^\theta\,H^{\theta},f\}$, and
$\bar{u}\in LSC(\bar{Q}_{s})$ be a bounded from below viscosity
supersolution of {(\ref{PtbedPDE})} with coefficients
$\{\bar{\sigma}^\theta,\bar{b}^\theta,\bar{H}^{\theta},\bar{f}\}$. Suppose
that Assumption \ref{data assumption} holds for both sets of
coefficients with respective constants $M$ and $\bar{M}$, uniformly in
$\theta\in\Theta^{\varepsilon}$, and that either $u(s,\cdot)\in\mathcal{C}_b^1(\mathbb{R}^n)$ or
$\bar{u}(s,\cdot)\in\mathcal{C}_b^1(\mathbb{R}^n)$. Then, there
exists a constant $C$, depending only on $M$, $\bar{M}$,
$[u(s,\cdot)]_1$ or $[\bar{u}(s,\cdot)]_1$, and $s$, such that in $\bar{Q}_s$,
\begin{equation}\label{ctdependence}
u-\bar{u}\le
C\left(|(u(s,\cdot)-\bar{u}(s,\cdot))^{+}|_{0}+\sup_{\theta\in\Theta^{\varepsilon}}\left\{|\sigma^\theta-\bar{\sigma}^\theta|_{0}+|b^\theta-\bar{b}^\theta|_{0}\right\}+\sup_{\theta\in\Theta^{\varepsilon}}|H^\theta-\bar{H}^\theta|_{0}+|f-\bar{f}|_{0}\right).
\end{equation}
\end{lemma}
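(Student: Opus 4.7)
The plan is to establish \eqref{ctdependence} by the standard doubling-of-variables technique of Crandall--Ishii--Lions, adapted to both the fully nonlinear Hamiltonian and the obstacle structure of \eqref{PtbedPDE}; the argument runs parallel to the proof of Theorem \ref{switchingcd} and to Jakobsen's Theorem A.1. First I would rescale by setting $v := e^{\lambda t}u$, $\bar v := e^{\lambda t}\bar u$, $f' := e^{\lambda t}f$ and $\bar f' := e^{\lambda t}\bar f$, with $\lambda > 0$ to be chosen later. This introduces a $+\lambda v$ zeroth-order term in the rescaled equation, which will be used to absorb the Lipschitz-in-$x$ contribution of $b^\theta$ in the final comparison.

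Second, I would introduce
\begin{equation*}
\psi(t,x,y) := v(t,x) - \bar v(t,y) - e^{\lambda(s-t)}\delta|x-y|^2 - \varepsilon(|x|^2 + |y|^2), \qquad \delta, \varepsilon > 0,
\end{equation*}
which, by boundedness of $v$ from above and $\bar v$ from below, attains its supremum at some $(t_0,x_0,y_0) \in [0,s]\times\mathbb{R}^n\times\mathbb{R}^n$. Writing $\sup\psi = m^s_{\delta,\varepsilon} + m_{\lambda,\delta,\varepsilon}$ as in the proof of Theorem \ref{switchingcd}, the boundary piece $m^s_{\delta,\varepsilon} := \sup_{x,y}\psi(s,x,y)^+$ is bounded (WLOG using $[\bar u(s,\cdot)]_1 \le C$) by $|(v(s,\cdot) - \bar v(s,\cdot))^+|_0 + C/\delta$ via the elementary inequality $\sup_{r \ge 0}(Cr - \delta r^2) = C^2/(4\delta)$. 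If $m_{\lambda,\delta,\varepsilon} > 0$ then necessarily $t_0 < s$, and the parabolic maximum principle for semicontinuous functions yields jets $(a, p_u, X) \in \bar{\mathcal P}^{2,+}v(t_0,x_0)$, $(b, p_{\bar v}, Y) \in \bar{\mathcal P}^{2,-}\bar v(t_0,y_0)$ with $p_u - p_{\bar v} = 2\varepsilon(x_0 + y_0)$, $a - b = \phi_t$, and the standard matrix inequality controlling $X - Y$.

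Third, I would split on the supersolution alternative at $(t_0,y_0)$. If its obstacle is active (Case B, $\bar v(t_0,y_0) \ge \bar f'(t_0,y_0)$), the subsolution obstacle $v \le f'$ yields immediately
\begin{equation*}
v(t_0,x_0) - \bar v(t_0,y_0) \le e^{\lambda s}\left(|f - \bar f|_0 + [\bar f]_{2,1}|x_0 - y_0|\right),
\end{equation*}
and $|x_0 - y_0| \le C/\sqrt{\delta}$ by boundedness of $\psi$. If instead the PDE part of the supersolution is active (Case A), I would subtract the two viscosity inequalities, apply the elementary bound $\sup_\theta(\cdots) - \sup_\theta(\cdots) \le \sup_\theta(\cdots - \cdots)$ to the Hamiltonian--type terms, and use the Crandall--Ishii matrix inequality to bound the trace term. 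This reproduces, mutatis mutandis, the estimate derived in the proof of Theorem \ref{switchingcd}, with $\lambda m_{\lambda,\delta,\varepsilon}$ on the left and a combination of $(C_0 - \lambda)e^{\lambda(s-t_0)}\delta|x_0-y_0|^2$, $C\delta\sup_\theta\{|\sigma^\theta - \bar\sigma^\theta|_0^2 + |b^\theta - \bar b^\theta|_0^2\}$, $\sup_\theta|H^\theta - \bar H^\theta|_0$, $C|x_0-y_0|$ and $o_\varepsilon(1)$ terms on the right.

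The main technical obstacle I anticipate is the Hamiltonian-difference term $H^\theta(t_0,x_0,p_u) - \bar H^\theta(t_0,y_0,p_{\bar v})$, since $H$ is only convex and coercive in $p$, not globally Lipschitz. This is rescued by Assumption \ref{data assumption}(ii), which provides $[H(\cdot,\cdot,p)]_1 \le M$ \emph{uniformly} in $p$, giving $|\bar H^\theta(t_0,x_0,p_u) - \bar H^\theta(t_0,y_0,p_u)| \le \bar M|x_0 - y_0|$ for free, together with the coercivity-induced estimate $|p_u - p_{\bar v}| = 2\varepsilon(|x_0|+|y_0|) = O(\sqrt{\varepsilon})$ (since $\varepsilon(|x_0|^2 + |y_0|^2)$ is bounded by $\sup|\psi|$), so that continuity of $H$ in $p$ on bounded sets delivers $|\bar H^\theta(t_0,y_0,p_u) - \bar H^\theta(t_0,y_0,p_{\bar v})| \to 0$ as $\varepsilon \downarrow 0$. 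To conclude, I would choose $\lambda$ large enough to force $(C_0 - \lambda) < 0$, send $\varepsilon \downarrow 0$, optimize $\delta$ via $\min_{\delta>0}(a\delta + b/\delta) = 2\sqrt{ab}$, and back-substitute $v \mapsto e^{-\lambda t}u$ and $\bar v \mapsto e^{-\lambda t}\bar u$ to recover \eqref{ctdependence}.
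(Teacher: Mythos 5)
Your overall strategy is correct and is essentially the proof the paper intends: the lemma is established by the Crandall--Ishii--Lions doubling argument in the style of Jakobsen's Theorem A.1, with the additional case split on whether the obstacle is active for the supersolution at $(t_0,y_0)$, exactly parallel to the paper's detailed proof of Theorem \ref{switchingcd}. The one place you genuinely diverge is the treatment of the Hamiltonian: the paper passes to the Bellman form (\ref{PtbedPDE11}) via the Legendre transform, so that each $\mathcal{L}^{\theta,q}$ is affine in $p$ and the control $q$ cancels in the drift difference, whereas you work with $H^\theta$ directly. Your route is viable, but the phrase ``continuity of $H$ in $p$ on bounded sets'' must be upgraded to a modulus that is \emph{uniform} in $(t,x,\theta)$; this is in fact available (convexity in $p$ together with the locally bounded envelopes $H_*\le H\le H^*$ from Assumption \ref{data assumption} gives a Lipschitz constant on each ball $B_R$ of $p$-space that is uniform in $(t,x)$), but it does not follow from pointwise continuity alone and should be stated.

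There is one genuine quantitative gap, in your Case B. Estimating $|x_0-y_0|\le C/\sqrt{\delta}$ and inserting this into $[\bar f]_{2,1}|x_0-y_0|$ leaves a term of order $\delta^{-1/2}$ in the final bound. Since Case A contributes $C_\sigma\delta$ with $C_\sigma=\sup_\theta\{|\sigma^\theta-\bar\sigma^\theta|_0^2+|b^\theta-\bar b^\theta|_0^2\}$, and a single $\delta$ must control both cases, minimizing $C_1\delta^{-1}+C_2\delta^{-1/2}+C_\sigma\delta$ over $\delta$ yields only a power $C_\sigma^{1/3}$, strictly worse than the linear dependence on $|\sigma^\theta-\bar\sigma^\theta|_0+|b^\theta-\bar b^\theta|_0$ asserted in (\ref{ctdependence}) --- and the linear rate is precisely what Proposition \ref{upperbound_1} uses to conclude $|u-u^{\varepsilon}|\le C\varepsilon$ under $\varepsilon$-shaking. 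The fix is the device you already apply to the boundary term: do not discard the penalization in Case B, i.e.\ bound $m_{\lambda,\delta,\varepsilon}\le f'(t_0,x_0)-\bar f'(t_0,y_0)-e^{\lambda(s-t_0)}\delta|x_0-y_0|^2\le e^{\lambda s}|f-\bar f|_0+\sup_{r\ge0}\left(Cr-\delta r^2\right)=e^{\lambda s}|f-\bar f|_0+C^2/(4\delta)$, which is exactly how the paper combines (\ref{ub2}) and (\ref{ub3}) in the proof of Theorem \ref{switchingcd}. With that one-line correction, your argument delivers the stated estimate.
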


We now continue the proof of Proposition \ref{upperbound_1}. The $x$-regularity of $u^{\varepsilon}$ then follows easily from
(\ref{ctdependence}) by choosing $u=u^{\varepsilon}$,
$\bar{u}=u^{\varepsilon}(\cdot,\cdot+e)$ {and $s=T+\varepsilon^{2}$}.
To get the $t$-regularity, let $\rho(x)$ be a
$\mathbb{R}_+$-valued smooth function with compact support $B(0,1)$
and mass $1$, and for $\varepsilon>0$, let $\rho_{\varepsilon}(x):=\frac{1}{\varepsilon^{n}}\rho\left(\frac{x}{\varepsilon}\right)$ be a sequence of mollifiers.
Next, fix any $(t,s)$ such that $0\leq t < s \leq T+\varepsilon^{2}$ and let $u_{\varepsilon'}$ be the
unique bounded solution of (\ref{PtbedPDE}) in $Q_{s}$ with terminal
condition
$u_{\varepsilon'}(s,x)=u^{\varepsilon}(s,\cdot)*\rho_{\varepsilon'}(x)$, for
some $\varepsilon'>0$ that shall be decided later. It then follows from (\ref{ctdependence}) that,
in $\bar{Q}_{s}$,
$$u^{\varepsilon}-u_{\varepsilon'}\leq C|(u^{\varepsilon}(s,\cdot)-u_{\varepsilon'}(s,\cdot))^{+}|_{0}\leq C[u^{\varepsilon}(s,\cdot)]_{1}\varepsilon'\le C\varepsilon'.$$
Similarly, we also have $u_{\varepsilon'}-u^{\varepsilon}\leq C\varepsilon'$.

On the other hand, standard properties of mollifiers imply that
$|D_x^{j}u_{\varepsilon'}(s,\cdot)|_0\leq C\varepsilon'^{1-|j|}$. Thus, for any $(\xi,x)\in Q_{s}$, we have
$$|\sup_{\theta\in\Theta^{\varepsilon}}g^{\theta}(\xi,x,\partial_{x}u_{\varepsilon'}(s,x),\partial_{xx}u_{\varepsilon'}(s,x))|\le C(\frac{1}{\varepsilon'}+1)=:C_{\varepsilon'}.$$
Define two functions $w^{\pm}_{\varepsilon'}(t,x):=u_{\varepsilon'}(s,x)\pm C_{\varepsilon'}(s-t)$ in $\bar{Q}_{s}$. It then can be easily checked that the function  $w^{+}_{\varepsilon'}(t,x)$
is a bounded supersolution of
(\ref{PtbedPDE}) in $Q_{s}$. Thus, by (\ref{ctdependence}), we have in
$\bar{Q}_{s}$, $u_{\varepsilon'}-w^{+}_{\varepsilon'}\le 0$,
which impies that
$$u_{\varepsilon'}(t,x)-u_{\varepsilon'}(s,x)\leq C_{\varepsilon'}(s-t).$$
We then construct a bounded subsolution of (\ref{PtbedPDE}) in $Q_{s}$ based on $w^{-}_{\varepsilon'}$.
Since $u_{\varepsilon'}(s,x)\le u^{\varepsilon}(s,x)+C\varepsilon'\le f(s,x)+C\varepsilon'$, we obtain that for any $(\xi,x)\in Q_{s}$,
\begin{align*}
w^{-}_{\varepsilon'}(\xi,x)-f(\xi,x) &\ =  u_{\varepsilon'}(s,x)-C_{\varepsilon'}(s-\xi)-f(\xi,x) \\
 &\ 
 \le f(s,x)-f(\xi,x)-C_{\varepsilon'}(s-\xi)+C\varepsilon' \\
 &\ 
 \le M\sqrt{s-\xi}-C_{\varepsilon'}(s-\xi)+C\varepsilon' \\
 &\
 \le \frac{M^{2}}{4C_{\varepsilon'}}+C\varepsilon' \le C\varepsilon',
\end{align*}
where we used $\sup_{r\ge0}(C_{1}r-C_{2} r^2)= C_{1}^2/4C_{2}$ for any $C_{1},C_{2}>0$, and $\frac{1}{C_{\varepsilon'}}\le C\varepsilon'$. This implies that $w^{-}_{\varepsilon'}-C\varepsilon'$ is a bounded subsolution of (\ref{PtbedPDE}) in $Q_{s}$. By (\ref{ctdependence}), we then have in
$\bar{Q}_{s}$, $w^{-}_{\varepsilon'}-C\varepsilon'-u_{\varepsilon'}\le 0$, which impies that
$$u_{\varepsilon'}(s,x)-u_{\varepsilon'}(t,x)\leq C_{\varepsilon'}(s-t)+C\varepsilon'.$$
In turn, we obtain that
\begin{align*}
|u^{\varepsilon}(t,x)-u^{\varepsilon}(s,x)|\leq &\ |u^{\varepsilon}(t,x)-u_{\varepsilon'}(t,x)|+|u_{\varepsilon'}(t,x)-u_{\varepsilon'}(s,x)|+|u_{\varepsilon'}(s,x)-u^{\varepsilon}(s,x)|\\
 \leq &\
 2C\varepsilon'+C_{\varepsilon'}(s-t)+C\varepsilon'\\
 \leq &\
 C\left(\varepsilon'+\frac{(s-t)}{\varepsilon'}+(s-t)\right).
\end{align*}
We choose $\varepsilon'=\sqrt{s-t}$ to minimize the right hand
side and conclude $|u^{\varepsilon}(t,x)-u^{\varepsilon}(s,x)|\leq
C\sqrt{s-t}$, which, together with {the boundedness and} the
$x$-regularity, implies that $|u^{\varepsilon}|_{1}\le C$.

Finally, note that $u(t,x)$ is also the bounded viscosity solution
of (\ref{PtbedPDE}) when $\sigma^\theta\equiv\sigma$,
$b^\theta\equiv b$ and $H^{\theta}\equiv H$. Applying
(\ref{ctdependence}) once more and the regularity of $\sigma$, $b$,
$H$ and $u^\varepsilon$, we deduce that in $\bar{Q}_{T}$,
\begin{align*}
u^\varepsilon-u \le &\ C\left(|(u^\varepsilon(T,\cdot)-u(T,\cdot))^{+}|_0+\sup_{\theta\in\Theta^{\varepsilon}}\left\{|\sigma^\theta-\sigma|_0+|b^\theta-b|_0\right\}+\sup_{\theta\in\Theta^{\varepsilon}}|H^{\theta}-H|_{0}\right)\\
 \le &\
  C\left(|u^\varepsilon(T,\cdot)-u^\varepsilon(T+\varepsilon^2,\cdot)|_0+\varepsilon\right)\le
  C\varepsilon,
\end{align*}
Similarly, we also have $u-u^\varepsilon\leq C\varepsilon$, and we
conclude.

\end{appendix}

\small


\end{document}